  \def\CC{{\mathbb C}}
  \def\NN{{\mathbb N}}
  \def\ZZ{{\mathbb Z}}
  \def\aA{{\mathcal A}}
  \def\AB{{\mathcal A}^B}
  \def\wAB{\widehat{\mathcal A}}
  \def\bB{{\mathcal B}}
  \def\cC{{\mathcal C}}
  \def\dD{{\mathcal D}}
  \def\eE{{\mathcal E}}
  \def\iI{{\mathcal I}}
  \def\kK{{\mathcal K}}
  \def\lL{{\mathcal L}}
  \def\mM{{\mathcal M}}
  \def\rR{{\mathcal R}}
  \def\veps{{\varepsilon}}
  \def\teps{\tilde{\veps}}
  \def\barr{{\rm bar}}
  \def\Comp{{\rm Comp}}
  \def\cDes{{\rm sDes}}
  \def\Des{{\rm Des}}
  \def\wDes{{\rm wDes}}
  \def\hA{\widehat{A}}
  \def\ch{{\rm ch}}
  \def\finv{{\rm finv}}
  \def\fmaj{{\rm fmaj}}
  \def\QSym{{\rm QSym}}
  \def\inv{{\rm inv}}
  \def\maj{{\rm maj}}
  \def\Neg{{\rm Neg}}
  \def\SYT{{\rm SYT}}
  \def\wei{{\rm wt}}
  \def\sm{\smallsetminus}
  \theoremstyle{plain}
    \newtheorem{theorem}{Theorem}[section]
    \newtheorem{proposition}[theorem]{Proposition}
    \newtheorem{lemma}[theorem]{Lemma}
    \newtheorem{corollary}[theorem]{Corollary}
    \theoremstyle{definition}
    \newtheorem{definition}[theorem]{Definition}
    \newtheorem{example}[theorem]{Example}
    \newtheorem{remark}[theorem]{Remark}
    \newtheorem{problem}[theorem]{Problem}
  \numberwithin{equation}{section}
\begin{document}

\title{Character formulas and descents for the hyperoctahedral group}

  \author[BI]{Ron~M.~Adin}
   \ead{radin@math.biu.ac.il}

  \author[UA]{Christos~A.~Athanasiadis}
  \ead{caath@math.uoa.gr}

  \author[DC]{Sergi~Elizalde\corref{cor1}}
  \ead{sergi.elizalde@dartmouth.edu}

  \author[BI]{Yuval~Roichman}
  \ead{yuvalr@math.biu.ac.il}

 \address[BI]{Department of Mathematics, Bar-Ilan University, Ramat-Gan 52900, Israel}

\address[UA]{Department of Mathematics, National and Kapodistrian University of Athens, Panepistimioupolis, Athens 15784, Greece}
  
\address[DC]{Department of Mathematics, Dartmouth College, Hanover, NH 03755, USA}

\cortext[cor1]{Corresponding author}

\date{submitted: August 16, 2016}
\begin{keyword} symmetric group \sep hyperoctahedral group \sep character \sep
quasisymmetric function \sep Schur-positivity \sep descent set \sep derangement
\end{keyword}

\begin{abstract}
A general setting to study a certain type of formulas, expressing
characters of the symmetric group $\mathfrak{S}_n$ explicitly in
terms of descent sets of combinatorial objects, has been developed
by two of the authors. This theory is further investigated in this
paper and extended to the hyperoctahedral group $B_n$. Key
ingredients are a new formula for the irreducible characters of
$B_n$, the signed quasisymmetric functions introduced by Poirier,
and a new family of matrices of Walsh--Hadamard type. Applications
include formulas for natural $B_n$-actions on coinvariant and
exterior algebras and on the top homology of a certain poset in
terms of the combinatorics of various classes of signed
permutations, as well as a $B_n$-analogue of an equidistribution
theorem of D\'esarm\'enien and Wachs.
\end{abstract}

\maketitle


\section{Introduction}
\label{sec:into}

One of the main goals of combinatorial representation theory, as
described in the survey article~\cite{BR99}, is to provide
explicit formulas which express the values of characters of
interesting representations as weighted enumerations of nice 
combinatorial objects. Perhaps the best known example of such a
formula is the Murnaghan--Nakayama rule~\cite[page~117]{Mac95}
\cite[Section~7.17]{StaEC2} for the irreducible characters of the
symmetric group $\mathfrak{S}_n$.

Several such formulas of a more specific type, expressing
characters of the symmetric groups and their Iwahori--Hecke
algebras in terms of the distribution of the descent set over
classes of permutations, or other combinatorial objects, have been
discovered in the past two decades. The prototypical example is
Roichman's rule~\cite{Roi97} for the irreducible characters of
$\mathfrak{S}_n$ (and the corresponding Hecke algebra), where the
enumerated objects are either Knuth classes of permutations, or
standard Young tableaux. Other notable examples include the
character of the Gelfand model (i.e., the  multiplicity-free sum
of all irreducible characters)~\cite{APR08}, characters of
homogenous components of the coinvariant algebra~\cite{APR01}, Lie
characters~\cite{GR93}, characters of Specht modules of zigzag
shapes~\cite{Ge84}, characters induced from a lower ranked
exterior algebra~\cite{ER14} and $k$-root
enumerators~\cite{Roi14}, determined by the distribution of the
descent set over involutions, elements of fixed Coxeter length,
conjugacy classes, inverse descent classes, arc permutations and
$k$-roots of the identity permutation, respectively. The formulas
in question evaluate these characters by $\{-1, 0, 1\}$-weighted
enumerations of the corresponding classes of permutations, where
exactly the same weight function appears in all summations. Some
new examples are given in Sections~\ref{sec:conj}--\ref{sec:arc}
of this paper.

An abstract framework for this phenomenon, which captures all
aforementioned examples, was proposed in~\cite{AR15}. Characters
which are expressed by such formulas (see
Definition~\ref{def:fineA}) are called \emph{fine characters}, and
classes which carry them are called \emph{fine sets}. It was shown
in~\cite{AR15} that the equality of two fine characters is
equivalent to the equidistribution of the descent set over the
corresponding fine sets. This implies, in particular, the
equivalence of classical theorems of Lusztig--Stanley in invariant
theory~\cite{St79} and Foata--Sch\"utzenberger in permutation
statistics~\cite{FS78}. Furthermore, it was shown in~\cite{AR15}
that fine sets can be characterized by the symmetry and
Schur-positivity of the associated quasisymmetric functions. For
the latter, the reader is referred to Gessel--Reutenauer's seminal
paper~\cite{GR93}.

This paper investigates this setting further and provides a
nontrivial extension to the hyperoctahedral group $B_n$.
Section~\ref{subsec:main_A} gives a more explicit version of the
main result of~\cite{AR15}. This version (Theorem~\ref{thm:mainA})
states that a given $\mathfrak{S}_n$-character is carried by a
fine set $\bB$ if and only if its Frobenius characteristic is
equal to the quasisymmetric generating function of the descent set
over $\bB$. To extend this result to the group $B_n$, suitable
signed analogues of the concepts of fine characters and fine sets
have to be introduced (see Definition~\ref{def:fineB}) and
suitable signed analogues of the fundamental quasisymmetric
functions, namely those defined and studied by
Poirier~\cite{Poi98}, are employed. For the former task, a signed
analogue of the concept of descent set is used (see
Section~\ref{subsec:perm}) and an analogue of Roichman's rule for
the irreducible characters of $B_n$ (Theorem~\ref{thm:roiB}) is
proven. The weights involved in this rule are used to define type
$B$ fine sets as those sets whose weighted enumeration determines
a non-virtual $B_n$-character. The main result
(Theorem~\ref{thm:main}) of this paper states that a given
$B_n$-character is carried by a fine set $\bB$ if and only if its
Frobenius characteristic is equal to the quasisymmetric generating
function of the signed descent set over $\bB$.

The proof of one direction of Theorem~\ref{thm:main} amounts to
the invertibility of a new family of matrices of Walsh--Hadamard
type, introduced in Section~\ref{subsec:weight_matrix}. This is
shown by a tricky computation of their determinants.
Equidistribution and Schur-positivity phenomena, similar to those
in~\cite{AR15}, are easily derived from Theorem~\ref{thm:main}
(see Corollary~\ref{cor:main}).

Sections~\ref{sec:Knuth}--\ref{sec:arc} show that nearly all
examples of fine $\mathfrak{S}_n$-characters mentioned earlier
have natural $B_n$-analogues. These include the irreducible
characters of $B_n$, the Gelfand model, the characters of the
natural $B_n$-action on the homogeneous components of the
coinvariant algebra of type $B$, signed analogues of the Lie
characters, characters induced from exterior algebras and $k$-root
enumerators, with corresponding fine sets consisting of elements
of Knuth classes of type $B_n$, involutions, signed permutations
of fixed flag-inversion number or flag-major index, conjugacy
classes in $B_n$, signed analogues of arc permutations and
$k$-roots of the identity signed permutation, respectively.
Theorem~\ref{thm:main} implies explicit formulas for these
characters in terms of the distribution of the signed descent set
over the corresponding fine sets. Section~\ref{subsec:derange} is
concerned with the character of the $\mathfrak{S}_n$-action on the
top homology of the poset of injective words, studied by Reiner
and Webb~\cite{ReiW04}, and with its natural $B_n$-analogue. This
allows us to interpret a theorem of D\'esarm\'enien and
Wachs~\cite{DeWa88, DeWa93} on the equidistribution of the descent
set over derangements and desarrangements in $\mathfrak{S}_n$ in
the language of $\mathfrak{S}_n$-fine sets, and to derive a
$B_n$-analogue. The latter task provided much of the motivation
behind this paper.

The structure of this paper is as follows. Section~\ref{sec:back}
reviews background material from combinatorial representation
theory and sets up the notation on permutations, partitions, Young
tableaux, compositions, descent sets and their signed analogues
which is necessary to define fine sets. The main definitions and
results are stated in Section~\ref{sec:main}, which also derives
the aforementioned result on $\mathfrak{S}_n$-fine sets
(Theorem~\ref{thm:mainA}) from the main result of~\cite{AR15}.
Section~\ref{sec:proof} states and proves the $B_n$-analogue of
Roichman's rule (Theorem~\ref{thm:roiB}) and uses this result,
together with methods from linear algebra, to prove the main
result (Theorem~\ref{thm:main}) on $B_n$-fine sets. As already
mentioned, Sections~\ref{sec:Knuth}--\ref{sec:arc} discuss
applications and examples. Section~\ref{sec:rem} concludes with
remarks and open problems.

\section{Background and notation}
\label{sec:back}

This section fixes notation and briefly reviews background
material regarding the combinatorics of (signed) permutations and
Young (bi)tableaux, the representation theory of the symmetric and
hyperoctahedral groups and the theory of symmetric and
quasisymmetric functions which will be needed in the sequel. More
information on these topics and any undefined terminology can be
found in~\cite[Chapter~I]{Mac95} \cite[Chapter~7]{StaEC2}
\cite{SW85} \cite{Ste89}.

Throughout this paper, $x = (x_1, x_2, \ldots)$ and $y = (y_1,
y_2,\dots)$ will be sequences of pairwise commuting
indeterminates. For positive integers $n$ we set $[n] := \{1,
2,\dots,n\}$ and $\Omega_n := \{1, 2,\dots,n\} \cup \{\bar{1},
\bar{2},\dots,\bar{n}\}$. The absolute value $|a|$ of $a \in
\Omega_n$ is the element of $[n]$ obtained from $a$ by simply
forgetting the bar, if present. We will consider the set $[n]$
totally ordered by its natural order $<$ and $\Omega_n$ totally
ordered by the order
  \begin{equation} \label{eq:total}
    \bar{1} <_r \bar{2} <_r \cdots <_r \bar{n} <_r 1 <_r 2 <_r \cdots
    <_r n,
  \end{equation}
corresponding to the right lexicographic order on the set $[n]
\times \{-, +\}$, and endowed with the standard involution $\,
\bar{}: \Omega_n \to \Omega_n$, mapping $a$ to $\bar{a}$ and vice
versa, for every $a \in [n]$.

\subsection{Compositions and partitions}
\label{subsec:comp}

A \emph{composition} of a positive integer $n$, written as $\alpha
\vDash n$, is a sequence $\alpha = (\alpha_1,
\alpha_2,\dots,\alpha_k)$ of positive integers (parts) summing to
$n$. We will write $S(\alpha) = \{r_1, r_2,\dots,r_k\}$ for the
set of partial sums $r_i = \alpha_1 + \alpha_2 + \cdots +
\alpha_i$ $(1 \le i \le k)$ of such $\alpha$, and will denote by
$\Comp(n)$ the set of all compositions of $n$. Clearly, the map
which assigns $S(\alpha)$ to $\alpha$ is a bijection from
$\Comp(n)$ to the set of all subsets of $[n]$ which contain $n$. A
\emph{partition} of $n$, written as $\lambda \vdash n$, is a
composition $\lambda$ of $n$ whose parts appear in a weakly
decreasing order. We shall also consider the empty composition
(and partition) of $n = 0$.

A \emph{bipartition} of $n$, written as $(\lambda, \mu) \vdash n$,
is a pair $(\lambda, \mu)$ of (possibly empty) partitions such
that $\lambda \vdash k$ and $\mu \vdash n-k$ for some $0 \le k \le
n$.

A \emph{signed composition} of $n$ is a composition $\gamma$ of
$n$ some of the parts of which may be barred. Such a $\gamma$ can
be encoded by its set of partial sums $S(\gamma) = \{s_1,
s_2,\dots,s_k\}$, where $s_1 < s_2 < \cdots < s_k = n$, together
with the sign map $\veps: S(\gamma) \to \{-, +\}$ defined by
setting $\veps(s_i) = +$ if the $i$th part of $\gamma$ is unbarred
and $\veps(s_i) = -$ otherwise. Clearly, the map which assigns the
pair $(S(\gamma), \veps)$ to $\gamma$ is a bijection from the set
$\Comp^B(n)$ of all signed compositions of $n$ to the set
$\Sigma^B(n)$ consisting of all pairs $(S, \veps)$, where $S$ is a
subset of $[n]$ which contains $n$ and $\veps: S \to \{-, +\}$ is
a map. Thus, as observed for instance in \cite[Section~3.2]{HP10},
the total number of signed compositions of $n$ is equal to $2
\cdot 3^{n-1}$. We will consider the elements of $\Sigma^B(n)$,
which we think of as ``signed sets", instead of the corresponding
signed compositions, whenever this is notationally convenient.

Given $\sigma = (S, \veps) \in \Sigma^B(n)$, with $S = \{s_1,
s_2,\dots,s_k\}$ and $s_1 < s_2 < \cdots < s_k = n$, we may extend
$\veps$ to a map $\teps: [n] \to \{-, +\}$ by setting $\teps(j) :=
\veps(s_i)$ for all $s_{i-1} < j \le s_i$ $(1 \le i \le k)$, where
$s_0 := 0$. We will refer to $\teps$ as the \emph{sign vector} of
$\sigma$ (and of the corresponding signed composition). The
following notation will be useful in Section~\ref{subsec:quasi}.
\begin{definition} \label{def:wDes}
For $\sigma = (S, \veps) \in \Sigma^B(n)$ we denote by
$\wDes(\sigma)$ the set of elements $s_i \in S$, other than $s_k =
n$, for which either $\veps(s_i) = \veps(s_{i+1})$, or else
$\veps(s_i) = +$ and $\veps(s_{i+1}) = -$.
\end{definition}
For example, if $n=9$ and $\gamma = (2, \bar{1}, \bar{2}, 1, 3)$,
then for the signed set $\sigma = (S, \veps)$ corresponding to
$\gamma$ we have $S = \{2, 3, 5, 6, 9\}$, $\veps(2) = \veps(6) =
\veps(9) = +$, $\veps(3) = \veps(5) = -$, $\wDes(\sigma) = \{2, 3,
6\}$ and sign vector $\teps = (+, +, -, -, -, +, +, +, +)$.

\subsection{Permutations and tableaux}
\label{subsec:perm}

We will denote by $\mathfrak{S}_n$ the symmetric group of all
permutations of the set $[n]$, namely bijective maps $w: [n] \to
[n]$, and by $\SYT(\lambda)$ the set of all standard Young
tableaux of shape $\lambda \vdash n$. We recall that the
Robinson--Schensted correspondence is a bijection of fundamental
importance from $\mathfrak{S}_n$ to the set of pairs $(P, Q)$ of
standard Young tableaux of the same shape and size $n$. The
\emph{descent set} of a permutation $w \in \mathfrak{S}_n$ is
defined as $\Des(w) := \{i \in [n-1]: w(i) > w(i+1)\}$. The
\emph{descent set} $\Des(Q)$ of a standard Young tableau $Q \in
\SYT(\lambda)$, where $\lambda \vdash n$, is the set of all $i \in
[n-1]$ for which $i+1$ appears in a lower row in $Q$ than $i$
does. A basic property of the Robinson--Schensted correspondence
asserts that $\Des(w) = \Des(Q(w))$, where $(P(w), Q(w))$ is the
pair of tableaux associated to $w \in \mathfrak{S}_n$.

The hyperoctahedral group $B_n$ consists of all signed
permutations of length $n$, meaning bijective maps $w: \Omega_n
\to \Omega_n$ such that $w(a) = b \Rightarrow w(\bar{a}) =
\bar{b}$ for every $a \in \Omega_n$. Unless indicated otherwise,
we will think of a signed permutation $w \in B_n$ as the sequence of values $(w(1), w(2),\dots,w(n))$, 
namely as a permutation in $\mathfrak{S}_n$, written in one line notation, with some of its entries barred. The (unsigned) descent set of
$w\in B_n$ is defined as the descent set of the sequence
$(w(1),w(2), \ldots, w(n))$ with respect to the total order
\eqref{eq:total}, namely $\Des(w) := \{i \in [n-1]: w(i) >_r
w(i+1)\}$.
\begin{definition}\label{cDes}
The \emph{signed} (or \emph{colored}) \emph{descent set} of $w \in B_n$, denoted $\cDes(w)$, is the signed set $(S, \varepsilon) \in \Sigma^B (n)$ defined as follows:
  \begin{itemize}
    \item[$\bullet$]
    The set $S$ consists of $n$ along with all $s \in [n-1]$ for which
    either $w(s) >_r w(s+1)$ or $w(s)$ is barred and $w(s+1)$ is unbarred.
    \item[$\bullet$]
    For every $s \in S$, $\veps(s) = -$ if $w(s)$ is barred and $\veps(s) = +$ otherwise.
  \end{itemize}
\end{definition}
In other words, denoting $|w| = (|w(1)|, \ldots, |w(n)|) \in \mathfrak{S}_n$ and defining $\veps_w : [n] \to \{-,+\}$ by $\veps_w(i) = -$ if $w(i)$ is barred and $\veps_w(i) = +$ otherwise,
the set $S$ is the union of $\Des(|w|)$, $\{n\}$ and 
$\{i \in [n-1] \,:\, \veps_w(i) \ne \veps_w(i+1)\}$,
while $\veps$ is the restriction of $\veps_w$ to $S$.
For example, if $w = (\bar{5}, \bar{2}, \bar{8}, 1, 3, 9, 4, \bar{6}, \bar{7}) \in B_9$ then
$\cDes(w) = (S, \veps)$ with $S = \{1, 3, 6, 7, 9\}$ and $\veps(1) = \veps(3) = \veps(9) = -$, $\veps(6) = \veps(7) = +$.
We note that if $\sigma = \cDes(w)$ for
some $w \in B_n$ then $\wDes(\sigma) = \Des(w) := \{i \in [n-1]:
w(i) >_r  w(i+1)\}$, where $\wDes(\sigma)$ is as in
Definition~\ref{def:wDes}.

Given a bipartition $(\lambda, \mu)$ of $n$, a \emph{standard
Young bitableau} of shape $(\lambda, \mu)$ and size $n$ is a pair $Q = (Q^+,
Q^-)$ of tableaux which are (strictly) increasing along rows and
columns and have the following properties: (a) $Q^+$ has shape
$\lambda$; (b) $Q^-$ has shape $\mu$; and (c) every element of
$[n]$ appears (exactly once) in either $Q^+$ or $Q^-$. We will
denote by $\SYT(\lambda, \mu)$ the set of all standard Young
bitableaux of shape $(\lambda, \mu)$.

The \emph{descent set}, denoted
by $\Des(Q)$, of a bitableau $Q = (Q^+, Q^-) \in \SYT(\lambda,
\mu)$ consists of all 
$i\in [n-1]$, such that $i$ and $i+1$ appear in the same tableau ($Q^+$ or $Q^-$) and
    $i+1$ appears in a lower row than $i$, or $i$ appears in $Q^+$ and $i+1$ appears in 
$Q^-$.

\begin{definition}
The \emph{signed} (or \emph{colored}) \emph{descent set}, denoted
by $\cDes(Q)$, of a bitableau $Q = (Q^+, Q^-) \in \SYT(\lambda,
\mu)$ is the element $(S, \veps) \in \Sigma^B (n)$ defined as
follows:
  \begin{itemize}
    \item[$\bullet$]
    The set $S$ consists of $n$ along with all $s \in [n-1]$ for which
    either $s$ and $s+1$ appear in the same tableau ($Q^+$ or $Q^-$) and
    $s+1$ appears in a lower row than $s$,
    or $s$ and $s+1$ appear in different tableaux.
    \item[$\bullet$]
    For every $s \in S$, $\veps(s) = +$ if $s$ appears in $Q^+$ and $\veps(s) = -$ otherwise.
  \end{itemize}
\end{definition}
Clearly, for the corresponding sign vector $\teps: [n] \to \{-,
+\}$ we have, for every $i \in [n]$, $\teps(i) = +$ if $i$ appears
in $Q^+$ and $\teps(i) = -$ if $i$ appears in $Q^-$.

The Robinson--Schensted correspondence has a natural
$B_n$-analogue; see, for instance, \cite[Section~6]{Sta82}
\cite{SW85} and Section~\ref{sec:Knuth}. This analogue is a
bijection from $B_n$ to the set of pairs $(P, Q)$ of standard
Young bitableaux of the same shape and size $n$. It has the
property that $\cDes(w) = \cDes(Q^B(w))$, where $(P^B(w), Q^B(w))$
is the pair associated to $w \in B_n$.

\subsection{Characters and symmetric functions}
\label{subsec:char}

Conjugacy classes in $\mathfrak{S}_n$ consist of permutations of
given cycle type $\lambda \vdash n$. As a result, the (complex)
irreducible characters of $\mathfrak{S}_n$ are indexed by
partitions of $n$. We will denote by $\chi^\lambda$ the
irreducible character corresponding to $\lambda \vdash n$. For a
class function $\chi: \mathfrak{S}_n \to \CC$ and composition
$\alpha \models n$, we will write $\chi(\alpha)$ for the value of
$\chi$ on any element of $\mathfrak{S}_n$ whose cycle type is (the
decreasing rearrangement of) $\alpha$.

We will denote by $\Lambda(x)$ the $\CC$-algebra of symmetric
functions in $x$. The algebra $\Lambda(x)$ encodes the
combinatorics of characters of the symmetric groups via the
Frobenius characteristic map, defined by
  \begin{equation} \label{eq:chA}
    \ch(\chi) \ = \ \frac{1}{n!} \, \sum_{w \in \mathfrak{S}_n} \chi(w) p_w(x)
  \end{equation}
where $\chi: \mathfrak{S}_n \to \CC$ is a class function, $p_w(x)
= p_\alpha (x)$ for every permutation $w \in \mathfrak{S}_n$ of
cycle type $\alpha \vdash n$, and $p_\alpha(x)$ is a power sum
symmetric function. This map is a $\CC$-linear isomorphism from
the space of such class functions on $\mathfrak{S}_n$ to the
degree $n$ homogeneous part $\Lambda^n (x)$ of $\Lambda(x)$, and
satisfies $\ch(\chi^\lambda) =  s_\lambda(x)$ for every $\lambda
\vdash n$; see \cite[Section~7.18]{StaEC2} for a detailed
discussion and further information.

The values $\chi^\lambda(\alpha)$ of the irreducible characters of
$\mathfrak{S}_n$ appear in the Frobenius formula
  \begin{equation} \label{eq:FrobA}
    p_\alpha (x) \ = \ \sum_{\lambda \vdash n} \chi^\lambda(\alpha) s_\lambda(x)
  \end{equation}
for $\mathfrak{S}_n$, expressing $p_\alpha(x)$ as a linear
combination of Schur functions.

We now recall an explicit formula for $\chi^\lambda(\alpha)$. Let
$\alpha = (\alpha_1, \alpha_2, \ldots, \alpha_k) \in \Comp(n)$ be
a composition of $n$ with $S(\alpha) = \{r_1, r_2,\dots,r_k\}$, as
in Section~\ref{subsec:comp} (so that $r_k = n$), and set $r_0 =
0$. A subset $S$ of $[n-1]$ is said to be \emph{$\alpha$-unimodal}
(or \emph{unimodal with respect to $\alpha$})
\cite[Definition~3.1]{AR15} if the intersection of $S$ with each
of the sets $\{r_{i-1} + 1,\dots,r_i - 1\}$ $(1 \le i \le k)$ is a
prefix (possibly empty) of the latter. For instance, if $\alpha =
(n)$, then the $\alpha$-unimodal subsets of $[n-1]$ are those
equal to $[p]$ for some $p \in \{0, 1,\dots,n-1\}$; and if $\alpha
= (1, 1,\dots,1)$, then every subset of $[n-1]$ is
$\alpha$-unimodal. As another example, if $\alpha = (3, 1, 4, 2)$,
then $\{1, 3, 5, 6\}$ is $\alpha$-unimodal but $\{1, 3, 5, 7\}$ is
not. We will denote by $U_\alpha$ the set of $\alpha$-unimodal
subsets of $[n-1]$.

The following theorem is a special case of
\cite[Theorem~4]{Roi97}; see also \cite[Section~I.2]{BR99} and
references therein. A direct combinatorial proof appeared
in~\cite{Ram98}.

\begin{theorem} {\rm (\cite{Roi97})} \label{thm:roi}
For all partitions $\lambda \vdash n$ and compositions $\alpha
\models n$,
  \begin{equation} \label{eq:chiroi}
    \chi^\lambda (\alpha) \ =
              \sum_{\substack{Q \in \SYT(\lambda) \\ \Des(Q) \in U_\alpha}}
              (-1)^{|\Des(Q) \sm S(\alpha)|}.
  \end{equation}
\end{theorem}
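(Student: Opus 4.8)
The plan is to derive Theorem~\ref{thm:roi} from the known identity
\eqref{eq:FrobA} by using the descent-set refinement of Schur functions. Recall that for each $\lambda \vdash n$ one has
$s_\lambda(x) = \sum_{Q \in \SYT(\lambda)} F_{\Des(Q), n}(x)$,
where $F_{S,n}(x)$ denotes Gessel's fundamental quasisymmetric function indexed by a subset $S \subseteq [n-1]$. Substituting this into the Frobenius formula and comparing coefficients against a suitable basis will reduce the statement to a purely combinatorial count of $\alpha$-unimodal sets.

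First I would fix $\alpha = (\alpha_1, \dots, \alpha_k) \models n$ with partial sums $S(\alpha) = \{r_1 < \dots < r_k = n\}$, and expand the power sum $p_\alpha(x)$ in the fundamental basis: since $p_{\alpha_i}(x)$ is itself a sum of fundamental quasisymmetric functions in the variables it uses, a standard computation (the ``descent decomposition'' of $p_\alpha$; see \cite[Section~7.18]{StaEC2} or \cite[Section~I.2]{BR99}) gives
$p_\alpha(x) = \sum_{S \subseteq [n-1]} c_{\alpha,S}\, F_{S,n}(x)$,
where the coefficient $c_{\alpha,S}$ is nonzero precisely when $S$ is $\alpha$-unimodal, in which case $c_{\alpha,S} = (-1)^{|S \sm S(\alpha)|}$. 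Then I would plug the Schur expansion $s_\lambda = \sum_{Q} F_{\Des(Q),n}$ into \eqref{eq:FrobA}, obtaining
$p_\alpha(x) = \sum_{\lambda \vdash n} \chi^\lambda(\alpha) \sum_{Q \in \SYT(\lambda)} F_{\Des(Q),n}(x)$.
Since the fundamental quasisymmetric functions $\{F_{S,n}\}_{S \subseteq [n-1]}$ are linearly independent, equating coefficients of each $F_{S,n}$ in the two expansions of $p_\alpha$ yields, for every $S \subseteq [n-1]$,
$c_{\alpha,S} = \sum_{\lambda \vdash n} \chi^\lambda(\alpha)\, \#\{Q \in \SYT(\lambda) : \Des(Q) = S\}$.
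This is a single scalar identity; to extract $\chi^\lambda(\alpha)$ itself one either argues directly or, more cleanly, takes the inner product of \eqref{eq:FrobA} with $s_\lambda$ using the fact that $\langle F_{S,n}, s_\lambda \rangle$ counts $Q \in \SYT(\lambda)$ with $\Des(Q) = S$ (equivalently, pair $F$ with the dual basis). Summing $c_{\alpha,S}$ over all $S$ that equal some $\Des(Q)$ with $Q \in \SYT(\lambda)$ then gives exactly $\chi^\lambda(\alpha) = \sum_{Q \in \SYT(\lambda),\ \Des(Q) \in U_\alpha} (-1)^{|\Des(Q) \sm S(\alpha)|}$, which is \eqref{eq:chiroi}.

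The main obstacle is establishing the fundamental-basis expansion of $p_\alpha$ with the precise coefficients $(-1)^{|S \sm S(\alpha)|}$ supported on $\alpha$-unimodal sets; this is the combinatorial heart of the matter. The key case is $\alpha = (n)$: one shows $p_n(x) = \sum_{p=0}^{n-1} (-1)^{p}\, F_{[p],n}(x)$ by a direct manipulation of the defining monomial sums (the sets $[p]$ are exactly the $(n)$-unimodal subsets of $[n-1]$). The general case follows because $p_\alpha = \prod_i p_{\alpha_i}$ and the product of fundamental quasisymmetric functions in disjoint blocks of variables assembles into fundamentals indexed by shuffles; tracking the sign through this product is what produces the $\alpha$-unimodality condition and the exponent $|S \sm S(\alpha)|$. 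An alternative, and perhaps the shortest route for a self-contained proof, is to invoke \cite[Theorem~4]{Roi97} directly as stated (the theorem is attributed to \cite{Roi97}), in which case the only task is to verify that the formula there specializes to \eqref{eq:chiroi}; but I would prefer to present the quasisymmetric-function derivation above, since it is the argument that will generalize to the type $B$ setting in Theorem~\ref{thm:roiB}. I would also remark that a fully combinatorial proof, bypassing symmetric functions, appears in \cite{Ram98}.
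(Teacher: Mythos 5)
First, note that the paper itself offers no proof of Theorem~\ref{thm:roi}: it is quoted as a special case of \cite[Theorem~4]{Roi97}, with \cite{Ram98} cited for a direct combinatorial proof. So your fallback option (invoke the reference) is exactly what the paper does, and any self-contained derivation has to stand entirely on its own.

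Unfortunately, the central lemma of your derivation is false. You claim that the expansion of $p_\alpha(x)$ in the basis $\{F_{n,S}(x)\}$ is supported on $\alpha$-unimodal sets with coefficients $(-1)^{|S \sm S(\alpha)|}$, and in particular that $p_n(x) = \sum_{p=0}^{n-1}(-1)^p F_{n,[p]}(x)$. Already for $n=3$ this fails: $F_{3,\emptyset}-F_{3,\{1\}}+F_{3,\{1,2\}} = \sum_{i\le k} x_i^2x_k + e_3(x)$, which contains the monomial $x_1^2x_2$ and so is not $p_3(x)$. The true expansion is $p_3 = F_{3,\emptyset}-F_{3,\{1\}}-F_{3,\{2\}}+F_{3,\{1,2\}}$, i.e.\ the coefficient is $(-1)^{|S|}$ on \emph{every} $S\subseteq[2]$, including the non-unimodal set $\{2\}$; this follows from $p_n=\sum_k(-1)^k s_{(n-k,1^k)}$ together with $s_{(n-k,1^k)}=\sum_{|S|=k}F_{n,S}$. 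Accordingly, the scalar identity you obtain by ``equating coefficients of $F_{n,S}$'' is wrong: for $\alpha=(3)$ and $S=\{2\}$ your left-hand side is $0$ while $\sum_{\lambda\vdash 3}\chi^\lambda(3)\,|\{Q\in\SYT(\lambda):\Des(Q)=\{2\}\}| = \chi^{(2,1)}(3) = -1$. The identity you actually need is a \emph{ribbon} expansion, not a fundamental-quasisymmetric one: $p_\alpha = \sum_{S\in U_\alpha}(-1)^{|S\sm S(\alpha)|}\, r_S$, where $r_S$ is the ribbon Schur function whose descent composition has partial-sum set $S\cup\{n\}$ (for $\alpha=(n)$ this is the classical $p_n=\sum_p(-1)^p s_{(n-p,1^p)}$). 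Since the ribbons are linearly dependent, such an expansion is not unique and cannot be produced or verified by comparing coefficients in $\QSym^n$; but it does give the theorem upon taking $\langle\,\cdot\,,s_\lambda\rangle$, because $\langle r_S, s_\lambda\rangle = |\{Q\in\SYT(\lambda):\Des(Q)=S\}|$ --- it is the ribbon, dual to $F_{n,S}$ under the $\QSym$--$\mathrm{NSym}$ pairing, that computes this count, not $F_{n,S}$ itself, whose inner product with $s_\lambda$ is not even defined in the paper's conventions. The multiplicative step is then the ribbon product rule (the product of two ribbons is the sum of the ribbons of the concatenated and near-concatenated compositions), and this is where $\alpha$-unimodality and the sign $(-1)^{|S\sm S(\alpha)|}$ genuinely arise. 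Finally, your closing step of ``summing $c_{\alpha,S}$ over all $S$ that equal some $\Des(Q)$ with $Q\in\SYT(\lambda)$'' cannot isolate a single $\chi^\lambda(\alpha)$ from an identity whose right-hand side already sums over all $\lambda$; the pairing with $s_\lambda$ is the correct mechanism. In short, the architecture (Frobenius formula plus a descent-indexed expansion of $p_\alpha$) is salvageable, but it must be carried out with ribbon Schur functions rather than with the $F_{n,S}$.
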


The concept of $\alpha$-unimodality, and Theorem~\ref{thm:roi} in
particular, were recently applied to prove conjectures of Regev
concerning induced characters~\cite[Section~9]{ER14} and of
Shareshian and Wachs concerning chromatic quasisymmetric
functions~\cite{Ath15}.

\medskip
We now briefly describe the analogue of this theory for the group
$B_n$. Conjugacy classes in $B_n$, and hence (complex) irreducible
$B_n$-characters, are in one-to-one correspondence with
bipartitions of $n$. More precisely, each element $w \in B_n$,
viewed as a permutation of the set $\Omega_n$, can be written as a
product of disjoint cycles of total length $2n$. Moreover, if $c =
(a_1, a_2, \ldots, a_k)$ is such a cycle then so is $\bar{c} =
(\bar{a}_1, \bar{a}_2, \ldots, \bar{a}_k)$, and either $c$ and
$\bar{c}$ are disjoint or else $c = \bar{c}$. In the former case,
the product $c \bar{c}$ is said to be a \emph{positive cycle} of
$w$, of length $k$; otherwise $k$ is necessarily even and $c =
\bar{c}$ is said to be a \emph{negative cycle} of $w$, of length
$k/2$. Then the pair of partitions $(\alpha, \beta)$ for which the
parts of $\alpha$ (respectively, $\beta$) are the lengths of the
positive (respectively, negative) cycles of $w$ is a bipartition
of $n$, called the \emph{signed cycle type} of $w$. Two elements
of $B_n$ are congugate if and only if they have the same signed
cycle type. We will denote by $C_{\alpha, \beta}$ the conjugacy
class of elements of $B_n$ of signed cycle type $(\alpha, \beta)$,
by $\chi^{\lambda,\mu}$ the irreducible $B_n$-character
corresponding to $(\lambda, \mu) \vdash n$, and by $\chi(\alpha,
\beta)$ the value of a class function $\chi: B_n \to \CC$ at an
arbitrary element of $C_{\alpha, \beta}$.

For $\lambda = (\lambda_1, \lambda_2,\dots,\lambda_r) \vdash n$,
we set
\[
p^+_\lambda (x, y) \ = \ p^+_{\lambda_1} p^+_{\lambda_2} \cdots
p^+_{\lambda_r} \qquad \text{and} \qquad p^-_\lambda (x, y) \ = \
p^-_{\lambda_1} p^-_{\lambda_2} \cdots p^-_{\lambda_r}
\]
where
\[
p^+_k \ = \ \sum_{i \ge 1} \, (x_i^k + y_i^k) \qquad \text{and}
\qquad p^-_k \ = \ \sum_{i \ge 1} \, (x_i^k - y_i^k)
\]
for $k \ge 1$. Thus $p^+_\lambda (x, y)$ and $p^-_\lambda (x, y)$
are homogeneous elements of degree $n$ of $\Lambda(x) \otimes
\Lambda(y)$. The Frobenius characteristic of a class function
$\chi: B_n \to \CC$ is defined as
 \begin{equation} \label{eq:chB}
   \ch(\chi) \ = \ \frac{1}{2^n n!} \, \sum_{w \in B_n} \chi(w) p_w(x, y)
  \end{equation}
where $p_w(x, y) := p^+_\alpha (x, y) p^-_\beta (x, y)$ if $w \in
B_n$ has signed cycle type $(\alpha, \beta)$. The Frobenius
characteristic is a $\CC$-linear isomorphism from the space of
such class functions on $B_n$ to the degree $n$ homogeneous part
of $\Lambda (x) \otimes \Lambda(y)$. It satisfies
$\ch(\chi^{\lambda, \mu}) = s_\lambda(x) s_\mu(y)$ for every
bipartition $(\lambda, \mu) \vdash n$. The following lemma will be
used in Section~\ref{subsec:derange}.
\begin{lemma} {\rm (\cite[Lemma~21~(i)]{Poi98})}
\label{lem:ch1B} For the trivial character of $B_n$ we have
$\ch(1_{B_n}) = s_n (x)$.
\end{lemma}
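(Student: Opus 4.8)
The plan is to compute $\ch(1_{B_n})$ directly from the definition~\eqref{eq:chB} by summing the power-sum expressions $p_w(x,y)$ over all $w \in B_n$. Since the trivial character is identically $1$, we have $\ch(1_{B_n}) = \frac{1}{2^n n!} \sum_{w \in B_n} p_w(x,y)$, and because $p_w(x,y)$ depends only on the signed cycle type of $w$, this sum reorganizes as $\sum_{(\alpha,\beta) \vdash n} \frac{|C_{\alpha,\beta}|}{2^n n!} p^+_\alpha(x,y) p^-_\beta(x,y)$. The first task is therefore to recall the standard formula for the size of the conjugacy class $C_{\alpha,\beta}$ in $B_n$: if $\alpha$ has $m_i$ parts equal to $i$ and $\beta$ has $m'_i$ parts equal to $i$, then $|C_{\alpha,\beta}| = \frac{2^n n!}{z_{\alpha,\beta}}$ where $z_{\alpha,\beta} = \prod_i i^{m_i} m_i! \cdot \prod_i (2i)^{m'_i} m'_i!$. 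This reduces the claim to the symmetric-function identity $\sum_{(\alpha,\beta)\vdash n} \frac{1}{z_{\alpha,\beta}} p^+_\alpha(x,y) p^-_\beta(x,y) = s_n(x)$.

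Next I would prove this identity by a generating-function argument. Summing over all $n$ and introducing the standard exponential bookkeeping, one gets a product over part sizes $k \ge 1$: the factor coming from positive cycles of length $k$ contributes $\sum_{m \ge 0} \frac{1}{k^m m!}(p^+_k)^m = \exp(p^+_k/k)$, and the factor from negative cycles of length $k$ contributes $\sum_{m \ge 0} \frac{1}{(2k)^m m!}(p^-_k)^m = \exp(p^-_k/(2k))$. Hence $\sum_{n \ge 0} \sum_{(\alpha,\beta)\vdash n} \frac{1}{z_{\alpha,\beta}} p^+_\alpha p^-_\beta = \exp\!\left(\sum_{k \ge 1} \frac{p^+_k}{k} + \sum_{k \ge 1} \frac{p^-_k}{2k}\right)$. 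Substituting $p^+_k = \sum_i (x_i^k + y_i^k)$ and $p^-_k = \sum_i(x_i^k - y_i^k)$, the exponent becomes $\sum_i \sum_k \left(\frac{x_i^k}{k} + \frac{y_i^k}{2k}\right) + \sum_i \sum_k\left(\frac{x_i^k}{2k} - \frac{y_i^k}{2k}\right) = \sum_i \sum_k \frac{x_i^k}{k} = -\sum_i \log(1 - x_i)$, where the $y_i$ contributions cancel exactly. Therefore the total generating function equals $\prod_i (1-x_i)^{-1} = \sum_{n \ge 0} h_n(x) = \sum_{n\ge 0} s_n(x)$, and extracting the degree-$n$ part gives $\ch(1_{B_n}) = s_n(x)$, as claimed.

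The main obstacle is really just bookkeeping: one must be careful that the factor $z_{\alpha,\beta}$ for negative parts carries $(2i)^{m'_i} m'_i!$ rather than $i^{m'_i} m'_i!$, reflecting that each negative $k$-cycle of $w \in B_n$ corresponds to a single cycle of length $2k$ on $\Omega_n$ and hence has centralizer contribution $2k$; getting this constant right is exactly what makes the $y_i$-terms cancel in the exponent. Alternatively, and perhaps more cleanly, one can avoid the explicit class-size computation and instead invoke the known fact that the Frobenius characteristic of the $B_n$-character induced from the trivial character of the Young subgroup indexed by $((n),\varnothing)$ is $s_n(x)$, or argue via the principal specialization / the well-known decomposition of $\Lambda(x)\otimes\Lambda(y)$ under the $p^\pm$ basis; but the direct generating-function computation above is self-contained and short.
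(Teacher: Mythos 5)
The paper offers no proof of this lemma---it is quoted from Poirier---so the comparison here is with the standard argument, which is indeed the direct computation you propose. Your strategy (reduce to $\sum_{(\alpha,\beta)\vdash n} z_{\alpha,\beta}^{-1}\, p^+_\alpha p^-_\beta = s_n(x)$ and verify it by exponentiating over part sizes) is the right one, but the execution contains a genuine error in the one place you yourself flag as delicate: the class-size constant. The centralizer contribution of a \emph{positive} $i$-cycle is $2i$, not $i$, because the positive cycle $c\bar{c}$ commutes not only with its $i$ powers but also with the simultaneous negation of its $i$ letters; equivalently, in $\ZZ_2\wr\mathfrak{S}_i$ the centralizer of an $i$-cycle with trivial product of signs has order $2i$. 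So the correct constant is $z_{\alpha,\beta}=\prod_i (2i)^{m_i}m_i!\cdot\prod_i (2i)^{m'_i}m'_i!$, with a factor of $2$ per cycle of \emph{either} sign. (Quick check: your formula gives $\sum_{(\alpha,\beta)\vdash 1} z_{\alpha,\beta}^{-1} = 1 + \tfrac12 = \tfrac32$, whereas this sum must equal $1$; already for $n=1$ your formula makes the identity's conjugacy class have size $2$ in $B_1$.)

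This error propagates: with your $z_{\alpha,\beta}$ the positive-cycle factor is $\exp(p^+_k/k)$ rather than $\exp(p^+_k/(2k))$, and the exponent then equals $\sum_{i,k}\bigl(\tfrac{3x_i^k}{2k}+\tfrac{y_i^k}{2k}\bigr)$, yielding $\prod_i(1-x_i)^{-3/2}(1-y_i)^{-1/2}$ instead of $\sum_n s_n(x)$. Your displayed simplification is also internally inconsistent ($p^+_k/k$ expands with $y_i^k/k$, not $y_i^k/(2k)$, and even the expression as written sums to $\sum_{i,k} 3x_i^k/(2k)$, not $\sum_{i,k} x_i^k/k$). The repair is immediate once the constant is fixed: both factors become $\exp\bigl(p^{\pm}_k/(2k)\bigr)$, the exponent is $\sum_k (p^+_k+p^-_k)/(2k)=\sum_k p_k(x)/k$, the $y$-variables cancel for the right reason, and the generating function is $\prod_i(1-x_i)^{-1}=\sum_{n\ge 0}h_n(x)=\sum_{n\ge 0}s_n(x)$ as required. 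With that correction the proof is complete and self-contained.
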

The Frobenius formula for $B_n$ states that
  \begin{equation} \label{eq:FrobB}
    p^+_\alpha (x, y) p^-_\beta (x, y) \ = \
    \sum_{(\lambda, \mu) \vdash n} \chi^{\lambda, \mu} (\alpha, \beta) s_\lambda(x) s_\mu(y)
  \end{equation}
for every bipartition $(\alpha,\beta) \vdash n$.

An analogue of Theorem~\ref{thm:roi} for $B_n$ will be proved in
Section~\ref{sec:proof} (see Theorem~\ref{thm:roiB}).

We write $\langle \, \cdot \, , \, \cdot \, \rangle$ for the inner
product on $\Lambda(x)$ (respectively, on $\Lambda (x) \otimes
\Lambda(y)$) for which the Schur functions $s_\lambda(x)$
(respectively, the functions $s_\lambda(x) s_\mu(y)$) form an
orthonormal basis.

\subsection{Quasisymmetric functions}
\label{subsec:quasi}

We denote by $\QSym^n$ the $\CC$-vector space of homogeneous
quasisymmetric functions of degree $n$ in $x$. The
\emph{fundamental quasisymmetric function} associated to $S
\subseteq [n-1]$ is defined as
  \begin{equation} \label{eq:defF(x)}
    F_{n, S} (x) \ =
    \sum_{\substack{i_1 \le i_2 \le \ldots \le i_n \\ j \in S \,\Rightarrow\, i_j < i_{j+1}}}
    x_{i_1} x_{i_2} \cdots x_{i_n}.
  \end{equation}
The set $\{ F_{n, S}(x): S \subseteq [n-1]\}$ is known to be a
basis of $\QSym^n$. The following proposition expresses the Schur
function $s_\lambda(x)$ as a linear combination of the elements of
this basis.
\begin{proposition} {\rm (\cite[Theorem~7.19.7]{StaEC2})}
\label{prop:sFexpansion} For every $\lambda \vdash n$,
  \begin{equation} \label{eq:sFexpansion}
    s_\lambda(x) \ = \ \sum_{Q \in \SYT (\lambda)} F_{n, \Des(Q)} (x).
  \end{equation}
\end{proposition}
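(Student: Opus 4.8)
The plan is to deduce this from the monomial (semistandard tableau) description of Schur functions together with the standardization map. Recall that $s_\lambda(x) = \sum_T x^T$, where $T$ ranges over all semistandard Young tableaux in $\mathrm{SSYT}(\lambda)$ with entries in $\{1, 2, \ldots\}$, and $x^T = \prod_i x_i^{m_i(T)}$ with $m_i(T)$ the number of entries of $T$ equal to $i$. The idea is to partition this sum according to the \emph{standardization} $\mathrm{std}(T) \in \SYT(\lambda)$ of $T$, obtained by relabelling the entries of $T$ by $1, 2,\dots,n$ in weakly increasing order of value, breaking ties among equal entries by assigning smaller labels to cells in lower rows (and, within a row, from left to right); since equal entries of an SSYT occupy distinct columns, this is forced and yields a genuine standard Young tableau.

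First I would establish the key bijective fact: for a fixed $Q \in \SYT(\lambda)$, the tableaux $T$ with $\mathrm{std}(T) = Q$ are exactly those obtained by choosing a weakly increasing sequence $i_1 \le i_2 \le \cdots \le i_n$ of positive integers with $i_j < i_{j+1}$ whenever $j \in \Des(Q)$, and placing $i_k$ into the cell of $Q$ that contains the entry $k$. The verification has two halves. Given such a sequence, the resulting filling $T$ is semistandard: rows are weakly increasing because the entries along a row of $Q$ form an increasing run of labels, hence get weakly increasing values; columns are strictly increasing because, if the cell labelled $k$ in $Q$ lies directly above the cell labelled $l > k$, then some index in $\{k, k+1,\dots,l-1\}$ must be a descent of $Q$ — otherwise the rows of the cells labelled $k, k+1,\dots,l$ would be weakly decreasing in row index, contradicting that the last is one row below the first — so $i_k < i_l$. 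One then checks $\mathrm{std}(T) = Q$: within a maximal run of equal values of $T$, the corresponding labels $a, a+1,\dots,b$ of $Q$ contain no descent, so consecutive ones occupy distinct columns with weakly decreasing row index, which is precisely the order the tie-breaking rule reproduces. Conversely, for any $T$ with $\mathrm{std}(T) = Q$, reading the entries of $T$ off the cells of $Q$ labelled $1, 2,\dots,n$ gives a weakly increasing sequence, and a descent $j \in \Des(Q)$ forces strictness because the cells labelled $j$ and $j+1$ then lie in different rows (with $j+1$ strictly lower), hence in different columns, and so by the tie-breaking rule cannot carry the same value.

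Granting this, I would sum over $T$ by first grouping terms according to $Q = \mathrm{std}(T)$: for each fixed $Q$ the contribution is $\sum x_{i_1} x_{i_2} \cdots x_{i_n}$ over all weakly increasing $i_1 \le \cdots \le i_n$ with $i_j < i_{j+1}$ for $j \in \Des(Q)$, which is exactly $F_{n, \Des(Q)}(x)$ by the definition \eqref{eq:defF(x)}. Hence $s_\lambda(x) = \sum_{T} x^T = \sum_{Q \in \SYT(\lambda)} F_{n, \Des(Q)}(x)$, as claimed.

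The main obstacle is the bookkeeping in the bijection of the second paragraph — in particular fixing the tie-breaking convention in standardization so that it is simultaneously compatible with the non-descent cells (no forced strict increase there) and always produces a bona fide standard tableau. Once that convention is pinned down, the row and column checks reduce to the two short combinatorial arguments indicated above, and the rest is the routine regrouping of the monomial sum.
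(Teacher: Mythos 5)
Your proof is correct: the standardization bijection between semistandard tableaux with $\mathrm{std}(T)=Q$ and weakly increasing sequences that increase strictly at $\Des(Q)$ is exactly the right mechanism, and your row/column and tie-breaking checks are sound (the aside ``hence in different columns'' is anyway automatic, since $T$ is semistandard, so equal values never share a column). Note that the paper does not prove this statement at all --- it is quoted from Stanley, EC2, Theorem~7.19.7 --- and your standardization argument is essentially the classical proof underlying that reference, so there is no divergence to report.
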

Different type $B$ analogues of quasisymmetric functions have been
suggested~\cite{Chow, Petersen05}. The $B_n$-analogues of the
fundamental quasisymmetric functions that we need were introduced
(in the more general setting of $r$-colored permutations) by
Poirier~\cite[Section~3]{Poi98} and were further studied
in~\cite{BH06, HP10}. For a signed set $\sigma = (S, \veps) \in
\Sigma^B (n)$ define
 \begin{equation} \label{eq:defF(xy)}
    F_\sigma (x, y) \ =
    \sum_{\substack{i_1 \le i_2 \le \ldots \le i_n \\ j \in \wDes(\sigma) \,\Rightarrow\, i_j < i_{j+1}}}
    z_{i_1} z_{i_2} \cdots z_{i_n}
  \end{equation}
where $z_{i_j} = x_{i_j}$ if $\teps_j = +$, and $z_{i_j} = y_{i_j}$ 
if $\teps_j = -$ (we use the notation $F_\sigma$, instead of $F_{n, 
\sigma}$, since $n$ is determined by $\sigma = (S, \varepsilon)$ as the
largest element of $S$). For example, if $n=6$ and $S = \{2, 3, 5,
6\}$ with sign vector $\teps = (+, +, -, -, -, +)$, then
\[
    F_\sigma (x, y) \ = \sum_{1 \le i_1 \le i_2 < i_3 < i_4 \le i_5
     \le i_6} x_{i_1} x_{i_2} y_{i_3} y_{i_4} y_{i_5} x_{i_6}.
\]

We leave it to the reader to verify that this definition is
equivalent to the one in~\cite{Poi98}. The functions $F_\sigma (x,
y)$, where $\sigma$ ranges over all signed sets in $\Sigma^B (n)$,
are linearly independent over $\CC$ \cite[Corollary~9]{Poi98} and,
in fact, form a $\CC$-basis of a natural signed analogue of
$\QSym^n$; see \cite[Section~3.2]{HP10}.

\section{Main results: Character formulas and fine sets}
\label{sec:main}

This section introduces the concept of fine set for the
hyperoctahedral group and explains the main results of this paper,
to be proved in the following section. The corresponding concepts
and results from \cite{AR15} for the symmetric group will first be
reviewed.

\subsection{Review of results for the symmetric group}
\label{subsec:main_A} The following key definition from
\cite{AR15} uses the notion of unimodality with respect to a
composition, defined in Section~\ref{subsec:char}.

\begin{definition} \label{def:fineA} {\rm (\cite[Definition~1.3]{AR15})}
Let $\chi$ be a character of the symmetric group $\mathfrak{S}_n$.
A set $\bB$, endowed with a map $\Des: \bB \to 2^{[n-1]}$, is said
to be a \emph{fine set} for $\chi$ if
  \begin{equation} \label{eq:fineA}
    \chi(\alpha) \ = \ \sum_{b \in \bB_\alpha} (-1)^{|\Des(b) \sm
    S(\alpha)|}
  \end{equation}
for every composition $\alpha$ of $n$, where $\bB_\alpha$ is the
set of all elements $b \in \bB$ for which $\Des(b)$ is
$\alpha$-unimodal.
\end{definition}

For example, according to Theorem~\ref{thm:roi}, the set
$\SYT(\lambda)$, endowed with the usual descent map $\Des:
\SYT(\lambda) \to 2^{[n-1]}$, is a fine set for the irreducible
character $\chi^\lambda$. Several other examples (including some
new ones) appear in Sections~\ref{sec:Knuth}--\ref{sec:arc}.

The following theorem is a version of the main result of
\cite{AR15}; we include a proof since this version is only
implicit in \cite{AR15}.

\begin{theorem} \label{thm:mainA}
Let $\chi$ be a character of the symmetric group $\mathfrak{S}_n$
and $\bB$ be a set endowed with a map $\Des: \bB \to 2^{[n-1]}$.
Then $\bB$ is a fine set for $\chi$ if and only if
  \begin{equation} \label{eq:mainA}
    \ch(\chi) \ = \ \sum_{b \in \bB} F_{n, \Des(b)} (x).
  \end{equation}
In particular, the distribution of the descent set over $\bB$ is
uniquely determined by $\chi$.
\end{theorem}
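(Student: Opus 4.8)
The plan is to prove both directions by translating between the two sides of \eqref{eq:fineA} and \eqref{eq:mainA} via the expansion of Schur functions into fundamental quasisymmetric functions from Proposition~\ref{prop:sFexpansion}, combined with Theorem~\ref{thm:roi}. The crucial observation is that both statements are $\CC$-linear in $\chi$, so it suffices to understand how each side behaves when $\chi = \chi^\lambda$ ranges over the irreducible characters. Writing $\chi = \sum_{\lambda \vdash n} c_\lambda \chi^\lambda$, condition \eqref{eq:mainA} becomes $\sum_\lambda c_\lambda s_\lambda(x) = \sum_{b \in \bB} F_{n, \Des(b)}(x)$, and since $\{F_{n,S}(x) : S \subseteq [n-1]\}$ is a basis of $\QSym^n$, this single equation in $\QSym^n$ is equivalent to the system of $2^{n-1}$ scalar equations obtained by comparing coefficients of each $F_{n,S}(x)$: namely $\#\{b \in \bB : \Des(b) = S\} = \sum_\lambda c_\lambda \cdot [\text{coefficient of } F_{n,S} \text{ in } s_\lambda]$, and by Proposition~\ref{prop:sFexpansion} the latter coefficient is $\#\{Q \in \SYT(\lambda) : \Des(Q) = S\}$.

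Next I would show that condition \eqref{eq:fineA}, for all compositions $\alpha \models n$, is equivalent to exactly the same system of scalar equations. Here the key device is that the map sending a function $g$ on subsets of $[n-1]$ to the function $\alpha \mapsto \sum_{S \in U_\alpha} (-1)^{|S \sm S(\alpha)|} g(S)$ is an invertible linear transformation; this invertibility is implicit in \cite{AR15} and is what makes ``fineness'' a genuine constraint rather than a weakening. Concretely, using Theorem~\ref{thm:roi}, the right-hand side of \eqref{eq:fineA} with $g(S) = \#\{b \in \bB : \Des(b) = S\}$ equals $\sum_\lambda c_\lambda \chi^\lambda(\alpha)$ precisely when $\sum_{S \in U_\alpha} (-1)^{|S \sm S(\alpha)|}\big(g(S) - \sum_\lambda c_\lambda \#\{Q \in \SYT(\lambda): \Des(Q)=S\}\big) = 0$ for every $\alpha$; invertibility of the transformation forces the bracketed quantity to vanish for every $S$, which is the desired system. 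Running this equivalence in both directions closes the argument, and the final sentence of the theorem is immediate: \eqref{eq:mainA} determines the multiset $\{\Des(b) : b \in \bB\}$ through the coefficients of the $F_{n,S}(x)$.

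The main obstacle I expect is establishing the invertibility of the ``unimodal summation'' transformation $g \mapsto \big(\alpha \mapsto \sum_{S \in U_\alpha}(-1)^{|S \sm S(\alpha)|} g(S)\big)$ cleanly, since this is the technical heart that \cite{AR15} proves and that I am invoking. One can make it transparent by ordering compositions and subsets compatibly (e.g.\ refining by containment of $S(\alpha)$ and by size) so that the matrix of the transformation is triangular with $\pm 1$ on the diagonal: for $\alpha = (1,1,\dots,1)$ every subset is $\alpha$-unimodal and $S(\alpha) = [n]$, giving a full row, while for the ``finest'' constraint one recovers an identity-like block after unitriangular reduction. Alternatively, since Theorem~\ref{thm:roi} says this transformation applied to the indicator data of $\SYT(\lambda)$ yields $\chi^\lambda$, and the $\chi^\lambda$ are linearly independent class functions while the $s_\lambda$ (equivalently the descent-distribution vectors of $\SYT(\lambda)$, by Proposition~\ref{prop:sFexpansion}) are linearly independent in $\QSym^n$, a dimension/rank count on the $2^{n-1}$-dimensional spaces involved already pins down injectivity hence bijectivity. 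I would use whichever of these is shortest given what \cite{AR15} already supplies, and otherwise the proof is a routine chain of equivalences.
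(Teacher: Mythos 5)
Your proposal follows essentially the same route as the paper: expand $\ch(\chi)$ in the basis $\{F_{n,S}(x)\}$ via Proposition~\ref{prop:sFexpansion} to see that \eqref{eq:mainA} is equivalent to the system $|\{b\in\bB:\Des(b)=S\}|=\sum_{\lambda}c_\lambda\,|\{Q\in\SYT(\lambda):\Des(Q)=S\}|$ for all $S\subseteq[n-1]$, and then pass between this system and \eqref{eq:fineA} using Theorem~\ref{thm:roi} together with an injectivity statement imported from \cite{AR15}. The paper invokes exactly this input in the form of \cite[Theorem~1.4]{AR15} (two fine sets for the same character have equidistributed descent sets), which is precisely your invertibility of the unimodal summation map applied to the difference of the two descent-distribution vectors; with that citation in place your chain of equivalences is complete and correct, and the one direction that needs no invertibility (from \eqref{eq:mainA} to fineness) is the same direct computation the paper performs.

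The point to repair is your treatment of that invertibility: both standalone justifications you sketch are flawed. The rank count does not work, because Theorem~\ref{thm:roi} only shows that the transformation carries the $p(n)$ linearly independent descent-distribution vectors of the sets $\SYT(\lambda)$ to the $p(n)$ linearly independent class functions $\chi^\lambda$; this bounds the rank below by $p(n)$, far short of the required $2^{n-1}$ (note also that each function $\alpha\mapsto\chi^\lambda(\alpha)$ depends only on the sorted partition, so these images span only a $p(n)$-dimensional subspace of the target). The triangularity claim is also false: pairing $\alpha$ with $S(\alpha)\sm\{n\}$ does place $+1$ on the diagonal, but already for $\alpha=(2,1,\dots,1)$ every subset of $[n-1]$ is $\alpha$-unimodal, so that row is full while its diagonal entry sits at the column $\{2,3,\dots,n-1\}$, which is neither maximal nor minimal in any order refining size or inclusion; no such order makes the matrix triangular. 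This matrix is genuinely of Walsh--Hadamard type, and its nonsingularity is established in \cite{AR15, A14} by a nontrivial determinant computation (the type $B$ analogue occupies Section~\ref{subsec:weight_matrix} of the present paper and is described there as a tricky computation). So you should simply cite \cite[Theorem~1.4]{AR15} for this step, as the paper does, rather than rely on either sketch.
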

\begin{proof}
Let us express $\chi = \sum_{\lambda \vdash n} c_\lambda
\chi^\lambda$ as a linear combination of irreducible characters.
Using Proposition~\ref{prop:sFexpansion} we find that
\[ \ch(\chi) \ = \ \sum_{\lambda \vdash n} c_\lambda \ch
   (\chi^\lambda) \ = \ \sum_{\lambda \vdash n} c_\lambda
   s_\lambda(x) \ = \ \sum_{\lambda \vdash n} c_\lambda \sum_{Q
   \in \SYT(\lambda)} F_{n, \Des(Q)} (x). \]
From this we conclude that (\ref{eq:mainA}) holds if and only if
  \begin{equation} \label{eq:mainA2}
    |\{b \in \bB: \Des(b) = S\}| \ = \ \sum_{\lambda \vdash n}
    c_\lambda \cdot |\{Q \in \SYT(\lambda): \Des(Q) = S \}|
  \end{equation}
for every $S \subseteq [n-1]$. Assuming (\ref{eq:mainA}) holds and
using (\ref{eq:mainA2}) and Theorem~\ref{thm:roi}, we get

\begin{eqnarray*}
  \chi (\alpha) &=& \sum_{\lambda \vdash n} c_\lambda
  \cdot \chi^\lambda (\alpha) \ = \ \sum_{\lambda \vdash n} c_\lambda
  \sum_{\substack{Q \in \SYT(\lambda) \\ \Des(Q) \in U_\alpha}}
  (-1)^{|\Des(Q) \sm S(\alpha)|} \\
  &=& \sum_{\substack{S \subseteq [n-1] \\ S \in U_\alpha}}
  (-1)^{|S \sm S(\alpha)|} \ \sum_{\lambda \vdash n} c_\lambda \cdot
  |\{Q \in \SYT(\lambda): \Des(Q) = S \}| \\
  & & \\
  &=& \sum_{\substack{S \subseteq [n-1] \\ S \in U_\alpha}}
  (-1)^{|S \sm S(\alpha)|} \cdot |\{b \in \bB: \Des(b) = S \}| \\
  & & \\
  &=& \sum_{b \in \bB_\alpha} (-1)^{|\Des(b) \sm S(\alpha)|}
\end{eqnarray*}
for every $\alpha \models n$, and conclude that $\bB$ is
a fine set for $\chi$.

Conversely, assume that $\bB$ is a fine set for $\chi$ and let
$\bB'$ be the disjoint union, taken over all partitions $\lambda
\vdash n$, of $c_\lambda$ copies of $\SYT(\lambda)$, endowed with
the usual descent map $\Des: \bB' \to 2^{[n-1]}$. By
Theorem~\ref{thm:roi} the set $\bB'$ is also fine for $\chi$.
Therefore, \cite[Theorem~1.4]{AR15} implies that the descent set
is equidistributed over $\bB$ and $\bB'$. This exactly means that
(\ref{eq:mainA2}) holds for every $S \subseteq [n-1]$ and the
proof follows.
\end{proof}

\begin{corollary} \label{cor:mainA} {\rm (see \cite[Theorem~1.5]{AR15})}
Let $\bB$ be a set endowed with a map $\Des: \bB \to 2^{[n-1]}$.
The following are equivalent:
  \begin{enumerate}[(i)]
    \item $\bB$ is a fine set for some (non-virtual) character
               of $\mathfrak{S}_n$.
    \item The quasisymmetric function $F_\bB(x) := \sum_{b \in
                \bB} F_{n, \Des(b)} (x)$ is symmetric and
                Schur-positive.
    \item There exist nonnegative integers $a_\lambda$, for
                 $\lambda \vdash n$, such that
      \begin{equation} \label{eq:desdistrA}
        \sum_{b \in \bB} z^{\Des(b)} \ = \sum_{\lambda \vdash n}
        a_\lambda \sum_{Q \in \SYT(\lambda)} z^{\Des(Q)}
      \end{equation}
     \end{enumerate}
where $z = (z_1,\dots,z_{n-1})$. Moreover, if these conditions
hold, then $a_\lambda = \langle F_\bB(x), s_\lambda(x) \rangle$
for each $\lambda \vdash n$.
\end{corollary}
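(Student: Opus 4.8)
The plan is to deduce Corollary~\ref{cor:mainA} from Theorem~\ref{thm:mainA} together with the already-quoted results of \cite{AR15}, organizing the argument as a cycle of implications. First I would prove (i) $\Rightarrow$ (ii): if $\bB$ is a fine set for some non-virtual character $\chi$, then Theorem~\ref{thm:mainA} gives $F_\bB(x) = \sum_{b \in \bB} F_{n,\Des(b)}(x) = \ch(\chi)$, which lies in $\Lambda^n(x)$ and, since $\chi$ is a genuine character, is a nonnegative integer combination of the $\ch(\chi^\lambda) = s_\lambda(x)$; hence $F_\bB(x)$ is symmetric and Schur-positive.

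Next I would prove (ii) $\Rightarrow$ (iii). Assuming $F_\bB(x)$ is symmetric and Schur-positive, write $F_\bB(x) = \sum_{\lambda \vdash n} a_\lambda s_\lambda(x)$ with $a_\lambda \in \ZZ_{\ge 0}$; by orthonormality of the Schur basis, $a_\lambda = \langle F_\bB(x), s_\lambda(x)\rangle$, which also establishes the final ``Moreover'' clause. Now expand both sides in the fundamental basis using Proposition~\ref{prop:sFexpansion}: the coefficient of $F_{n,S}(x)$ on the left is $|\{b \in \bB : \Des(b) = S\}|$ and on the right is $\sum_\lambda a_\lambda |\{Q \in \SYT(\lambda): \Des(Q) = S\}|$. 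Since the $F_{n,S}(x)$ are linearly independent, these agree for every $S \subseteq [n-1]$, which is exactly the coefficientwise identity of the generating functions $\sum_{b \in \bB} z^{\Des(b)}$ and $\sum_\lambda a_\lambda \sum_{Q \in \SYT(\lambda)} z^{\Des(Q)}$ in~\eqref{eq:desdistrA}.

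Finally I would prove (iii) $\Rightarrow$ (i). Given nonnegative integers $a_\lambda$ satisfying~\eqref{eq:desdistrA}, set $\chi := \sum_{\lambda \vdash n} a_\lambda \chi^\lambda$, a non-virtual character of $\mathfrak{S}_n$. Comparing coefficients of $z^S$ in~\eqref{eq:desdistrA} gives $|\{b \in \bB : \Des(b) = S\}| = \sum_\lambda a_\lambda |\{Q \in \SYT(\lambda) : \Des(Q) = S\}|$ for all $S$; multiplying by $F_{n,S}(x)$ and summing over $S$, then applying Proposition~\ref{prop:sFexpansion}, yields $\sum_{b \in \bB} F_{n,\Des(b)}(x) = \sum_\lambda a_\lambda s_\lambda(x) = \ch(\chi)$. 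Theorem~\ref{thm:mainA} then shows $\bB$ is a fine set for $\chi$, closing the cycle.

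I do not expect a serious obstacle here: the corollary is essentially a repackaging of Theorem~\ref{thm:mainA} via the change of basis between Schur and fundamental quasisymmetric functions, with Schur-positivity of $F_\bB(x)$ playing the role of ``$\chi$ is non-virtual.'' The one point requiring a little care is the bookkeeping in the (ii) $\Rightarrow$ (iii) and (iii) $\Rightarrow$ (i) steps, namely that passing between the symmetric-function identity and the descent-set generating-function identity is legitimate — this rests precisely on the linear independence of $\{F_{n,S}(x) : S \subseteq [n-1]\}$ in $\QSym^n$, which is quoted in Section~\ref{subsec:quasi}. Alternatively, the equivalence of (ii) and (iii) can be cited directly from \cite[Theorem~1.5]{AR15}, but the self-contained derivation above is short enough to include.
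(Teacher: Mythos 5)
Your proposal is correct and follows essentially the route the paper intends: the paper cites \cite[Theorem~1.5]{AR15} for this corollary and proves its $B_n$-analogue (Corollary~\ref{cor:main}) in exactly this way, deducing (i)~$\Leftrightarrow$~(ii) from the main theorem and (ii)~$\Leftrightarrow$~(iii) from the expansion of Schur functions in the fundamental basis together with the linear independence of the $F_{n,S}(x)$. Your organization as a cycle of implications, and your observation that the Schur coefficients of a symmetric $F_\bB(x)$ are automatically integers, are both fine.
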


\subsection{Results for the hyperoctahedral group}
\label{subsec:main_B} To state the $B_n$-analogues of these
results, we will use the language of signed compositions and sets,
explained in Section~\ref{sec:back}. Let $\sigma = (S, \veps) \in
\Sigma^B(n)$ be a signed set, $\gamma \in \Comp^B(n)$ a signed
composition of $n$ with set of partial sums $S(\gamma) = \{r_1,
r_2,\dots,r_k\}$ (so $r_k = n$), and set $r_0 = 0$. The signed set
$\sigma$ is called \emph{$\gamma$-unimodal} if $S$ is unimodal
with respect to the unsigned composition of $n$ corresponding to
$\gamma$.
\begin{definition} \label{def:weightB}
The \emph{weight} $\wei_\gamma(\sigma)$ of $\sigma = (S, \veps)
\in \Sigma^B(n)$ (equivalently, of the signed composition of $n$
corresponding to $\sigma$) with respect to $\gamma$ is defined as
follows:
  \begin{itemize}
    \itemsep=0pt
    \item[$\bullet$]
    $\wei_\gamma(\sigma) := 0$ 
    if either $\sigma$ is not $\gamma$-unimodal, or else
    for some index $1 \le i \le k$ the sign vector $\teps \in \{-, +\}^n$ of $\sigma$
    is not constant on the set $\{r_{i-1} + 1,\dots,r_i\}$.
    \item[$\bullet$]
    Otherwise we set
      \begin{equation} \label{eq:defwei}
        \wei_\gamma (\sigma) \ := \
        (-1)^{|S \sm S(\gamma)| \, + \, n_\gamma(\sigma)}
      \end{equation}
    where $n_\gamma (\sigma)$ is the number of indices $i$ for which
    the elements of $\{r_{i-1} + 1,\dots,r_i\}$ are assigned the negative sign by
    (the sign vectors of) both $\sigma$ and $\gamma$.
  \end{itemize}
\end{definition}

Given a character $\chi$ of $B_n$ and a signed composition $\gamma
\in \Comp^B(n)$, we will write $\chi(\gamma)$ for the value of
$\chi$ at the elements of $B_n$ of signed cycle type $(\alpha,
\beta) \vdash n$, where $\alpha$ and $\beta$ are the partitions
obtained by reordering the unbarred and barred parts of $\gamma$,
respectively, in weakly decreasing order.
\begin{definition} \label{def:fineB}
Let $\chi$ be a character of the hyperoctahedral group $B_n$. A
set $\bB$, endowed with a map $\cDes: \bB \to \Sigma^B(n)$, is
said to be a \emph{fine set} for $\chi$ if
  \begin{equation} \label{eq:fineB}
    \chi(\gamma) \ = \ \sum_{b \in \bB} \wei_\gamma (\cDes(b))
  \end{equation}
for every signed composition $\gamma$ of $n$.
\end{definition}

The main results of this paper are as follows.

\begin{theorem} \label{thm:main}
Let $\chi$ be a character of the hyperoctahedral group $B_n$ and
$\bB$ a set endowed with a map $\cDes: \bB \to \Sigma^B(n)$. Then
$\bB$ is a fine set for $\chi$ if and only if
  \begin{equation} \label{eq:main}
    \ch(\chi) \ = \ \sum_{b \in \bB} F_{\cDes(b)} (x, y).
  \end{equation}
In particular, the distribution of the signed descent set over
$\bB$ is uniquely determined by $\chi$.
\end{theorem}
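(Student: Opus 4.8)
The plan is to mirror the structure of the proof of Theorem~\ref{thm:mainA}, replacing the ingredients of type $A$ by their type $B$ counterparts. First I would expand $\chi = \sum_{(\lambda,\mu) \vdash n} c_{\lambda,\mu} \chi^{\lambda,\mu}$ in the basis of irreducible $B_n$-characters. Applying the Frobenius characteristic and using the fact that $\ch(\chi^{\lambda,\mu}) = s_\lambda(x) s_\mu(y)$, the right side of \eqref{eq:main} comes down to knowing the expansion of $s_\lambda(x) s_\mu(y)$ in the Poirier basis $\{F_\sigma(x,y)\}$. The expected (and presumably already available, via the $B_n$ Robinson--Schensted correspondence and the defining properties of $\cDes$ on bitableaux) identity is
\[
  s_\lambda(x)\, s_\mu(y) \ = \ \sum_{Q \in \SYT(\lambda,\mu)} F_{\cDes(Q)}(x,y),
\]
so that \eqref{eq:main} is equivalent to the statement that, for every signed set $\sigma \in \Sigma^B(n)$,
\[
  |\{b \in \bB : \cDes(b) = \sigma\}| \ = \ \sum_{(\lambda,\mu) \vdash n} c_{\lambda,\mu} \cdot |\{Q \in \SYT(\lambda,\mu) : \cDes(Q) = \sigma\}|,
\]
i.e.\ that the signed descent set is equidistributed over $\bB$ and over the ``multiplicity'' set $\bB' := \bigsqcup_{(\lambda,\mu)} c_{\lambda,\mu} \cdot \SYT(\lambda,\mu)$.

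Second, I would establish that $\bB'$ itself is a fine set for $\chi$ in the sense of Definition~\ref{def:fineB}; this is precisely the content of the $B_n$-analogue of Roichman's rule, Theorem~\ref{thm:roiB}, which asserts
\[
  \chi^{\lambda,\mu}(\gamma) \ = \ \sum_{Q \in \SYT(\lambda,\mu)} \wei_\gamma(\cDes(Q))
\]
and which the paper proves in Section~\ref{sec:proof}. Granting this, the ``backward'' direction of Theorem~\ref{thm:main} is immediate: if \eqref{eq:main} holds, then the signed-descent distributions of $\bB$ and $\bB'$ coincide by the displayed equidistribution statement above, and since $\wei_\gamma(\cdot)$ depends on its argument only through the signed set $\cDes(b)$, summing over $\bB$ gives the same answer as summing over $\bB'$, which equals $\chi(\gamma)$ by linearity in the $c_{\lambda,\mu}$. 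The ``forward'' direction, that fineness implies \eqref{eq:main}, is where the real work lies: here one knows the left-hand sides of \eqref{eq:fineB} for \emph{all} signed compositions $\gamma$ and must recover the signed-descent distribution of $\bB$, equivalently the integers $|\{b \in \bB : \cDes(b) = \sigma\}|$.

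The main obstacle, therefore, is an invertibility (linear-algebra) statement: the system expressing the numbers $\chi(\gamma)$, for $\gamma \in \Comp^B(n)$, in terms of the unknowns $m_\sigma := |\{b \in \bB : \cDes(b) = \sigma\}|$, $\sigma \in \Sigma^B(n)$, has a matrix whose entries are the weights $\wei_\gamma(\sigma)$, and one must show this matrix is invertible. This is exactly the ``new family of matrices of Walsh--Hadamard type'' advertised in the introduction, and the invertibility is to be proved (per the introduction) by an explicit determinant computation in Section~\ref{subsec:weight_matrix}. Once invertibility is in hand, fineness of $\bB$ forces its signed-descent distribution to equal that of $\bB'$ (which is fine for $\chi$ by Theorem~\ref{thm:roiB} and is built to have the ``right'' distribution), hence \eqref{eq:main} holds; and the final ``uniqueness'' clause is then automatic, since \eqref{eq:main} determines the multiset $\{\cDes(b) : b \in \bB\}$ via the linear independence of the $F_\sigma(x,y)$ over $\CC$ \cite[Corollary~9]{Poi98}. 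I would organize the write-up so that the bitableau identity for $s_\lambda(x)s_\mu(y)$ and the determinant/invertibility lemma are quoted as the two black boxes, with the rest being the bookkeeping sketched above.
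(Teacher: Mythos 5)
Your proposal is correct and follows essentially the same route as the paper: the backward direction via the bitableau expansion $s_\lambda(x)s_\mu(y)=\sum_{Q\in\SYT(\lambda,\mu)}F_{\cDes(Q)}(x,y)$ (Proposition~\ref{prop:schurprod}) together with Theorem~\ref{thm:roiB}, and the forward direction by comparing $\bB$ with the multiset $\bB'$ of bitableaux and invoking the invertibility of the $d_n\times d_n$ weight matrix, which the paper establishes by the determinant computation of Section~\ref{subsec:weight_matrix}. The two ``black boxes'' you isolate are exactly the two auxiliary results the paper proves, and your bookkeeping matches Proposition~\ref{prop:main1} and the proof of Theorem~\ref{thm:char_to_cDes}.
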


To state the third condition of the following corollary, given
$\sigma = (S, \veps) \in \Sigma^B(n)$ we write $z^\sigma =
\prod_{i \in S} z_i$, where $z_i = x_i$ (respectively, $z_i =
y_i$) if $\veps_i = +$ (respectively, $\veps_i = -$).

\begin{corollary} \label{cor:main}
Let $\bB$ be a set endowed with a map $\cDes: \bB \to
\Sigma^B(n)$. The following are equivalent:
  \begin{enumerate}[(i)]
     \item
        $\bB$ is a fine set for some (non-virtual) character of $B_n$.
    \item
        The quasisymmetric function $F_\bB(x, y) := \sum_{b \in \bB} F_{\cDes(b)} (x,y)$
        is a Schur-positive element of $\Lambda(x) \otimes \Lambda(y)$.
    \item
        There exist nonnegative integers $a_{\lambda, \mu}$, for $(\lambda, \mu) \vdash n$,
        such that
      \begin{equation} \label{eq:desdistrB}
        \sum_{b \in \bB} z^{\cDes(b)} \ = \
        \sum_{(\lambda, \mu) \vdash n} a_{\lambda, \mu}
        \sum_{Q \in \SYT(\lambda, \mu)} z^{\cDes(Q)}.
      \end{equation}
  \end{enumerate}
Moreover, if these conditions hold, then $a_{\lambda, \mu} =
\langle F_\bB(x, y), s_\lambda(x) s_\mu(y) \rangle$ for each
$(\lambda, \mu) \vdash n$.
\end{corollary}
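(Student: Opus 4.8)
The plan is to prove Corollary~\ref{cor:main} by deducing it from Theorem~\ref{thm:main} together with the representation-theoretic meaning of Schur-positivity, in close analogy with the proof of Corollary~\ref{cor:mainA} in~\cite{AR15}. First I would establish the implication (i) $\Rightarrow$ (ii): if $\bB$ is a fine set for a non-virtual character $\chi$ of $B_n$, then Theorem~\ref{thm:main} gives $F_\bB(x,y) = \sum_{b \in \bB} F_{\cDes(b)}(x,y) = \ch(\chi)$, and since $\chi$ is non-virtual it is a nonnegative integer combination of the irreducible characters $\chi^{\lambda,\mu}$, whence $\ch(\chi) = \sum_{(\lambda,\mu) \vdash n} c_{\lambda,\mu}\, s_\lambda(x) s_\mu(y)$ with all $c_{\lambda,\mu} \ge 0$; that is exactly Schur-positivity of $F_\bB(x,y)$ in $\Lambda(x) \otimes \Lambda(y)$.

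Next I would prove (ii) $\Rightarrow$ (i): suppose $F_\bB(x,y)$ is Schur-positive, say $F_\bB(x,y) = \sum_{(\lambda,\mu)} a_{\lambda,\mu}\, s_\lambda(x) s_\mu(y)$ with $a_{\lambda,\mu} \in \ZZ_{\ge 0}$. Then $\chi := \sum_{(\lambda,\mu)} a_{\lambda,\mu}\, \chi^{\lambda,\mu}$ is a genuine (non-virtual) character of $B_n$ with $\ch(\chi) = F_\bB(x,y) = \sum_{b \in \bB} F_{\cDes(b)}(x,y)$, so Theorem~\ref{thm:main} (the ``if'' direction) shows that $\bB$ is a fine set for $\chi$. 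This closes the loop between (i) and (ii), and along the way identifies $a_{\lambda,\mu} = \langle F_\bB(x,y), s_\lambda(x) s_\mu(y)\rangle$, which is the ``moreover'' clause, since the $s_\lambda(x) s_\mu(y)$ form an orthonormal basis.

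It remains to connect (iii) with (i)--(ii). The key observation is that, by Theorem~\ref{thm:main} applied to the disjoint union over all bipartitions $(\lambda,\mu)$ of $a_{\lambda,\mu}$ copies of $\SYT(\lambda,\mu)$ (which, via $\ch(\chi^{\lambda,\mu}) = s_\lambda(x) s_\mu(y)$ and the bitableau analogue of Proposition~\ref{prop:sFexpansion}, is a fine set for $\sum a_{\lambda,\mu} \chi^{\lambda,\mu}$), the generating function identity in (iii) is equivalent to saying that the signed descent set $\cDes$ is equidistributed over $\bB$ and over this disjoint union. Reading off coefficients of the monomial $z^\sigma$ on both sides of~\eqref{eq:desdistrB}, equidistribution says precisely that $|\{b \in \bB : \cDes(b) = \sigma\}| = \sum_{(\lambda,\mu)} a_{\lambda,\mu}\, |\{Q \in \SYT(\lambda,\mu) : \cDes(Q) = \sigma\}|$ for every $\sigma \in \Sigma^B(n)$; since the functions $F_\sigma(x,y)$ are linearly independent (\cite[Corollary~9]{Poi98}), this holds if and only if $\sum_{b \in \bB} F_{\cDes(b)}(x,y) = \sum_{(\lambda,\mu)} a_{\lambda,\mu} \sum_{Q \in \SYT(\lambda,\mu)} F_{\cDes(Q)}(x,y) = \sum_{(\lambda,\mu)} a_{\lambda,\mu}\, s_\lambda(x) s_\mu(y)$, i.e.\ $F_\bB(x,y)$ is Schur-positive with the prescribed coefficients. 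Thus (iii) $\Leftrightarrow$ (ii), completing the cycle.

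The substantive content here is entirely carried by Theorem~\ref{thm:main} and by the linear independence of Poirier's functions $F_\sigma(x,y)$; the rest is bookkeeping. The step I expect to require the most care is verifying that the bitableau version of the Schur expansion holds, namely $s_\lambda(x) s_\mu(y) = \sum_{Q \in \SYT(\lambda,\mu)} F_{\cDes(Q)}(x,y)$, so that $\SYT(\lambda,\mu)$ is indeed a fine set for $\chi^{\lambda,\mu}$ --- this is needed to exhibit the concrete fine set appearing on the right-hand side of~\eqref{eq:desdistrB}. This identity should follow by combining the classical product formula $s_\lambda(x) s_\mu(y) = \big(\sum_{P \in \SYT(\lambda)} F_{|\lambda|,\Des(P)}(x)\big)\big(\sum_{R \in \SYT(\mu)} F_{|\mu|,\Des(R)}(y)\big)$ with the shuffle description of $\SYT(\lambda,\mu)$ as interleavings of $P$ and $R$, tracking how $\cDes$ of the interleaving records descents within each tableau together with the positions where one switches between $Q^+$ and $Q^-$; alternatively it is a direct instance of Theorem~\ref{thm:main} once the $B_n$-analogue of Roichman's rule (Theorem~\ref{thm:roiB}) is in hand. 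Either way, no genuine obstacle arises beyond careful tracking of signs and sign vectors.
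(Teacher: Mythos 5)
Your proposal is correct and follows essentially the same route as the paper: the equivalence (i) $\Leftrightarrow$ (ii) is read off from Theorem~\ref{thm:main} together with the fact that non-virtual characters are exactly those whose Frobenius characteristic is Schur-positive, and (ii) $\Leftrightarrow$ (iii) is obtained by comparing coefficients via Proposition~\ref{prop:schurprod} and the linear independence of Poirier's functions $F_\sigma(x,y)$, with the ``moreover'' clause coming from orthonormality of the $s_\lambda(x)s_\mu(y)$. The paper's proof is just a terser version of the same argument.
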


\section{Proofs}
\label{sec:proof}

This section proves Theorem~\ref{thm:main} and deduces
Corollary~\ref{cor:main} from it. The proof of one direction of
Theorem~\ref{thm:main} follows that of Theorem~\ref{thm:mainA}, as
described in Section~\ref{sec:main}; it is based on a formula for
the values of the irreducible characters of $B_n$ (see
Theorem~\ref{thm:roiB}) which is analogous to Roichman's rule for
the irreducible characters of $\mathfrak{S}_n$
(Theorem~\ref{thm:roi}). The challenge of finding analogues of
Roichman's rule for other (complex) reflection groups was
implicitly raised in~\cite[page~45]{BR99}. The proof of the other
direction of Theorem~\ref{thm:main} requires the invertibility of
a suitable weight matrix.

\subsection{Irreducible characters and bitableaux}
\label{subsec:irr_char_tab} Given a composition $\alpha$, denote
by $\alpha^+$ (respectively, $\alpha^-$) the signed composition
obtained from $\alpha$ by considering its parts as unbarred
(respectively, barred). The following result states that the set
of standard Young bitableaux of shape $(\lambda, \mu) \vdash n$,
endowed with the standard colored descent map defined in
Section~\ref{subsec:perm}, is a fine set for the irreducible
chararcter $\chi^{\lambda, \mu}$ of $B_n$.

\begin{theorem} \label{thm:roiB}
For all bipartitions $(\lambda, \mu) \vdash n$ and $(\alpha,
\beta) \vdash n$ and for every permutation $\gamma$ of the parts
of $\alpha^+$ and $\beta^-$,
  \begin{equation} \label{eq:chiroiB}
    \chi^{\lambda, \mu} (\alpha, \beta) \ = \sum_{Q \in \SYT(\lambda,
    \mu)} \wei_\gamma (\cDes(Q)).
  \end{equation}
In other words, $\SYT(\lambda, \mu)$ (with the associated map
$\cDes$) is a fine set for the irreducible character
$\chi^{\lambda, \mu}$.
\end{theorem}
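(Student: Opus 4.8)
The plan is to reduce Theorem~\ref{thm:roiB} to the classical Frobenius formula \eqref{eq:FrobB} by expanding the products $p^+_\alpha(x,y)\,p^-_\beta(x,y)$ in the Poirier basis $\{F_\sigma(x,y)\}$ and matching coefficients against the Schur expansion $\sum_{(\lambda,\mu)} \chi^{\lambda,\mu}(\alpha,\beta)\, s_\lambda(x)s_\mu(y)$. Concretely, first I would establish a signed analogue of Proposition~\ref{prop:sFexpansion}, namely that for every bipartition $(\lambda,\mu)\vdash n$,
\[
  s_\lambda(x)\,s_\mu(y) \ = \ \sum_{Q\in\SYT(\lambda,\mu)} F_{\cDes(Q)}(x,y).
\]
This should follow from the ordinary $\SYT$ expansion applied separately to $s_\lambda(x)$ and $s_\mu(y)$, together with a standard shuffle/quasi-shuffle argument: a bitableau $Q=(Q^+,Q^-)$ of content $[n]$ is the same data as a pair of standard tableaux of shapes $\lambda,\mu$ on complementary letter sets, and the definition of $\cDes(Q)$ (its $\wDes$ records the within-block descents and its sign vector records which tableau each letter lies in) is exactly engineered so that the product of the two fundamental quasisymmetrics $F_{n',\Des(Q^+)}(x)$ and $F_{n'',\Des(Q^-)}(y)$, summed over all interleavings, collapses to $\sum_Q F_{\cDes(Q)}(x,y)$. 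I would verify this by comparing monomials on both sides, using \eqref{eq:defF(xy)}.

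Next I would compute, for a fixed signed composition $\gamma$ with unsigned partial sums $S(\gamma)=\{r_1,\dots,r_k\}$, the expansion of $p^+_\alpha(x,y)\,p^-_\beta(x,y)$ in the $F_\sigma(x,y)$ basis, where $\alpha,\beta$ are the reorderings of the unbarred and barred parts of $\gamma$. The point is that this product depends only on the multiset of (signed) parts, hence equals the corresponding ordered product $p^{\teps_\gamma(r_1)}_{\gamma_1}\cdots p^{\teps_\gamma(r_k)}_{\gamma_k}$ dictated by $\gamma$. Expanding each $p^\pm_{m}=\sum_i(x_i^m\pm y_i^m)$ and collecting, one finds that the coefficient of $F_\sigma(x,y)$ is precisely $\wei_\gamma(\sigma)$ as in Definition~\ref{def:weightB}: the condition that $\sigma$ be $\gamma$-unimodal with sign vector constant on each block $\{r_{i-1}+1,\dots,r_i\}$ is exactly the support condition for $F_\sigma$ to appear, the factor $(-1)^{|S\sm S(\gamma)|}$ comes from the unimodality/prefix bookkeeping (just as in the unsigned Roichman rule, cf.\ Theorem~\ref{thm:roi}), and the extra factor $(-1)^{n_\gamma(\sigma)}$ records the sign $-1$ picked up from each block where both $\gamma$ and $\sigma$ choose the $y$-variables, i.e.\ from the $-y_i^m$ terms of the relevant $p^-$ factors. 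This is the step where I would be careful: getting the two sign contributions to separate cleanly, and checking the support/vanishing conditions match Definition~\ref{def:weightB} exactly, is the combinatorial heart of the argument, and is the analogue of the trickiest part of \cite{Roi97, Ram98}.

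Finally I would combine the two computations. Substituting the signed $\SYT$ expansion into \eqref{eq:FrobB} gives
\[
  \sum_\sigma \Big(\sum_{(\lambda,\mu)\vdash n} \chi^{\lambda,\mu}(\alpha,\beta)
  \, \big|\{Q\in\SYT(\lambda,\mu):\cDes(Q)=\sigma\}\big|\Big) F_\sigma(x,y)
  \ = \ p^+_\alpha\, p^-_\beta \ = \ \sum_\sigma \wei_\gamma(\sigma)\, F_\sigma(x,y),
\]
and by linear independence of the $F_\sigma$ (\cite[Corollary~9]{Poi98}) the coefficients agree:
\[
  \sum_{(\lambda,\mu)\vdash n} \chi^{\lambda,\mu}(\alpha,\beta)
  \cdot \big|\{Q\in\SYT(\lambda,\mu):\cDes(Q)=\sigma\}\big| \ = \ \wei_\gamma(\sigma)
  \qquad\text{for all }\sigma\in\Sigma^B(n).
\]
Now fix $(\lambda,\mu)$, multiply by the indicator and sum over $\sigma$ — or, more directly, apply $\langle\,\cdot\,,s_\lambda(x)s_\mu(y)\rangle$ to the identity $p^+_\alpha p^-_\beta=\sum_\sigma \wei_\gamma(\sigma)F_\sigma(x,y)$ after re-expanding each $F_\sigma$ via the signed $\SYT$ identity and orthonormality of the $s_\lambda(x)s_\mu(y)$ — to obtain
\[
  \chi^{\lambda,\mu}(\alpha,\beta) \ = \ \sum_{Q\in\SYT(\lambda,\mu)} \wei_\gamma(\cDes(Q)),
\]
which is \eqref{eq:chiroiB}. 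The last clause of the theorem (that $\SYT(\lambda,\mu)$ is a fine set for $\chi^{\lambda,\mu}$) is then immediate from Definition~\ref{def:fineB}, since $\gamma$ was an arbitrary permutation of the parts of $\alpha^+$ and $\beta^-$ and every signed composition arises this way.
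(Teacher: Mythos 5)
Your overall strategy --- prove the quasisymmetric expansion $s_\lambda(x)s_\mu(y)=\sum_{Q}F_{\cDes(Q)}(x,y)$ (which is indeed Proposition~\ref{prop:schurprod} of the paper and your shuffle argument for it is fine), then expand $p^+_\alpha p^-_\beta$ in the Poirier basis and match coefficients --- contains a fatal error at its central step. The claim that the coefficient of $F_\sigma(x,y)$ in $p^+_\alpha(x,y)p^-_\beta(x,y)$ equals $\wei_\gamma(\sigma)$ is false, already in the unsigned shadow of the problem. Take $n=3$ and $\alpha=(3)$: one checks by inclusion--exclusion that
\[
p_3(x) \ = \ F_{3,\varnothing}(x) - F_{3,\{1\}}(x) - F_{3,\{2\}}(x) + F_{3,\{1,2\}}(x),
\]
so the coefficient of $F_{3,\{2\}}$ is $-1$, whereas the Roichman weight of $\{2\}$ with respect to $(3)$ is $0$ because $\{2\}$ is not $(3)$-unimodal. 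The source of the confusion is a duality mix-up: the correct statement is that $p_\alpha=\sum_S \wei_\alpha(S)\,r_S$ where $r_S$ is the \emph{ribbon Schur function} indexed by $S$, since $\langle r_S,s_\lambda\rangle=|\{Q\in\SYT(\lambda):\Des(Q)=S\}|$; the ribbon functions are dual to the fundamental quasisymmetric functions under the $\QSym$/NSym pairing, they are not the same basis, and the transition matrix $\bigl(|\{Q\in\SYT(\lambda):\Des(Q)=S\}|\bigr)$ is not orthogonal. For the same reason your closing step is invalid in both of its formulations: ``multiplying by the indicator and summing over $\sigma$'' applied to the identity $\sum_{(\lambda,\mu)}\chi^{\lambda,\mu}(\alpha,\beta)\,N_{\lambda,\mu}(\sigma)=\wei_\gamma(\sigma)$ (with $N_{\lambda,\mu}(\sigma)=|\{Q:\cDes(Q)=\sigma\}|$) produces $NN^{T}\chi$, not $\chi$; and one cannot ``apply $\langle\,\cdot\,,s_\lambda(x)s_\mu(y)\rangle$ after re-expanding each $F_\sigma$,'' because the $F_\sigma$ are not symmetric and do not lie in the span of the $s_\lambda(x)s_\mu(y)$. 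In short, the theorem ($\chi=Nw$) and the $F$-expansion you aim for ($N^{T}\chi=w$) are transposed statements, and neither follows from the other by formal linear algebra.

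The paper avoids this entirely by never leaving the level of character values. It expands
\[
p^+_\alpha(x,y)\,p^-_\beta(x,y)\ =\ \sum_{\veps,\zeta}(-1)^{|\{j:\,\zeta_j=-\}|}\,p_{\nu^{\veps,\zeta}}(x)\,p_{\xi^{\veps,\zeta}}(y)
\]
over sign vectors assigning each part of $\alpha$ and $\beta$ to the $x$- or $y$-alphabet, applies the type $A$ Frobenius formula to each factor to get $\chi^{\lambda,\mu}(\alpha,\beta)=\sum_{\veps,\zeta}(-1)^{|\{j:\,\zeta_j=-\}|}\chi^\lambda(\nu^{\veps,\zeta})\chi^\mu(\xi^{\veps,\zeta})$, and then observes that the vanishing conditions in Definition~\ref{def:weightB} force the sign vector of any contributing $\cDes(Q)$ to be constant on the blocks of $\gamma$, so that $\sum_Q\wei_\gamma(\cDes(Q))$ splits over the same sign-vector choices into products of two type~$A$ sums, each evaluated by Theorem~\ref{thm:roi}. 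If you want to repair your argument you would need to replace your $F$-expansion of $p^+_\alpha p^-_\beta$ by a (type $B$) ribbon expansion and prove the corresponding duality with $s_\lambda(x)s_\mu(y)$ --- at which point you would essentially be reproving the type $A$ rule rather than quoting it.
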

\begin{proof}
To compute $\chi^{\lambda, \mu} (\alpha, \beta)$ we expand the
left-hand side of (\ref{eq:FrobB}) as

\begin{eqnarray*}
  p^+_\alpha (x, y) p^-_\beta (x, y) &=& \prod_{i=1}^{\ell(\alpha)}
  \, (p_{\alpha_i}(x) + p_{\alpha_i}(y)) \, \prod_{j=1}^{\ell(\beta)}
  \, (p_{\beta_j}(x) - p_{\beta_j}(y)) \\
  & & \\
  &=& \sum_{\varepsilon, \zeta} \,
  (-1)^{|\{j: \, \zeta_j = -\}|} \, p_\nu (x) p_\xi (y)
\end{eqnarray*}
where the sum ranges over all the vectors $\veps = (\veps_1,
\ldots, \veps_{\ell(\alpha)}) \in \{-, +\}^{\ell(\alpha)}$ and
$\zeta = (\zeta_1, \ldots, \zeta_{\ell(\beta)}) \in \{-,
+\}^{\ell(\beta)}$, and where $\nu = \nu^{\veps, \zeta}$
(respectively, $\xi = \xi^{\veps, \zeta}$) is the composition
consisting of the parts $\alpha_i$ of $\alpha$ with $\veps_i = +$
(respectively, $\veps_i = -$) followed by the parts $\beta_j$ of
$\beta$ with $\zeta_j = +$ (respectively, $\zeta_j = -$).
Expressing each of $p_\nu (x)$ and $p_\xi (y)$ in the basis of
Schur functions, according to (\ref{eq:FrobA}), and comparing to
(\ref{eq:FrobB}) we get

 \begin{equation} \label{eq:chiroiB1}
   \chi^{\lambda, \mu} (\alpha, \beta) \ = \
   \sum_{\veps, \zeta} \, (-1)^{|\{j: \, \zeta_j = -\}|} \,
   \chi^\lambda (\nu^{\veps, \zeta}) \chi^\mu (\xi^{\veps, \zeta})
 \end{equation}
where the summation and the compositions $\nu^{\veps, \zeta}$ and
$\xi^{\veps, \zeta}$ determined by the sign vectors $\veps$ and
$\zeta$ are as before.

We now derive a similar formula for the right-hand side of
(\ref{eq:chiroiB}), denoted by $\psi^{\lambda, \mu} (\gamma)$.
Given sign vectors $\veps = (\veps_1, \ldots,
\veps_{\ell(\alpha)}) \in \{-, +\}^{\ell(\alpha)}$ and $\zeta =
(\zeta_1, \ldots, \zeta_{\ell(\beta)}) \in \{-,
+\}^{\ell(\beta)}$, denote by $\gamma^{\veps, \zeta}$ the signed
composition whose underlying composition is that of $\gamma$ and
whose parts are unbarred or barred, according to whether the
corresponding parts of $\alpha$ and $\beta$ are assigned the $+$
or $-$ sign by $\veps$ and $\zeta$, respectively. Denote by
$\gamma_+^{\veps, \zeta}$ (respectively, $\gamma_-^{\veps,
\zeta}$) the composition obtained from $\gamma^{\veps, \zeta}$ by
removing the barred (respectively, unbarred) parts and forgetting
the bars. Express the set $[n]$ as the disjoint union of
contiguous segments whose cardinalities are the parts of $\gamma$,
and denote by $R_+^{\veps, \zeta}$ (respectively, $R_-^{\veps,
\zeta}$) the union of those segments which correspond to the
unbarred (respectively, barred) parts of $\gamma^{\veps, \zeta}$.
The definitions of weight and signed descent set show that
\begin{multline*}
\psi^{\lambda, \mu} (\gamma) \\ =  \sum_{\veps, \zeta} \,
(-1)^{|\{j: \, \zeta_j = -\}|} \left( \, \sum_{Q^+} \,
(-1)^{|\Des(Q^+) \sm S(\gamma_+^{\veps, \zeta})|} \right) \left(
\, \sum_{Q^-} \, (-1)^{|\Des(Q^-) \sm S(\gamma_-^{\veps, \zeta})|}
\right)
\end{multline*}
where the outer sum ranges over all sign vectors $\veps,\zeta$ as
above and the inner sums range over all Young tableaux $Q^+$ of
shape $\lambda$ with content $R_+^{\veps, \zeta}$ and
$\gamma_+^{\veps, \zeta}$-unimodal descent set and over all Young
tableaux $Q^-$ of shape $\mu$ with content $R_-^{\veps, \zeta}$
and $\gamma_-^{\veps, \zeta}$-unimodal descent set.
Theorem~\ref{thm:roi} and the previous formula imply that

\begin{equation} \label{eq:chiroiB2}
\psi^{\lambda, \mu} (\gamma) \ = \ \sum_{\veps, \zeta} \,
(-1)^{|\{j: \, \zeta_j = -\}|} \, \chi^\lambda(\gamma_+^{\veps,
\zeta}) \chi^\mu(\gamma_-^{\veps, \zeta})
\end{equation}
where the sum ranges over all sign vectors $\veps \in \{-,
+\}^{\ell(\alpha)}$ and $\zeta \in \{-, +\}^{\ell(\beta)}$. Since
the values $\chi(\nu)$ of an irreducible character $\chi$ of a
symmetric group do not depend on the ordering of the parts of the
composition $\nu$, Equations (\ref{eq:chiroiB1}) and
(\ref{eq:chiroiB2}) imply that $\chi^{\lambda, \mu} (\alpha,
\beta) = \psi^{\lambda, \mu} (\gamma)$, as claimed by the theorem.
\end{proof}

\begin{proposition} \label{prop:schurprod}
For all partitions $\lambda, \mu$
\[
    s_\lambda (x) s_\mu (y) \ = \sum_{Q \in \SYT(\lambda, \mu)}
    F_{\cDes(Q)} (x, y).
\]
\end{proposition}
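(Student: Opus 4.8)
The plan is to derive this identity from Theorem~\ref{thm:roiB} (the $B_n$-analogue of Roichman's rule) exactly as Proposition~\ref{prop:sFexpansion} would follow from Theorem~\ref{thm:roi}, or alternatively to reduce it to the classical type $A$ expansion in Proposition~\ref{prop:sFexpansion}. The cleanest route is the latter: I would expand the left-hand side by applying Proposition~\ref{prop:sFexpansion} separately to $s_\lambda(x)$ and $s_\mu(y)$, writing
\[
    s_\lambda(x) s_\mu(y) \ = \ \sum_{P \in \SYT(\lambda)} \sum_{R \in \SYT(\mu)}
    F_{k, \Des(P)}(x) \, F_{n-k, \Des(R)}(y),
\]
where $k = |\lambda|$, and then recognizing each product $F_{k, \Des(P)}(x) F_{n-k, \Des(R)}(y)$ as a sum of Poirier functions $F_{\cDes(Q)}(x,y)$ over an appropriate family of bitableaux $Q$.

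\textbf{Key steps.} First I would fix $P \in \SYT(\lambda)$ and $R \in \SYT(\mu)$ and observe that choosing a bitableau $Q = (Q^+, Q^-) \in \SYT(\lambda,\mu)$ whose "standardizations" recover $P$ and $R$ is the same as choosing a subset $T \subseteq [n]$ of size $k$ to serve as the content of $Q^+$ (the relative order of entries in $Q^+$ and in $Q^-$ is then forced to match $P$ and $R$). Second, I would show that for such a $Q$, the defining generating function $F_{\cDes(Q)}(x,y)$ of \eqref{eq:defF(xy)} factors as a product of an $x$-monomial sum governed by $\Des(P)$ and a $y$-monomial sum governed by $\Des(R)$: indeed, the sign vector $\teps$ of $\cDes(Q)$ is precisely the indicator of $T$ (entries in $Q^+$ get $+$, entries in $Q^-$ get $-$), so the $x$-variables and $y$-variables in a monomial of $F_{\cDes(Q)}(x,y)$ are indexed by disjoint blocks of positions, and the strict-inequality conditions coming from $\wDes(\cDes(Q))$ restrict, within the $x$-block, exactly to $\Des(P)$ and within the $y$-block exactly to $\Des(R)$ (with no forced inequality across the blocks beyond weak monotonicity, which is already present). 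Third, summing $F_{\cDes(Q)}(x,y)$ over all $2^n / \binom{\cdot}{\cdot}$—more precisely over all $T \subseteq [n]$ with $|T| = k$, equivalently over all ways to shuffle the two increasing sequences—reconstitutes the unrestricted product $F_{k,\Des(P)}(x) F_{n-k,\Des(R)}(y)$, because summing a quasisymmetric generating function over all interleavings of its two halves is exactly the product operation on (quasi)symmetric functions. Summing over $P$ and $R$ then yields the claim.

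\textbf{Main obstacle.} The routine but delicate point is the second step: verifying that $\wDes(\cDes(Q))$, restricted to the positions occupied by $Q^+$-entries, is carried bijectively onto $\Des(P)$ (and likewise for $Q^-$ and $\Des(R)$), and that the descent-set bookkeeping in Definition~\ref{cDes}'s bitableau analogue—where a descent is recorded either for a genuine row descent within one tableau or for a sign change between the two—interacts correctly with the note after that definition stating $\wDes(\cDes(Q)) = \Des(Q)$ in the sequence sense. One must check that the "sign change" positions, which lie in $S$ but contribute no strict inequality via $\wDes$, are exactly the spots where the shuffle passes from an $x$-variable to a $y$-variable or back, so they impose only the weak monotonicity already built into \eqref{eq:defF(xy)}. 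Once this local analysis is in hand, the remaining manipulations are formal. I would also remark that, alternatively, the proposition is immediate from Theorem~\ref{thm:roiB} together with the already-proved Theorem~\ref{thm:main}, since Theorem~\ref{thm:roiB} identifies $\SYT(\lambda,\mu)$ as a fine set for $\chi^{\lambda,\mu}$ and $\ch(\chi^{\lambda,\mu}) = s_\lambda(x) s_\mu(y)$; but the direct argument above is more self-contained and does not presuppose Theorem~\ref{thm:main}.
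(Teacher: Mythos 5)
Your route is essentially the paper's primary one: the paper proves this proposition by combining Proposition~\ref{prop:sFexpansion} with \cite[Corollary~8]{HP10}, and the HP10 ingredient is precisely the shuffle--product identity you set out to verify in your second and third steps (the paper also sketches an alternative single-shot bijection from pairs of semistandard tableaux of shapes $\lambda$ and $\mu$ to pairs $(Q,u)$ with $u$ a monomial of $F_{\cDes(Q)}(x,y)$, which merges your two steps into one standardization map). One local claim in your write-up is wrong and, taken literally, would derail the count: it is not true that $\wDes(\cDes(Q))$ imposes ``no forced inequality across the blocks.'' If $j\in Q^+$ and $j+1\in Q^-$ then $j\in\wDes(\cDes(Q))$ (this is the case $\veps(s_i)=+$, $\veps(s_{i+1})=-$ in Definition~\ref{def:wDes}), so $i_j<i_{j+1}$ is forced; only the transitions from $Q^-$ to $Q^+$ are weak. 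Consequently a single $F_{\cDes(Q)}(x,y)$ does not factor as an $x$-part times a $y$-part, and if your parenthetical were correct then each of the $\binom{n}{k}$ bitableaux lying over a fixed pair $(P,R)$ would contribute the full product $F_{k,\Des(P)}(x)\,F_{n-k,\Des(R)}(y)$, overcounting by $\binom{n}{k}$. The correct mechanism is exactly this asymmetry: a chain $i_1\le\cdots\le i_n$ realizing a given pair of $x$- and $y$-chains determines the interleaving $T$ uniquely, because ties between adjacent $x$- and $y$-indices must be broken with the $y$-position first ($-\to+$ weak, $+\to-$ strict), so the $\binom{n}{k}$ terms partition the monomial chains of the product rather than each reproducing them. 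With that correction your step~3 is the standard $P$-partition shuffle lemma and the argument goes through.
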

\begin{proof}
This statement follows from \cite[Corollary~8]{HP10} and
Proposition~\ref{prop:sFexpansion}. For a direct proof, it
suffices to describe a bijection from the set of all pairs of
semistandard Young tableaux $S$ and $T$ of shape $\lambda$ and
$\mu$, respectively, to the set of all pairs $(Q, u)$ of standard
Young bitableau $Q \in \SYT(\lambda, \mu)$ and monomials $u$ which
appear in the expansion of $F_{\cDes(Q)} (x, y)$, such that if
$(Q, u)$ corresponds to $(S, T)$ then $x^S y^T = u$. Given $(S,
T)$, we define $Q$ by numbering the entries of $T$ equal to 1,
read from bottom to top and from left to right, with the first few
positive integers $1, 2,\dots$; then the entries of $S$ equal to
1, read in the same fashion, with the next few positive integers;
then the entries of $T$ equal to 2 and so on, and set $u = x^S
y^T$. We leave to the interested reader to verify that this
induces a well defined map which has the claimed properties.
\end{proof}

The following statement furnishes one direction of
Theorem~\ref{thm:main}.
\begin{proposition} \label{prop:main1}
Let $\chi$ be a character of the group $B_n$ and let $\bB$ be a
set endowed with a map $\cDes: \bB \to \Sigma^B(n)$. If
  \begin{equation} \label{eq:main1}
    \ch(\chi) \ = \ \sum_{b \in \bB} F_{\cDes(b)} (x, y),
  \end{equation}
then $\bB$ is a fine set for $\chi$.
\end{proposition}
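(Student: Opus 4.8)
The plan is to follow the proof of Theorem~\ref{thm:mainA} almost verbatim, substituting the type $B$ ingredients for their type $A$ counterparts: Proposition~\ref{prop:schurprod} takes the role of Proposition~\ref{prop:sFexpansion}, Theorem~\ref{thm:roiB} takes the role of Theorem~\ref{thm:roi}, and the linear independence of Poirier's functions $F_\sigma(x,y)$ over $\CC$ \cite[Corollary~9]{Poi98} takes the role of that of the $F_{n,S}(x)$.

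First I would write $\chi = \sum_{(\lambda,\mu)\vdash n} c_{\lambda,\mu}\,\chi^{\lambda,\mu}$ as a linear combination of irreducible $B_n$-characters. Applying $\ch$ and Proposition~\ref{prop:schurprod} gives
\[
  \ch(\chi) \ = \ \sum_{(\lambda,\mu)\vdash n} c_{\lambda,\mu}\, s_\lambda(x)s_\mu(y)
  \ = \ \sum_{(\lambda,\mu)\vdash n} c_{\lambda,\mu} \sum_{Q\in\SYT(\lambda,\mu)} F_{\cDes(Q)}(x,y).
\]
Thus both sides of \eqref{eq:main1} lie in the $\CC$-span of $\{F_\sigma(x,y):\sigma\in\Sigma^B(n)\}$, and since these functions are linearly independent, \eqref{eq:main1} is equivalent to the coefficientwise identity
\[
  |\{b\in\bB:\cDes(b)=\sigma\}| \ = \ \sum_{(\lambda,\mu)\vdash n} c_{\lambda,\mu}\cdot|\{Q\in\SYT(\lambda,\mu):\cDes(Q)=\sigma\}|
\]
holding for every signed set $\sigma\in\Sigma^B(n)$.

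Next, fix a signed composition $\gamma$ of $n$ and let $(\alpha,\beta)\vdash n$ be the bipartition obtained by reordering the unbarred and barred parts of $\gamma$ in weakly decreasing order. By Theorem~\ref{thm:roiB} we have $\chi^{\lambda,\mu}(\gamma)=\sum_{Q\in\SYT(\lambda,\mu)}\wei_\gamma(\cDes(Q))$ for every bipartition $(\lambda,\mu)$, and grouping the bitableaux according to the value of their signed descent set rewrites this as $\chi^{\lambda,\mu}(\gamma)=\sum_{\sigma\in\Sigma^B(n)}\wei_\gamma(\sigma)\cdot|\{Q\in\SYT(\lambda,\mu):\cDes(Q)=\sigma\}|$. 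Hence, assuming \eqref{eq:main1} and substituting the coefficientwise identity above,
\[
  \chi(\gamma) \ = \ \sum_{(\lambda,\mu)\vdash n} c_{\lambda,\mu}\sum_{\sigma\in\Sigma^B(n)}\wei_\gamma(\sigma)\cdot|\{Q\in\SYT(\lambda,\mu):\cDes(Q)=\sigma\}|
  \ = \ \sum_{\sigma\in\Sigma^B(n)}\wei_\gamma(\sigma)\cdot|\{b\in\bB:\cDes(b)=\sigma\}|,
\]
and regrouping the last sum over the elements of $\bB$ gives $\chi(\gamma)=\sum_{b\in\bB}\wei_\gamma(\cDes(b))$, which is precisely \eqref{eq:fineB}. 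Since $\gamma$ was an arbitrary signed composition of $n$, this shows that $\bB$ is a fine set for $\chi$.

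This direction is essentially bookkeeping and presents no real difficulty once Proposition~\ref{prop:schurprod} and Theorem~\ref{thm:roiB} are in hand; the only step deserving a word of care is the passage from the functional identity \eqref{eq:main1} to the coefficientwise identity, which uses both that $\ch(\chi)$ genuinely lies in the span of the $F_\sigma(x,y)$ (guaranteed by Proposition~\ref{prop:schurprod}) and Poirier's linear independence result. The genuinely hard part of Theorem~\ref{thm:main} is the converse implication, not addressed by this proposition, which rests on the invertibility of the type $B$ weight matrix.
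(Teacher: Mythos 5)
Your proposal is correct and follows essentially the same route as the paper: decompose $\chi$ into irreducibles, apply Proposition~\ref{prop:schurprod} to reduce \eqref{eq:main1} to the coefficientwise identity on signed descent distributions, and then substitute into Theorem~\ref{thm:roiB}. Your explicit appeal to the linear independence of Poirier's functions $F_\sigma(x,y)$ makes precise a step the paper leaves implicit, but the argument is the same.
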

\begin{proof}
Let us express $\chi = \sum_{(\lambda, \mu) \vdash n} c_{\lambda,
\mu} \chi^{\lambda, \mu}$ as a linear combination of irreducible
characters. We then have, by Proposition~\ref{prop:schurprod},
\begin{eqnarray*}
   \ch(\chi) & = & \sum_{(\lambda, \mu) \vdash n} c_{\lambda, \mu} \,
   \ch (\chi^{\lambda, \mu}) \ = \ \sum_{(\lambda, \mu) \vdash n}
  c_{\lambda, \mu} \, s_\lambda(x) s_\mu(y) \\ 
  &=& \sum_{(\lambda, \mu)
  \vdash n} c_{\lambda, \mu} \sum_{Q \in \SYT(\lambda, \mu)}
  F_{\cDes(Q)} (x, y).
\end{eqnarray*}
Comparing with (\ref{eq:main1}) we get
\[
    |\{b \in \bB: \cDes(b) = \sigma \}| \ = \sum_{(\lambda, \mu)
    \vdash n} c_{\lambda, \mu} \cdot |\{Q \in \SYT(\lambda, \mu):
    \cDes(Q) = \sigma \}|
\]
for every $\sigma \in \Sigma^B(n)$. Using this equation and
Theorem~\ref{thm:roiB}, we get

\begin{eqnarray*}
  \chi (\alpha, \beta) &=& \sum_{(\lambda, \mu) \vdash n} c_{\lambda,
  \mu} \, \chi^{\lambda, \mu} (\alpha, \beta) \ = \ \sum_{(\lambda,
  \mu) \vdash n} c_{\lambda, \mu} \sum_{Q \in \SYT(\lambda,
  \mu)} \wei_\gamma (\cDes(Q)) \\
  &=& \sum_{\sigma  \in \Sigma^B(n)} \wei_\gamma (\sigma) \,
  \sum_{(\lambda, \mu) \vdash n} c_{\lambda, \mu} \cdot
  |\{Q \in \SYT(\lambda, \mu): \cDes(Q) = \sigma \}| \\
  &=& \sum_{\sigma  \in \Sigma^B(n)} \wei_\gamma (\sigma) \cdot
  |\{b \in \bB: \cDes(b) = \sigma \}| \\
  &=& \sum_{b \in \bB} \wei_\gamma (\cDes(b))
\end{eqnarray*}
for every bipartition $(\alpha,\beta) \vdash n$ and
every permutation $\gamma$ of the parts of $\alpha^+$ and
$\beta^-$ and the proof follows.
\end{proof}

For the proof of the other direction of Theorem~\ref{thm:main} we
need the following $B_n$-analogue of \cite[Theorem~1.4]{AR15},
which will be proved in Section~\ref{subsec:weight_matrix}.
\begin{theorem}\label{thm:char_to_cDes}
If $\bB$ is a fine set for a $B_n$-character $\chi$, then the
distribution of $\cDes$ over $\bB$ is uniquely determined by
$\chi$.
\end{theorem}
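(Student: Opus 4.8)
The plan is to mimic the argument used for Theorem~\ref{thm:mainA}, but the key point is already essentially isolated: by Theorem~\ref{thm:roiB} the bitableaux sets $\SYT(\lambda,\mu)$ are fine sets for the irreducible characters, so if $\bB$ is fine for $\chi=\sum_{(\lambda,\mu)\vdash n}c_{\lambda,\mu}\chi^{\lambda,\mu}$, then $\bB$ and the disjoint union $\bB'$ of $c_{\lambda,\mu}$ copies of $\SYT(\lambda,\mu)$ (with the associated $\cDes$ map) are both fine for $\chi$. Thus it suffices to prove the statement for two arbitrary fine sets of the same character: if $\bB_1$ and $\bB_2$ are fine for the same $B_n$-character, then $\cDes$ is equidistributed over them. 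Writing $m_j(\sigma)=|\{b\in\bB_j:\cDes(b)=\sigma\}|$ for $j=1,2$ and $\sigma\in\Sigma^B(n)$, the fine-set condition~\eqref{eq:fineB} says that for every signed composition $\gamma$,
\[
\sum_{\sigma\in\Sigma^B(n)} \wei_\gamma(\sigma)\, m_1(\sigma) \ = \ \chi(\gamma) \ = \ \sum_{\sigma\in\Sigma^B(n)} \wei_\gamma(\sigma)\, m_2(\sigma).
\]
Hence the vector $m_1-m_2$ lies in the kernel of the $\Sigma^B(n)\times\Comp^B(n)$ matrix $W=(\wei_\gamma(\sigma))$. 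So the whole theorem reduces to showing that $W$ is invertible, i.e.\ has trivial kernel; since $|\Sigma^B(n)|=|\Comp^B(n)|=2\cdot 3^{n-1}$, this is a genuine square-matrix statement.

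First I would set up $W$ explicitly as the weight matrix of Section~\ref{subsec:weight_matrix}, indexing both rows and columns by signed compositions of $n$ (equivalently, by $\Sigma^B(n)$), with entries given by Definition~\ref{def:weightB}. The next step is to understand its block structure. The weight $\wei_\gamma(\sigma)$ vanishes unless the sign vector $\teps$ of $\sigma$ is constant on each $\gamma$-block $\{r_{i-1}+1,\dots,r_i\}$; this forces a coarsening relation between $\sigma$ and $\gamma$ that should let one factor $W$ through a tensor/triangular decomposition. Concretely, I expect $W$ to be expressible (after suitable ordering of the index set) as a product or Kronecker-type combination of the classical type $A$ unimodality matrix — the one implicitly shown invertible in the proof of Theorem~\ref{thm:mainA} via \cite[Theorem~1.4]{AR15} — with a small matrix of Walsh--Hadamard type recording the sign data (the $(-1)^{n_\gamma(\sigma)}$ factor and the bar/unbar choices). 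This is exactly the "new family of matrices of Walsh--Hadamard type" advertised in the introduction.

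The main obstacle, and the crux of the whole argument, is computing $\det W$ and showing it is nonzero. The introduction explicitly flags this as "a tricky computation of their determinants," so I would expect to have to: (i) reorder rows and columns so that the vanishing pattern of $\wei_\gamma(\sigma)$ becomes block-triangular with respect to the underlying unsigned composition; (ii) on each diagonal block, identify the remaining matrix as a tensor product of the type $A$ weight matrices (already known to be unimodular from the $\mathfrak{S}_n$ theory) with Hadamard-like sign matrices whose determinants are explicit powers of $2$; and (iii) multiply the block determinants, checking the exponents of $2$ and $-1$ add up to something manifestly nonzero. Once $\det W\neq 0$ is established, the kernel of $W$ is trivial, so $m_1=m_2$, which is precisely the asserted uniqueness of the distribution of $\cDes$ over $\bB$. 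The details of step~(ii)—pinning down exactly which Hadamard matrix appears and its determinant—are the delicate part and are deferred to Section~\ref{subsec:weight_matrix}.
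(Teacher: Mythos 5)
Your reduction is exactly the one the paper uses: replace $\bB$ by the disjoint union $\bB'$ of $c_{\lambda,\mu}$ copies of $\SYT(\lambda,\mu)$ via Theorem~\ref{thm:roiB}, observe that both are fine for $\chi$, and conclude that the difference of the two distribution vectors lies in the kernel of the square weight matrix $A_n=(\wei_\gamma(\sigma))$, so that everything hinges on $\det A_n\neq 0$. Up to that point your argument is correct and matches Section~\ref{subsec:weight_matrix}. But the invertibility of $A_n$ \emph{is} the content of the theorem, and you do not prove it: you state that you ``expect'' $A_n$ to decompose as a Kronecker-type combination of the type $A$ unimodality matrix with small Hadamard blocks, and you explicitly defer ``the delicate part.'' That is a genuine gap, not a routine verification.

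Moreover, the decomposition you are hoping for is not the one that works. The paper does not factor $A_n$ through the type $A$ matrix at all. Instead it encodes signed sets as words over $\{0,1,*\}$, introduces an auxiliary matrix $\hA_n$ (obtained from $A_n$ by zeroing the entries where the initial run of $*$'s in the row index exceeds that in the column index) and the splitting $A_n=A_n^0+A_n^1$ by the first non-$*$ letter of the column index, and derives a $3\times 3$ block recursion expressing $A_{n+1}$ and $\hA_{n+1}$ in terms of $A_n$, $A_n^0$, $A_n^1$, $\hA_n$. After column operations the recursion for determinants reads $\det(A_{n+1})=\det(A_n)\cdot\det(-2A_n)\cdot\det(A_n+\hA_n)$, and the appearance of the mixed term $\det(A_n+\hA_n)$ is precisely why a recursion in $\det(A_n)$ alone does not close; the paper has to compute the whole one-parameter family $\det(\alpha A_n+(1-\alpha)\hA_n)$ (Corollaries~\ref{cor:det_recursion} and~\ref{cor:det_alpha_A}), using the substitution $\alpha\mapsto\alpha/(\alpha+1)$, to obtain the explicit nonzero product formula of Theorem~\ref{thm:det_A}. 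A clean tensor factorization of $A_n$ into a type $A$ part and a sign part would make $\hA_n$ unnecessary, and nothing in the structure of $\wei_\gamma(\sigma)$ (where the $\gamma$-unimodality of $S$ and the constancy of the sign vector on $\gamma$-blocks are coupled conditions) suggests such a factorization exists. So the step you deferred is both the crux and the place where your sketched strategy would likely fail.
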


\begin{proof}[Proof of Theorem~\ref{thm:main}] We only need to
prove the converse of Proposition~\ref{prop:main1}. Let $\bB$ be a
set which, endowed with a map $\cDes: \bB \to \Sigma^B(n)$, is
fine for a $B_n$-character $\chi$. We have to show that
(\ref{eq:main1}) holds. Express $\chi = \sum_{(\lambda, \mu)
\vdash n} c_{\lambda, \mu} \chi^{\lambda, \mu}$ as a linear
combination of irreducible characters. Let $\bB'$ be the disjoint
union, taken over all bipartitions $(\lambda, \mu) \vdash n$, of
$c_{\lambda, \mu}$ copies of $\SYT(\lambda, \mu)$, endowed with
the corresponding descent map $\cDes$. By Theorem~\ref{thm:roiB}
the set $\bB'$ is also fine for $\chi$.
Theorem~\ref{thm:char_to_cDes} now implies that $\cDes$ is
equidistributed over $\bB$ and $\bB'$. As shown at the beginning
of the proof of Proposition~\ref{prop:main1},
\[
\ch(\chi) \ = \ \sum_{b \in \bB'} F_{\cDes(b)} (x, y).
\]
Therefore, (\ref{eq:main1}) holds as well. 
\end{proof}

Finally, we deduce Corollary~\ref{cor:main} from
Theorem~\ref{thm:main}.

\begin{proof}[Proof of Corollary~\ref{cor:main}] The
equivalence (i) $\Leftrightarrow$ (ii) is a direct consequence of
Theorem~\ref{thm:main}. The equivalence (ii) $\Leftrightarrow$
(iii) follows from Proposition~\ref{prop:schurprod} and the linear
independence of the functions $F_\sigma (x, y)$ for $\sigma \in
\Sigma^B(n)$. \end{proof}

\subsection{The weight matrix}
\label{subsec:weight_matrix}

We now prove Theorem~\ref{thm:char_to_cDes}, the missing step in
the proof of Theorem~\ref{thm:main}, using a $B_n$-analogue of the
Hadamard-type matrix $A_n$ from~\cite{AR15, A14}.

By Definition~\ref{def:weightB}, a weight $\wei_\gamma(\sigma)$ is
defined for any signed composition $\gamma \in \Comp^B(n)$ and
signed set $\sigma \in \Sigma^B(n)$. The size of each of the sets
$\Comp^B(n)$ and $\Sigma^B(n)$ is $d_n := 2 \cdot 3^{n-1}$. Fixing
a linear order (to be specified later) on each of these sets, the
weights can be arranged into a \emph{weight matrix}
\[
A_n \, = \, (\wei_\gamma(\sigma))_{\gamma \in \Comp^B(n),\ \sigma
\in \Sigma^B(n)} \in \{1,0,-1\}^{d_n \times d_n}.
\]
For a set $\bB$, endowed with a map $\cDes: \bB \to \Sigma^B(n)$,
let $v = v(\bB)$ be the column vector of length $d_n$ with entries
\[
v_\sigma \, = \, |\{b \in \bB: \, \cDes(b) = \sigma\}| \qquad
(\forall \sigma \in \Sigma^B(n)),
\]
where $\Sigma^B(n)$ is in the specified linear order. For a
character $\chi$ of the hyperoctahedral group $B_n$, let $u =
u(\chi)$ be the column vector of length $d_n$ with entries
\[
u_\gamma = \chi(\gamma) \qquad (\forall \gamma \in \Comp^B(n)),
\]
where $\Comp^B(n)$ is in the specified order.
Definition~\ref{def:fineB} can now be restated as follows:
\[
\text{$\bB$ is a fine set for $\chi$} \,\Longleftrightarrow\,
u(\chi) = A_n \cdot v(\bB).
\]
Theorem~\ref{thm:char_to_cDes} is thus implied by the following
result.

\begin{theorem} \label{thm:det_A}
The matrix $A_n$ is invertible. In fact,
\[
|\det(A_n)| \ = \prod_{\gamma \in \Comp^B(n)} m_\gamma
\]
where, for a signed composition $\gamma \in \Comp^B(n)$ with part
sizes $\gamma_1, \ldots, \gamma_k$,
\[
m_\gamma \ := \ 2^{k/2} \cdot \prod_{i=1}^{k} \gamma_i.
\]
\end{theorem}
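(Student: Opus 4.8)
The plan is to prove the determinant formula for $A_n$ by finding a block structure that decouples the matrix after suitable ordering of $\Comp^B(n)$ and $\Sigma^B(n)$, reducing the computation to a product of determinants of small, explicitly computable matrices. First I would choose the linear order on both indexing sets so that they are grouped according to their underlying \emph{unsigned} composition $\alpha$ and underlying subset $S(\alpha) \subseteq [n-1]$: a signed composition $\gamma$ has an underlying composition $|\gamma|$ obtained by erasing bars, and a signed set $\sigma = (S,\veps)$ has underlying set $S$. The key observation from Definition~\ref{def:weightB} is that $\wei_\gamma(\sigma) = 0$ unless $S$ is unimodal with respect to $|\gamma|$ \emph{and} the sign vector $\teps$ of $\sigma$ is constant on each block $\{r_{i-1}+1,\dots,r_i\}$ determined by $|\gamma|$. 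The first condition is exactly the support condition already present in the type $A$ matrix $A_n$ of \cite{AR15, A14}; the second condition forces a tensor/block decomposition over the blocks of the composition.

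Next I would make the block decomposition precise. Group the rows by the pair consisting of the unsigned composition $\alpha = |\gamma|$ together with the sign pattern of $\gamma$ on the \emph{blocks} of $\alpha$ (i.e., which parts are barred); group the columns by the pair $(S, \teps|_{\text{blocks}})$ where $S$ is $\alpha$-unimodal for the relevant $\alpha$. Because the nonzero pattern of $\wei_\gamma(\sigma)$ depends on $\sigma$ only through data that is ``per block'' (the prefix of each block-interval that lies in $S$, and the sign assigned to each block), the matrix $A_n$, after this reordering, becomes block-diagonal-like: more precisely I expect it to factor as a sum over compositions $\alpha \vDash n$ of a Kronecker-type product $\bigotimes_{i} M_{\alpha_i}$ of small matrices indexed by the individual parts, tensored against a sign-choice matrix over $\{-,+\}^{k}$ recording the barred/unbarred choices. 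The part $M_m$ governs, for a single block of size $m$: the rows indexed by the two signed single-part compositions $(m)$ and $(\bar m)$ and the columns indexed by the prefixes $[p]$ ($0 \le p \le m$, contributing $m$ relevant columns inside the block since the $p = m$ case is the forced breakpoint) together with the constant sign $\pm$ on the block — a $2 \times 2m$-type local object whose relevant square reduction has a $2 \times 2$ Walsh--Hadamard core coming from the sign exponent $n_\gamma(\sigma)$ in \eqref{eq:defwei}. Carefully accounting dimensions, the local determinant contribution of a part of size $\gamma_i$ should be $\sqrt{2}\,\gamma_i$, which multiplies over the $k$ parts to give $m_\gamma = 2^{k/2}\prod_i \gamma_i$, and then these multiply over all $\gamma \in \Comp^B(n)$.

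Concretely, the computation I would carry out is: (1) fix the orders and write $A_n$ in block form indexed primarily by $\alpha \vDash n$ on both sides, using that $S$ must be $\alpha$-unimodal; (2) within the $\alpha$-block, separate the sign data and show the block is a tensor product over the parts of $\alpha$ of a fixed small matrix $B_m$ (of size roughly $2m \times 2m$ after pairing part-signs with prefix-lengths, where $B_m$ has entries $(-1)^{|[p]\sm S(\gamma)| + n_\gamma(\sigma)}$ restricted to the per-block data), times the full sign-interaction; (3) compute $\det B_m$ directly — this is where the sign exponent $|S \sm S(\gamma)|$ contributes, as in the type $A$ computation giving the factor $m$ in Stanley/AR15-type Vandermonde-like evaluations, and the $\pm$ block sign contributes the $\sqrt 2$ via a $2\times 2$ Hadamard block; (4) assemble via $\det(\bigotimes) = \prod (\det)^{(\text{complementary dims})}$ and multiplicativity of determinants over the block-diagonal structure. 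The main obstacle I anticipate is step (2)–(3): getting the bookkeeping of dimensions exactly right so that the local contribution is $2^{1/2}\gamma_i$ rather than $2\gamma_i$ or $\gamma_i$, because the ``sign on the block'' degree of freedom is shared in a slightly subtle way between the $\gamma$-side (barred vs.\ unbarred part) and the $\sigma$-side (sign vector constant on the block), and the exponent $n_\gamma(\sigma)$ only activates when \emph{both} are negative — this asymmetry is what produces a non-symmetric $2\times2$ core whose determinant has absolute value $2$, contributing $2^{1/2}$ per part after the tensor-power dimension count. Verifying the base cases $n = 1$ ($d_1 = 2$, $|\det A_1| = 2$) and $n = 2$ against the formula will be the sanity check that the dimension bookkeeping is correct before writing up the general argument.
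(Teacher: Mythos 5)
There is a genuine gap at the heart of your plan: the block-diagonal/tensor-product structure you want to impose on $A_n$ does not exist, under any ordering of $\Comp^B(n)$ and $\Sigma^B(n)$. The support of the row indexed by $\gamma$ consists of all $\sigma=(S,\veps)$ with $S$ unimodal with respect to $|\gamma|$ and with sign vector $\teps$ constant on the blocks of $|\gamma|$, and these supports overlap massively across different underlying compositions: for example, every prefix $[p]$ is $\alpha$-unimodal for \emph{every} $\alpha$, so the column indexed by $\sigma=(\{n\},+)$ (constant sign vector, $S\cap[n-1]=\varnothing$) lies in the support of every single row. Hence the columns cannot be partitioned into groups matched with the rows of a fixed $\alpha$. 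The counts do not match either: over a fixed unsigned $\alpha$ with $k$ parts there are only $2^{k}$ rows, while the number of columns $\sigma$ satisfying the two support conditions for $\alpha$ is on the order of $2^{k}\prod_i\alpha_i$ (already $2\times 2n$ for $\alpha=(n)$, as you note), and the ``relevant square reduction'' you invoke is exactly the step that cannot be performed, because the surplus columns also meet other rows. Finally, even granting a Kronecker factorization $\bigotimes_i M_{\alpha_i}$, the identity $\det(\bigotimes_i M_i)=\prod_i(\det M_i)^{\prod_{j\ne i}\dim M_j}$ produces complementary-dimension exponents that are incompatible with the plain product $\prod_\gamma 2^{k/2}\prod_i\gamma_i$ in the statement. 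The same objection already applies in type $A$: the matrix of \cite{AR15, A14} is not block diagonal, and its determinant is not obtained by a per-block Vandermonde-type evaluation.

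What the paper actually does is an induction on $n$ in which triangularity appears only one position at a time. Signed compositions and signed sets are encoded as words in $\{0,1,*\}^n$ whose last letter is not $*$; peeling off the first letter writes $A_{n+1}$ as a $3\times 3$ block matrix in $A_n$, in an auxiliary matrix $\hA_n$ (obtained from $A_n$ by zeroing the entries where the initial run of $*$'s in the row word exceeds that of the column word), and in the column-restricted pieces $A_n^0,A_n^1$. Elementary column operations make this block matrix block triangular, giving $\det\bigl(\alpha A_{n+1}+(1-\alpha)\hA_{n+1}\bigr)=\det(A_n)\cdot\det(-2A_n)\cdot\det(\alpha A_n+\hA_n)$, and the formula is then identified by checking that both sides satisfy the same recursion under the substitution $\alpha\mapsto\alpha/(\alpha+1)$. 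Note that the one-parameter deformation $\alpha A_n+(1-\alpha)\hA_n$ is forced on you: the recursion for $\det(A_{n+1})$ involves $\det(A_n+\hA_n)$, so it does not close up on $\det(A_n)$ alone. Some inductive device of this kind is needed; the per-block independence of the weight by itself does not decouple the matrix.
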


Theorem~\ref{thm:det_A} will be proved using a recursive formula
for $A_n$ and for an auxiliary matrix $\hA_n$.

Consider now the alphabet $L = \{0,1,*\}$, the set $L^n$ of all
words of length $n$ with letters from $L$, and the set $L_*^n$ of
all words $w \in L^n$ in which the last letter is \emph{not} $*$.
Any signed set $\sigma = (S, \veps) \in \Sigma^B(n)$ is uniquely
represented by a word $w(\sigma) = a_1 \cdots a_n \in L_*^n$,
where
\[
a_i \, = \, \begin{cases}
0, & \text{if } i \in S \text{ and } \veps(i) = +; \\
1, & \text{if } i \in S \text{ and } \veps(i) = -; \\
*, & \text{if } i \not\in S.
\end{cases}
 \]
Recalling that we always have $n \in S$, the map $\sigma \mapsto
w(\sigma)$ is well defined and is clearly a bijection from
$\Sigma^B(n)$ to $L_*^n$. The natural bijection from $\Comp^B(n)$
to $\Sigma^B(n)$ (see Section~\ref{subsec:comp}) yields a
corresponding representation of signed compositions by words in
$L_*^n$.

By the above, we can consider the rows and columns of the weight
matrix $A_n$ to be indexed by words $w \in L_*^n$. Let $\hA_n$ be
the matrix obtained from $A_n$ by setting to zero all the entries
indexed by $(w,w')$ for which the initial sequence of $*$-s in $w$
is longer than the initial sequence of $*$-s in $w'$, namely:
$(\hA_n)_{w, w'} = 0$ if $i(w) > i(w')$, where
\[
i(w) := \min \{i \,|\, w_i \ne *\}.
\]
This corresponds to a pair $(\gamma, \sigma)$ for which there is
an element of $S(\sigma)$ before the end of the first part of the
signed composition $\gamma$.

For a word $w \in L_*^n$, let $c(w)$ be the first letter in $w$
which is not $*$:
\[
c(w) := w_{i(w)}.
\]
For a letter $a' \in \{0, 1\}$, let $A_n^{a'}$ be the matrix
obtained from $A_n$ by setting to zero all the entries in columns
indexed by words $w'$ with $c(w') \ne a'$. Clearly, $A_n = A_n^0 +
A_n^1$. We use similar notation for $\hA_n$.

Definition~\ref{def:weightB} of the weight $\wei_\gamma(\sigma)$
implies the following recursive properties of $A_n$ and $\hA_n$:
\begin{lemma}
For $n \ge 1$, letters $a, a' \in L$ and words $w, w' \in L_*^n$,
\[
(A_{n+1})_{aw, a'w'} \ = \
\begin{cases}
(-1)^{a \cdot a'} (A_n)_{w, w'}, & \text{if } a, a' \in \{0, 1\}; \\
(-1)^{a \cdot c(w')} (A_n)_{w, w'}, & \text{if } a \in \{0, 1\},\, a' = *; \\
-(A_n^{a'})_{w, w'}, & \text{if } a = *,\, a' \in \{0, 1\}; \\
(\hA_n)_{w, w'}, & \text{if } a = a' = *
\end{cases}
\]
and
\[
(\hA_{n+1})_{aw, a'w'} \ = \
\begin{cases}
(-1)^{a \cdot a'} (A_n)_{w, w'}, & \text{if } a, a' \in \{0, 1\}; \\
(-1)^{a \cdot c(w')} (A_n)_{w, w'}, & \text{if } a \in \{0, 1\},\, a' = *; \\
0, & \text{if } a = *,\, a' \in \{0, 1\}; \\
(\hA_n)_{w, w'}, & \text{if } a = a' = *.
\end{cases}
\]
Also,
\[
(A_1)_{a, a'} \, = \, (\hA_1)_{a, a'} \, = \, (-1)^{a \cdot a'}
\qquad (\forall a, a' \in \{0, 1\}).
\]
\end{lemma}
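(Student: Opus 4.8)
The plan is to verify each displayed identity directly from Definition~\ref{def:weightB}, by tracking how the three ingredients of the weight — the $\gamma$-unimodality of $S$, the constancy of the sign vector $\teps$ on the blocks of $\gamma$, and the exponent $|S \sm S(\gamma)| + n_\gamma(\sigma)$ — change when a single letter is prepended to each index word. Fix $aw, a'w' \in L_*^{n+1}$ and let $\gamma', \sigma'$ be the signed composition and signed set of $n+1$ they encode, while $\gamma, \sigma$ denote those of $n$ encoded by $w, w'$. The effect of the prepended letters is easy to record: if $a \in \{0,1\}$ then $\gamma'$ acquires a new first part of length $1$ (so $1 \in S(\gamma')$) carrying sign $a$, whereas if $a = *$ the old first part is merely lengthened by one, so $S(\gamma') = S(\gamma)+1$ and the first part of $\gamma'$ inherits the sign $c(w)$ of the old one. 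Symmetrically, if $a' \in \{0,1\}$ then $1 \in S'$ with $\veps'(1)$ determined by $a'$ and $S' = \{1\} \cup (S+1)$, while if $a' = *$ then $1 \notin S'$ and $S' = S+1$; in either case $\teps'(j) = \teps(j-1)$ for $j \ge 2$, and when $a' = *$ the initial value $\teps'(1)$ equals the sign of $c(w')$.

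When $a \in \{0,1\}$ the analysis is routine, because the singleton first block of $\gamma'$ imposes no unimodality constraint (the relevant set $\{r_0+1,\dots,r_1-1\}$ is empty) and a vacuous constancy constraint, so $\sigma'$ is $\gamma'$-unimodal with constant block signs exactly when $\sigma$ is $\gamma$-unimodal with constant block signs. Here I would just compare exponents: the shift leaves $|S'\sm S(\gamma')| = |S \sm S(\gamma)|$, and the first block contributes to $n_{\gamma'}(\sigma')$ precisely when both the prepended sign and $\teps'(1)$ are negative. For $a' \in \{0,1\}$ this produces the factor $(-1)^{a \cdot a'}$, and for $a' = *$ it produces $(-1)^{a \cdot c(w')}$, since then $\teps'(1)$ carries the sign of $c(w')$. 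The base case $n=1$ is this same computation with $\gamma,\sigma$ both the singleton.

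The substance of the lemma lies in the case $a = *$, where the first part of $\gamma'$ has length $r_1' = i(w)+1 \ge 2$ and the first block of $\gamma'$ is $\{1,\dots,r_1'\}$. I would show that the first-block conditions for $(\gamma',\sigma')$ translate into those for $(\gamma,\sigma)$ together with one extra constraint. For $a' \in \{0,1\}$ we have $1 \in S'$, so unimodality of $S'$ on $\{1,\dots,r_1'-1\}$ is equivalent to unimodality of $S$ on the first block of $\gamma$, while constancy of $\teps'$ on $\{1,\dots,r_1'\}$ forces the common sign to equal $\veps'(1)$ and hence forces $\teps(1)$ — that is, $c(w')$ — to equal $a'$; this is exactly the restriction encoded by $A_n^{a'}$. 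The shift now places $1$ in $S' \sm S(\gamma')$, raising the exponent by one and yielding the factor $-1$, while the first blocks of $\gamma'$ and of $\gamma$ are checked to contribute identically to $n_{\gamma'}(\sigma')$ and $n_\gamma(\sigma)$; together these give $(A_{n+1})_{*w,a'w'} = -(A_n^{a'})_{w,w'}$. For $a' = *$ we instead have $1 \notin S'$, so $S'$ cannot have a nonempty prefix inside $\{1,\dots,r_1'-1\}$; unimodality therefore forces $S' \cap \{1,\dots,r_1'-1\} = \emptyset$, i.e. $i(w') \ge i(w)$, which is precisely the support condition defining $\hA_n$. On this support the first-block constancy is automatic, the exponent and $n_\gamma(\sigma)$ are unchanged by the shift, and one obtains $(A_{n+1})_{*w,*w'} = (\hA_n)_{w,w'}$.

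Finally, the recursion for $\hA_{n+1}$ would follow formally from that for $A_{n+1}$ and the defining rule $(\hA_{n+1})_{aw,a'w'} = 0$ when $i(aw) > i(a'w')$: in the cases $a \in \{0,1\}$ or $a = a' = *$ one has $i(aw) \le i(a'w')$ (and in the last case $\hA_n$ already carries the relevant zero), so no new entries are killed, whereas when $a = *$ and $a' \in \{0,1\}$ one always has $i(aw) \ge 2 > 1 = i(a'w')$, forcing the entry to vanish. The main obstacle I anticipate is purely bookkeeping, concentrated in the $a = *$ case: keeping the first-block conditions for $\gamma'$ correctly aligned with those for $\gamma$, and attributing the extra sign and the support restriction to the right sources — the shift of $S(\gamma)$ and the value of $\teps'(1)$ — rather than conflating them.
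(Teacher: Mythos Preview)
Your proposal is correct and follows exactly the route the paper indicates: the paper does not give a detailed proof of this lemma at all, merely stating that ``Definition~\ref{def:weightB} of the weight $\wei_\gamma(\sigma)$ implies the following recursive properties,'' and your argument is precisely the careful case-by-case verification of that implication. The bookkeeping you outline --- tracking how prepending $a$ and $a'$ affects the first-block unimodality, the first-block sign constancy, the count $|S \sm S(\gamma)|$, and the contribution to $n_\gamma(\sigma)$ --- is accurate in every case, including the key observations that in the $(a,a') = (*,a')$ case the extra element $1 \in S' \sm S(\gamma')$ supplies the sign $-1$ while constancy forces $c(w') = a'$, and that in the $(*,*)$ case the failure of $1$ to lie in $S'$ converts first-block unimodality into the support condition $i(w') \ge i(w)$ of $\hA_n$.
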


Specifying a linear order on $L_*^n$, we can write these
recursions in matrix form.
\begin{corollary}
Using the linear order $0 < 1 < *$ on $L$ and the resulting
lexicographic order on $L_*^n$,
\[
A_{n+1} \, = \,
\begin{pmatrix}
A_n & A_n & A_n \\
A_n & -A_n & A_n^0 - A_n^1 \\
-A_n^0 & -A_n^1 & \hA_n
\end{pmatrix}
\qquad (n \ge 1),
\]

\[
\hA_{n+1} \, = \,
\begin{pmatrix}
A_n & A_n & A_n \\
A_n & -A_n & A_n^0 - A_n^1 \\
0 & 0 & \hA_n
\end{pmatrix}
\qquad (n \ge 1)
\]
and
\[
A_1 \, = \, \hA_1 \, = \,
\begin{pmatrix}
1 & 1 \\
1 & -1
\end{pmatrix}.
\]
\end{corollary}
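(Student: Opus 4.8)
The plan is to derive the block decomposition directly from the entrywise recursion in the preceding Lemma, once the indexing set $L_*^{n+1}$ is organized according to the prescribed lexicographic order. First I would observe that every word in $L_*^{n+1}$ factors uniquely as $aw$ with $a \in L = \{0,1,*\}$ its first letter and $w \in L_*^n$ the remaining length-$n$ word (the last letter of $aw$ equals that of $w$, so it is non-$*$ precisely when $w \in L_*^n$). Since the lexicographic order induced by $0 < 1 < *$ sorts words first by their leading letter and only then by the trailing word, this factorization partitions the rows (and, identically, the columns) of $A_{n+1}$ into three consecutive blocks according to whether the leading letter is $0$, $1$, or $*$; within each block the index $w$ runs through $L_*^n$ in exactly the order used to build $A_n$ and $\hA_n$. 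Thus $A_{n+1}$ and $\hA_{n+1}$ acquire a $3 \times 3$ block form whose $(a,a')$ block is the $d_n \times d_n$ matrix with $(w,w')$-entry $(A_{n+1})_{aw,a'w'}$.

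It then remains to read off each block from the four cases of the Lemma. For $a,a' \in \{0,1\}$ the entry is $(-1)^{a a'}(A_n)_{w,w'}$, giving $A_n, A_n, A_n, -A_n$ in the four subblocks, i.e.\ the upper-left $\begin{pmatrix} A_n & A_n \\ A_n & -A_n \end{pmatrix}$. For $a \in \{0,1\}$, $a' = *$ the entry is $(-1)^{a\, c(w')}(A_n)_{w,w'}$: when $a = 0$ this is simply $(A_n)_{w,w'}$, yielding the block $A_n$, whereas when $a = 1$ it equals $(-1)^{c(w')}(A_n)_{w,w'}$. Here the key identification is that, by definition of $A_n^{a'}$ (zero out all columns $w'$ with $c(w') \ne a'$) and the fact that $c(w') \in \{0,1\}$ for every $w' \in L_*^n$, the matrix $A_n^0 - A_n^1$ has $(w,w')$-entry exactly $(-1)^{c(w')}(A_n)_{w,w'}$; hence the $(1,*)$ block is $A_n^0 - A_n^1$. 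For $a = *$, $a' \in \{0,1\}$ the entry $-(A_n^{a'})_{w,w'}$ gives the blocks $-A_n^0$ and $-A_n^1$, and for $a = a' = *$ the entry $(\hA_n)_{w,w'}$ gives $\hA_n$. Assembling these according to the block structure reproduces the claimed matrix for $A_{n+1}$.

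The matrix for $\hA_{n+1}$ follows identically, the only change being that the second recursion of the Lemma replaces the $a = *$, $a' \in \{0,1\}$ entries by $0$, so the two bottom-left blocks $-A_n^0$ and $-A_n^1$ become $0$. The base case $A_1 = \hA_1$ is immediate from $(A_1)_{a,a'} = (-1)^{a a'}$ evaluated at $a,a' \in \{0,1\}$ in the order $0 < 1$. I expect no serious obstacle: the argument is purely bookkeeping. The one point demanding care is the asymmetry between the top-right block $A_n$ (coming from $a = 0$) and the block $A_n^0 - A_n^1$ (coming from $a = 1$), which arises solely because the sign $(-1)^{a\, c(w')}$ is trivial when $a = 0$; and the correct recognition of $A_n^0 - A_n^1$ as the column-signed version of $A_n$, which rests on the decomposition $A_n = A_n^0 + A_n^1$.
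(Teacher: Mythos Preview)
Your proposal is correct and matches the paper's approach: the paper states this corollary without proof, treating the block decomposition as an immediate consequence of the preceding Lemma once the words in $L_*^{n+1}$ are grouped by their first letter under the lexicographic order. Your write-up is exactly the bookkeeping that makes this explicit, and the identification of the $(1,*)$ block as $A_n^0 - A_n^1$ via $(-1)^{c(w')}(A_n)_{w,w'}$ is the one nontrivial observation, which you handle correctly.
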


\begin{corollary} \label{cor:det_recursion}
For $n \ge 1$,
\[
\det (A_{n+1}) \, = \, \det (A_n) \cdot \det (-2A_n) \cdot \det
(A_n + \hA_n)
\]
and
\[
\det (\hA_{n+1}) \, = \, \det (A_n) \cdot \det (-2A_n) \cdot \det
(\hA_n),
\]
with
\[
\det (A_1) \, = \, \det (\hA_1) \, = \, -2.
\]
More generally, for every real $\alpha$,
\[
\det (\alpha A_{n+1} + (1 - \alpha) \hA_{n+1}) \ = \ \det (A_n)
\cdot \det (-2A_n) \cdot \det (\alpha A_n + \hA_n)
\]
for $n \ge 1$, with
\[
\det (\alpha A_1 + (1 - \alpha) \hA_1) \, = \, -2.
\]
\end{corollary}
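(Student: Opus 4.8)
The plan is to prove the single identity
\[
\det\bigl(\alpha A_{n+1} + (1-\alpha)\hA_{n+1}\bigr) \ = \ \det(A_n)\cdot\det(-2A_n)\cdot\det(\alpha A_n + \hA_n)
\]
for all $\alpha$, and then recover the first two determinant recursions by specializing to $\alpha = 1$ and $\alpha = 0$, since $\alpha A_{n+1} + (1-\alpha)\hA_{n+1}$ equals $A_{n+1}$ when $\alpha = 1$ and $\hA_{n+1}$ when $\alpha = 0$. Reading off the block recursions of the preceding corollary (in the lexicographic order induced by $0 < 1 < *$), the top two block rows of $A_{n+1}$ and of $\hA_{n+1}$ coincide, while the bottom block row is $(-A_n^0, -A_n^1, \hA_n)$ for $A_{n+1}$ and $(0, 0, \hA_n)$ for $\hA_{n+1}$; hence
\[
\alpha A_{n+1} + (1-\alpha)\hA_{n+1} \ = \
\begin{pmatrix}
A_n & A_n & A_n \\
A_n & -A_n & A_n^0 - A_n^1 \\
-\alpha A_n^0 & -\alpha A_n^1 & \hA_n
\end{pmatrix},
\]
a block matrix with $d_n \times d_n$ blocks.

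The only structural fact I need is that $A_n^0$ and $A_n^1$ are obtained from $A_n$ by zeroing out the columns indexed by words $w'$ with $c(w') \ne 0$ and with $c(w') \ne 1$, respectively. Since $c(w') \in \{0, 1\}$ for every $w' \in L_*^n$, this says $A_n^0 = A_n E_0$ and $A_n^1 = A_n E_1$, where $E_0, E_1$ are complementary diagonal $0/1$ matrices, so that $E_0 + E_1 = I$, $E_0 E_1 = 0$ and $E_i^2 = E_i$. In particular $A_n^0 + A_n^1 = A_n$, $A_n E_1 = A_n^1$, $A_n^0 E_1 = A_n E_0 E_1 = 0$ and $A_n^1 E_1 = A_n E_1^2 = A_n^1$.

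Next I would reduce the displayed block matrix to block lower-triangular form by three successive block column operations, each of which is right multiplication by a block-unipotent matrix and hence preserves the determinant. Replacing the second block column by itself minus the first turns the $(1,2)$ block into $0$, the $(2,2)$ block into $-2A_n$ and the $(3,2)$ block into $\alpha(A_n^0 - A_n^1)$. Replacing the third block column by itself minus the first, and using $(A_n^0 - A_n^1) - A_n = -2A_n^1$, turns the $(1,3)$ block into $0$, the $(2,3)$ block into $-2A_n^1$ and the $(3,3)$ block into $\hA_n + \alpha A_n^0$. Finally, replacing the third block column by itself minus the second right-multiplied by $E_1$ turns the $(2,3)$ block into $-2A_n^1 - (-2A_n)E_1 = -2A_n^1 + 2A_n^1 = 0$ and the $(3,3)$ block into $\hA_n + \alpha A_n^0 - \alpha(A_n^0 - A_n^1)E_1 = \hA_n + \alpha A_n^0 + \alpha A_n^1 = \hA_n + \alpha A_n$, using $A_n^0 E_1 = 0$ and $A_n^1 E_1 = A_n^1$. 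The result is block lower-triangular with diagonal blocks $A_n$, $-2A_n$ and $\hA_n + \alpha A_n$, which gives the claimed formula. The base cases follow at once from $A_1 = \hA_1 = \left(\begin{smallmatrix} 1 & 1 \\ 1 & -1 \end{smallmatrix}\right)$, of determinant $-2$, together with $\alpha A_1 + (1-\alpha)\hA_1 = A_1$.

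I do not expect a genuine obstacle here: the computation is short once the three column operations are chosen correctly. The one point worth flagging is that everything must be carried out with column operations and without inverting $A_n$, because the invertibility of $A_n$ is precisely what Theorem~\ref{thm:det_A} — the intended consequence of this corollary — asserts. A Schur-complement argument using the block $\left(\begin{smallmatrix} A_n & A_n \\ A_n & -A_n \end{smallmatrix}\right)$ would be equally natural, but it would require that block to be invertible, forcing a density argument in the entries of $A_n$; the column-operation proof sidesteps this altogether.
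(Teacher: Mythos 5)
Your proof is correct and follows essentially the same route as the paper: the same block form of $\alpha A_{n+1}+(1-\alpha)\hA_{n+1}$, the same three block column operations (the paper phrases your third step as using ``only half of the columns'' of the middle block $-2A_n^0-2A_n^1$ to annihilate $-2A_n^1$, which is exactly your right multiplication by $E_1$), and the same block-triangular conclusion. Your explicit use of the idempotents $E_0,E_1$ with $A_n^{a'}=A_nE_{a'}$ is a clean formalization of that step, but it is not a different argument.
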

\begin{proof}
Using the fact that $A_n = A_n^0 + A_n^1$ (and similarly for
$\hA_n$), we can write

\[
A_{n+1} \, = \,
\begin{pmatrix}
A_n^0 + A_n^1 & A_n^0 + A_n^1 & A_n^0 + A_n^1 \\
A_n^0 + A_n^1 & - A_n^0 - A_n^1 & A_n^0 - A_n^1 \\
- A_n^0 & - A_n^1 & \hA_n^0 + \hA_n^1
\end{pmatrix}
\qquad (n \ge 1)
\]
and, more generally,
\[
\alpha A_{n+1} + (1 - \alpha) \hA_{n+1} \ = \
\begin{pmatrix}
A_n^0 + A_n^1 & A_n^0 + A_n^1 & A_n^0 + A_n^1 \\
A_n^0 + A_n^1 & - A_n^0 - A_n^1 & A_n^0 - A_n^1 \\
- \alpha A_n^0 & - \alpha A_n^1 & \hA_n^0 + \hA_n^1
\end{pmatrix}
\qquad (n \ge 1),
\]
with
\[
\alpha A_1 + (1 - \alpha) \hA_1 \ = \
\begin{pmatrix}
1 & 1 \\
1 & -1
\end{pmatrix}.
\]
The set of nonzero columns of $A_n^0$ is disjoint from the
corresponding set for $A_n^1$. We can thus perform elementary
column operations using, separately, the columns of $A_n^0$ and
$A_n^1$, to get
\begin{multline*}
\det (\alpha A_{n+1} + (1 - \alpha) \hA_{n+1}) = \det
\begin{pmatrix}
A_n^0 + A_n^1 & A_n^0 + A_n^1 & A_n^0 + A_n^1 \\
A_n^0 + A_n^1 & - A_n^0 - A_n^1 & A_n^0 - A_n^1 \\
- \alpha A_n^0 & - \alpha A_n^1 & \hA_n^0 + \hA_n^1
\end{pmatrix} \\
= \det
\begin{pmatrix}
A_n^0 + A_n^1 & 0 & 0 \\
A_n^0 + A_n^1 & - 2A_n^0 - 2A_n^1 & - 2A_n^1 \\
- \alpha A_n^0 & \alpha A_n^0 - \alpha A_n^1 & \alpha A_n^0 +
\hA_n^0 + \hA_n^1
\end{pmatrix} \\ 
= \det
\begin{pmatrix}
A_n^0 + A_n^1 & 0 & 0 \\
A_n^0 + A_n^1 & - 2A_n^0 - 2A_n^1 & 0 \\
- \alpha A_n^0 & \alpha A_n^0 - \alpha A_n^1 & \alpha A_n^0 +
\alpha A_n^1 + \hA_n^0 + \hA_n^1
\end{pmatrix}.
\end{multline*}
Note that in the last step we used only half of the
columns of the middle $-2A_n^0 - 2A_n^1$ to annihilate the columns
of $-2A_n^1$. The block triangular structure of the resulting
matrix implies that
\[
\det (\alpha A_{n+1} + (1 - \alpha) \hA_{n+1}) \ = \ \det (A_n)
\cdot \det (-2A_n) \cdot \det (\alpha A_n + \hA_n)
\]
for $n \ge 1$, with
\[
\det (\alpha A_1 + (1 - \alpha) \hA_1) \, = \, -2.
\]
The special cases $\alpha = 1$ and $\alpha = 0$ give the results
for $A_n$ and $\hA_n$.
\end{proof}

The following statement implies Theorem~\ref{thm:det_A} and
completes the proof of Theorem~\ref{thm:main}.

\begin{corollary}\label{cor:det_alpha_A}
For all real numbers $\alpha$ and positive integers $n$,
\begin{equation}\label{eq:det_alpha}
\det (\alpha A_n + (1 - \alpha) \hA_n) \ = \ - \prod_{\gamma \in
\Comp^B(n)} m_\gamma(\alpha)
\end{equation}
where, for a signed composition $\gamma \in \Comp^B(n)$ with part
sizes $\gamma_1, \ldots, \gamma_k$,
\[
m_{\gamma}(\alpha) \ = \ 2^{k/2} \cdot (\alpha \gamma_1 + (1 -
\alpha)) \cdot \prod_{i = 2}^{k} \gamma_i.
\]
\end{corollary}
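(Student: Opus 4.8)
The plan is to argue by induction on $n$, using the recursion of Corollary~\ref{cor:det_recursion}. Write $D_n(\alpha) := \det(\alpha A_n + (1-\alpha)\hA_n)$ for the left-hand side of (\ref{eq:det_alpha}); it is a polynomial in $\alpha$, as is the right-hand side $P_n(\alpha) := -\prod_{\gamma\in\Comp^B(n)} m_\gamma(\alpha)$. The base case $n=1$ is immediate: $\Comp^B(1) = \{(1),(\bar1)\}$ and each composition contributes $m_\gamma(\alpha) = 2^{1/2}(\alpha\cdot1 + 1-\alpha) = \sqrt2$, so $P_1(\alpha) = -2 = D_1(\alpha)$.

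For the inductive step, the first move is to rewrite the factor $\det(\alpha A_n + \hA_n)$ appearing in the recursion as an evaluation of $D_n$. Since $\alpha A_n + \hA_n = (1+\alpha)\bigl(\frac{\alpha}{1+\alpha}A_n + \frac{1}{1+\alpha}\hA_n\bigr)$, with coefficients summing to $1$, we get $\det(\alpha A_n + \hA_n) = (1+\alpha)^{d_n}\, D_n\bigl(\frac{\alpha}{1+\alpha}\bigr)$ for $\alpha\neq-1$, hence for all $\alpha$ since both sides are polynomials. Applying the inductive hypothesis $D_n = P_n$ and the identity $m_\gamma\bigl(\frac{\alpha}{1+\alpha}\bigr) = (1+\alpha)^{-1}\, 2^{k/2}(\alpha\gamma_1+1)\prod_{i=2}^{k}\gamma_i$ (where $k=k(\gamma)$ is the number of parts of $\gamma$), the powers of $1+\alpha$ cancel and I obtain
\[
\det(\alpha A_n + \hA_n) \ = \ -\prod_{\gamma\in\Comp^B(n)} 2^{k(\gamma)/2}(\alpha\gamma_1+1)\prod_{i=2}^{k(\gamma)}\gamma_i .
\]
Together with $\det(A_n) = D_n(1) = -\prod_\gamma 2^{k(\gamma)/2}\prod_i\gamma_i$ (the case $\alpha=1$ of the hypothesis, using $m_\gamma(1)=2^{k/2}\prod_i\gamma_i$) and $\det(-2A_n) = (-2)^{d_n}\det(A_n) = 2^{d_n}\det(A_n)$ (valid because $d_n=2\cdot3^{n-1}$ is even), the recursion $D_{n+1}(\alpha) = \det(A_n)\,\det(-2A_n)\,\det(\alpha A_n + \hA_n)$ expresses $D_{n+1}(\alpha)$ as $-2^{d_n}$ times $\bigl(\prod_{\gamma}2^{k(\gamma)/2}\prod_i\gamma_i\bigr)^2$ times $\prod_{\gamma}2^{k(\gamma)/2}(\alpha\gamma_1+1)\prod_{i=2}^{k(\gamma)}\gamma_i$.

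It then remains to identify this with $P_{n+1}(\alpha)$, which I would do via the recursive structure of $\Comp^B(n+1)$: prepending one letter of $L=\{0,1,*\}$ to the word $w(\sigma)\in L_*^n$ of Section~\ref{subsec:weight_matrix} gives a bijection $\Comp^B(n+1)\cong\{0,1,*\}\times\Comp^B(n)$. Prepending $*$ to the word of $\gamma=(\gamma_1,\dots,\gamma_k)$ produces $(\gamma_1+1,\gamma_2,\dots,\gamma_k)$, while prepending $0$ or $1$ produces $(1,\gamma_1,\dots,\gamma_k)$ with the new first part unbarred or barred; since $m_\gamma(\alpha)$ ignores signs, these three compositions contribute $2^{k/2}(\alpha\gamma_1+1)\prod_{i=2}^{k}\gamma_i$ (the $*$-case, which carries all the $\alpha$-dependence because $\alpha(\gamma_1+1)+1-\alpha=\alpha\gamma_1+1$) and $2^{(k+1)/2}\prod_{i=1}^{k}\gamma_i=\sqrt2\,m_\gamma(1)$ (each of the $0$- and $1$-cases). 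Taking the product over all $\gamma\in\Comp^B(n)$ and using $|\Comp^B(n)|=d_n$ and $m_\gamma(1)=2^{k/2}\prod_i\gamma_i$ reproduces exactly the expression just derived for $D_{n+1}(\alpha)$, closing the induction; the cases $\alpha=1$ and $\alpha=0$ then give Theorem~\ref{thm:det_A} and the companion formula for $\det(\hA_n)$.

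The computation is essentially routine once these two ideas are in place. The steps requiring care are (a) the homogenization rewriting $\det(\alpha A_n + \hA_n)$ as $(1+\alpha)^{d_n}D_n(\alpha/(1+\alpha))$, with the remark that the identity — first obtained for $\alpha\neq-1$ — extends to $\alpha=-1$ by polynomiality; and (b) the bookkeeping in the product over $\Comp^B(n+1)$, in particular noticing that the $\alpha$-dependent factor $\alpha\gamma_1+1-\alpha$ of $m_\gamma$ concerns only the first part, so that growing the first part (the $*$-case) is precisely what transports the $\alpha$-dependence from level $n$ to level $n+1$, and that the individually irrational powers $2^{k(\gamma)/2}$ recombine into the correct integer powers of $2$ and $\sqrt2$. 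I do not expect a genuine obstacle beyond this.
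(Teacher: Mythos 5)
Your proof is correct and follows essentially the same route as the paper's: the recursion of Corollary~\ref{cor:det_recursion}, the homogenization $\det(\alpha A_n + \hA_n) = (1+\alpha)^{d_n}\Delta_n\bigl(\frac{\alpha}{1+\alpha}\bigr)$, and the three-fold decomposition of $\Comp^B(n+1)$ by prepending $0$, $1$ or $*$, which matches the determinant recursion against the product recursion for the $m_\gamma(\alpha)$. The only (minor, harmless) difference is that you substitute the inductive hypothesis directly and invoke polynomiality at the homogenization step, whereas the paper first restricts to $\alpha>0$ and extends by polynomiality at the end.
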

\begin{proof}
Setting
\[
\Delta_n(\alpha) \ := \ \det (\alpha A_n + (1 - \alpha) \hA_n),
\]
Corollary~\ref{cor:det_recursion} implies that, for $\alpha \ne
-1$,
\[
\Delta_{n+1}(\alpha) \ = \ (2(\alpha + 1))^{d_n} \cdot
\Delta_n(1)^2 \cdot \Delta_n \left(\frac{\alpha}{\alpha +
1}\right)
\]
for $n \ge 1$ and
\[
\Delta_1(\alpha) \, = \, -2.
\]
We have used here the fact that $(-1)^{d_n} = 1$, since $d_n = 2
\cdot 3^{n-1}$ is even for all $n \ge 1$.

Consider now a signed composition $\gamma \in \Comp^B(n)$ with a
corresponding word $w \in L_*^n$. It gives rise to three signed
compositions of $n+1$, corresponding to the words $0w, 1w, *w \in
L_*^{n+1}$. Using the notation $m_w(\alpha)$ instead of
$m_\gamma(\alpha)$, it is clear that
\[
m_{0w}(\alpha) \ = \ m_{1w}(\alpha) \ = \ 2^{(k+1)/2} \cdot
(\alpha \cdot 1 + (1 - \alpha)) \cdot \prod_{i = 1}^{k} \gamma_i \
= \ 2^{1/2} \cdot m_w(1),
\]
whereas
\[
m_{*w}(\alpha) \ = \ 2^{k/2} \cdot (\alpha \cdot (\gamma_1 + 1) +
(1 - \alpha)) \cdot \prod_{i = 2}^{k} \gamma_i \ = \ (\alpha + 1)
\cdot m_w \left(\frac{\alpha}{\alpha + 1}\right).
\]
Therefore
\[
m_{0w}(\alpha) \cdot m_{1w}(\alpha) \cdot m_{*w}(\alpha) \ = \
2(\alpha + 1) \cdot m_w(1)^2 \cdot m_w \left(\frac{\alpha}{\alpha
+ 1}\right),
\]
which implies that, at least for $\alpha > 0$, both sides of
equation~(\ref{eq:det_alpha}) satisfy the same recursion (and,
clearly, also the same initial conditions for $n = 1$). The two
sides are therefore equal for all $\alpha > 0$ and (being
polynomials in $\alpha$), are actually equal for all $\alpha$.
\end{proof}

\section{Knuth classes and involutions}
\label{sec:Knuth}

This section discusses irreducible $B_n$-characters again,
describes a different interpretation of Theorem~\ref{thm:roiB} in
terms of Knuth classes of type $B$ and confirms that the set of
involutions in $B_n$ is fine for the character of the Gelfand
model for this group.

We first recall the definition of the natural analogue of the
Robinson--Schensted correspondence for the group $B_n$
\cite[pages~145--146]{Sta82} \cite{SW85}. Let $w = (w(1),
w(2),\dots,w(n)) \in B_n$ and let $(w(a_1), w(a_2),\dots,w(a_k))$
be the subsequence of unbarred elements of $w$. Applying
Schensted's correspondence to the two-line array 
$\left(\begin{array}{cccc} a_1 & a_2 & \cdots & a_k \\ w(a_1) & w(a_2) & \cdots & w(a_k) \end{array}\right)$ 
results in a pair $(P^+, Q^+)$ of Young tableaux of the
same shape $\lambda \vdash k$. Similarly, from the subsequence of
barred elements of $w$ and the corresponding two-line array we get
a pair $(P^-, Q^-)$ of Young tableaux of the same shape $\mu
\vdash n-k$. Then $P^B(w) = (P^+, P^-)$ and $Q^B(w) = (Q^+, Q^-)$
are standard Young bitableaux of shape $(\lambda, \mu) \vdash n$
and the map which assigns the pair $(P^B(w), Q^B(w))$ to $w$ is a
bijection from the group $B_n$ to the set of pairs of standard
Young bitableaux of the same shape and size $n$.

The following two properties of this map are explicit in
\cite[Section~8]{SW85} and implicit in \cite[page~146]{Sta82},
respectively.

\begin{proposition} \label{BRS_properties}
For every $w \in B_n$:
\begin{enumerate}[(a)]
\item $P^B(w^{-1}) = Q^B(w)$, 
\item $\cDes(w) = \cDes(Q^B(w))$.
\end{enumerate}
\end{proposition}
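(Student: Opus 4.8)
The plan is to treat the two parts separately, reducing each to a classical property of the ordinary Robinson--Schensted correspondence applied, in turn, to the unbarred and to the barred subwords of $w$. Recall that $P^B(w)=(P^+,P^-)$ and $Q^B(w)=(Q^+,Q^-)$ are obtained by running Schensted's algorithm on the two two-line arrays extracted from $w$: $\binom{a_1\ \cdots\ a_k}{w(a_1)\ \cdots\ w(a_k)}$ (the unbarred positions $a_1<\cdots<a_k$ with their values) and the analogous array for the barred positions (with bars forgotten).

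For part~(a): I would first observe that the unbarred (resp. barred) positions of $w^{-1}$ are precisely the unbarred (resp. barred) \emph{values} of $w$, and that at a position $i=w(a_j)$ one has $w^{-1}(i)=a_j$; hence the two-line array of the unbarred part of $w^{-1}$ is exactly the one for the unbarred part of $w$ with its two rows interchanged and its columns re-sorted by the new top row, and likewise for the barred parts. I then invoke the Robinson--Schensted(--Knuth) symmetry theorem, which says this transposition-and-resorting operation interchanges the insertion and recording tableaux, to conclude $P^{\pm}(w^{-1})=Q^{\pm}(w)$, and therefore $P^B(w^{-1})=Q^B(w)$.

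For part~(b): the first step is to check that the sign vectors of $\cDes(w)$ and $\cDes(Q^B(w))$ coincide. By construction an integer $i$ lies in $Q^+$ iff $i$ is an unbarred position of $w$, i.e. iff $\veps_w(i)=+$ (and in $Q^-$ iff $\veps_w(i)=-$), so the sign vector of $\cDes(Q^B(w))$ equals $\veps_w$, which is also the sign vector of $\cDes(w)$. Consequently both signed sets contain $n$, both contain every ``sign-change'' index $i\in[n-1]$ with $\veps_w(i)\ne\veps_w(i+1)$, and their $\veps$-labels agree wherever their underlying sets do; it remains only to compare membership for indices $s\in[n-1]$ with $\veps_w(s)=\veps_w(s+1)$. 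For such $s$, positions $s$ and $s+1$ lie in the same factor ($Q^+$ if unbarred, $Q^-$ if barred), so $s\in\cDes(Q^B(w))$ iff $s+1$ sits in a strictly lower row than $s$ there, while $s\in\cDes(w)$ iff $s\in\Des(|w|)$. The crux is then that, since $s$ and $s+1$ are consecutive integers both occurring as unbarred positions, they are consecutive entries $a_j,a_{j+1}$ in the list $a_1<\cdots<a_k$, and $Q^+$ is, after standardizing its entries, the recording tableau of the word $w(a_1)\cdots w(a_k)$. The basic property $\Des(w)=\Des(Q(w))$ of ordinary Robinson--Schensted recalled in Section~\ref{subsec:perm} then gives that $a_{j+1}$ lies strictly below $a_j$ iff $w(a_j)>w(a_{j+1})$, i.e. iff $|w(s)|>|w(s+1)|$, i.e. iff $s\in\Des(|w|)$; the barred case is identical with $Q^-$. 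This matches the two membership conditions and completes part~(b).

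Neither part is deep: the content lies in the bookkeeping. I expect the main point requiring care to be the reduction in part~(b) — namely verifying that a descent between consecutive integers $s,s+1$ lying in a common factor $Q^+$ or $Q^-$ is genuinely governed by the classical descent-preservation property after one standardizes the entries of that factor (so that the recording tableau of the subword becomes that of a permutation), and that the sign-change indices are absorbed uniformly into both signed sets. One should also note in passing that replacing the order $<_r$ by the natural order on $[n]$ introduces no discrepancy here, since within a single sign class the two orders agree, which is exactly why the same-sign case collapses to $\Des(|w|)$.
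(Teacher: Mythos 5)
Your argument is correct and complete. Note, however, that the paper does not actually prove this proposition: it simply records that part (a) is explicit in Stanton--White \cite{SW85} and part (b) is implicit in Stanley \cite{Sta82}, and moves on. What you have supplied is a self-contained derivation from the type $A$ facts already quoted in the paper, and both reductions are sound: in (a), the unbarred (resp.\ barred) two-line array of $w^{-1}$ is indeed the row-swapped, column-resorted version of the corresponding array for $w$, so the Robinson--Schensted symmetry theorem applies factor by factor; in (b), the sign vectors of $\cDes(w)$ and $\cDes(Q^B(w))$ both equal $\veps_w$, the sign-change indices lie in both underlying sets automatically (for $\cDes(w)$ via the order $<_r$ or the explicit barred-to-unbarred clause, for $\cDes(Q^B(w))$ via the different-tableaux clause), and the remaining same-sign indices reduce, after standardization, to the classical identity $\Des(u)=\Des(Q(u))$ applied to each subword. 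The two points you flag as needing care -- that standardization preserves both the insertion process and the relative row positions of consecutive recording entries, and that $<_r$ agrees with the natural order within a single sign class -- are exactly the right ones, and you handle both. The only thing your write-up buys beyond the paper is transparency: a reader need not chase the references to see why the proposition holds.
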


A \emph{Knuth class} of (type $B$ and) shape $(\lambda, \mu)$ is a
set of the form $\{w \in B_n: P^B(w) = T\}$ for some fixed $T \in
\SYT(\lambda, \mu)$ (these Knuth classes should not be confused
with the ones considered in the study of Kazdhan-Lusztig cells of
type $B$; see \cite{Bon10, Pie09}). The first part of the
following corollary, already discussed in Section~\ref{sec:proof},
is a restatement of Theorem~\ref{thm:roiB}. The second part
follows from the first and Proposition~\ref{BRS_properties} (b).

\begin{corollary}
\label{cor:irrB} For every bipartition $(\lambda, \mu) \vdash n$,
the following are fine sets for the irreducible $B_n$-character
$\chi^{\lambda,\mu}$:
  \begin{enumerate}[(i)]
    \item The set of standard Young bitableaux of shape $(\lambda,
               \mu)$.
    \item All Knuth classes of shape $(\lambda, \mu)$.
\end{enumerate}
\end{corollary}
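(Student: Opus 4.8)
The plan is to prove Corollary~\ref{cor:irrB} by reducing both parts to results already established. For part~(i), there is nothing new to prove: Theorem~\ref{thm:roiB} states precisely that $\SYT(\lambda,\mu)$, endowed with the colored descent map $\cDes$, satisfies the defining identity \eqref{eq:fineB} for the character $\chi^{\lambda,\mu}$, so it is a fine set by Definition~\ref{def:fineB}. I would simply remark this explicitly.

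For part~(ii), the idea is to transport the fine-set structure along the type $B$ Robinson--Schensted correspondence. Fix $T\in\SYT(\lambda,\mu)$ and let $\kK_T=\{w\in B_n:P^B(w)=T\}$ be the corresponding Knuth class, equipped with the colored descent map $w\mapsto\cDes(w)$. By Proposition~\ref{BRS_properties}(a), the map $w\mapsto Q^B(w)$ restricts to a bijection from $\kK_T$ onto $\SYT(\lambda,\mu)$: indeed $P^B(w)=T$ is equivalent to $Q^B(w^{-1})=T$, and $w\mapsto w^{-1}$ is a bijection of $B_n$, while $w\mapsto (P^B(w),Q^B(w))$ is a bijection onto pairs of bitableaux of the same shape, so for each $Q\in\SYT(\lambda,\mu)$ there is a unique $w\in B_n$ with $P^B(w)=T$ and $Q^B(w)=Q$. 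By Proposition~\ref{BRS_properties}(b), this bijection intertwines the descent maps: $\cDes(w)=\cDes(Q^B(w))$. Hence the distribution of $\cDes$ over $\kK_T$ coincides with its distribution over $\SYT(\lambda,\mu)$, i.e.\ $|\{w\in\kK_T:\cDes(w)=\sigma\}|=|\{Q\in\SYT(\lambda,\mu):\cDes(Q)=\sigma\}|$ for every $\sigma\in\Sigma^B(n)$.

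With this equidistribution in hand, the conclusion follows formally: for every signed composition $\gamma\in\Comp^B(n)$,
\[
\sum_{w\in\kK_T}\wei_\gamma(\cDes(w))
=\sum_{\sigma\in\Sigma^B(n)}\wei_\gamma(\sigma)\,|\{w\in\kK_T:\cDes(w)=\sigma\}|
=\sum_{Q\in\SYT(\lambda,\mu)}\wei_\gamma(\cDes(Q))
=\chi^{\lambda,\mu}(\gamma),
\]
where the last equality is Theorem~\ref{thm:roiB}. Thus $\kK_T$ is a fine set for $\chi^{\lambda,\mu}$, as claimed. Alternatively, one could invoke Theorem~\ref{thm:char_to_cDes} directly: $\SYT(\lambda,\mu)$ is fine for $\chi^{\lambda,\mu}$, and any set over which $\cDes$ has the same distribution is then automatically fine for the same character; the RS-bijection supplies exactly this.

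I do not anticipate a real obstacle here, since all the substantive work has been done: Theorem~\ref{thm:roiB} gives the base case and Proposition~\ref{BRS_properties} supplies the descent-preserving bijection. The only point requiring a little care is the direction of the bijection in part~(a): it is $P^B(w^{-1})=Q^B(w)$ that makes a Knuth class (defined via $P^B$) biject with $\SYT(\lambda,\mu)$ via $Q^B$, after passing through $w\mapsto w^{-1}$; one should make sure the shape $(\lambda,\mu)$ is the same on both sides, which it is because $P^B(w)$ and $Q^B(w)$ always have equal shape.
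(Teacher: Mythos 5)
Your proposal is correct and follows essentially the same route as the paper, which simply notes that part (i) is a restatement of Theorem~\ref{thm:roiB} and that part (ii) follows from (i) together with Proposition~\ref{BRS_properties}(b); you merely spell out the descent-preserving bijection $w \mapsto Q^B(w)$ from a Knuth class onto $\SYT(\lambda,\mu)$ that the paper leaves implicit. (The appeal to Proposition~\ref{BRS_properties}(a) is not actually needed, since fixing $P^B(w)=T$ already makes $Q^B$ a bijection onto same-shape bitableaux, but this does no harm.)
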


We recall that a \emph{Gelfand model} for a group $G$ is any
representation of $G$ which is equivalent to the multiplicity-free
direct sum of its irreducible representations. The following
proposition is a $B_n$-analogue of the corresponding statement
\cite[Proposition~1.5]{APR08} (see also
\cite[Proposition~3.12~(iii)]{AR15}) for $\mathfrak{S}_n$.

\begin{proposition} \label{prop:GelfandB}
The set of involutions in $B_n$, endowed with the standard signed
descent map, is fine for the character of the Gelfand model of
$B_n$.
\end{proposition}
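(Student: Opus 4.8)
The plan is to prove Proposition~\ref{prop:GelfandB} by combining Theorem~\ref{thm:main} with the $B_n$-analogue of the Robinson--Schensted correspondence and the known Frobenius characteristic of the Gelfand model. By Theorem~\ref{thm:main}, it suffices to show that
\[
\sum_{w \in I_n} F_{\cDes(w)}(x, y) \ = \ \sum_{(\lambda, \mu) \vdash n} s_\lambda(x) s_\mu(y),
\]
where $I_n$ denotes the set of involutions in $B_n$ and the right-hand side is the Frobenius characteristic of the Gelfand model (the multiplicity-free sum of all $\ch(\chi^{\lambda,\mu}) = s_\lambda(x) s_\mu(y)$). So the entire task reduces to identifying the left-hand sum.

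First I would recall from Proposition~\ref{BRS_properties} that the map $w \mapsto (P^B(w), Q^B(w))$ is a bijection from $B_n$ to pairs of standard Young bitableaux of the same shape, with $P^B(w^{-1}) = Q^B(w)$. Hence $w$ is an involution if and only if $P^B(w) = Q^B(w)$, so the $B_n$-RS correspondence restricts to a bijection between $I_n$ and the set $\bigsqcup_{(\lambda,\mu) \vdash n} \SYT(\lambda,\mu)$, sending an involution $w$ to the single bitableau $Q^B(w)$. Moreover, by Proposition~\ref{BRS_properties}(b) this bijection preserves the signed descent set: $\cDes(w) = \cDes(Q^B(w))$. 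Therefore
\[
\sum_{w \in I_n} F_{\cDes(w)}(x, y) \ = \ \sum_{(\lambda,\mu) \vdash n} \ \sum_{Q \in \SYT(\lambda,\mu)} F_{\cDes(Q)}(x, y) \ = \ \sum_{(\lambda,\mu) \vdash n} s_\lambda(x) s_\mu(y),
\]
where the last equality is exactly Proposition~\ref{prop:schurprod}. This is precisely $\ch$ of the Gelfand model, so by Theorem~\ref{thm:main} the set $I_n$, with the standard signed descent map, is a fine set for the Gelfand model character, as claimed.

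There is essentially one substantive point to verify carefully, and it is the main (albeit minor) obstacle: the claim that $w \in B_n$ is an involution precisely when $P^B(w) = Q^B(w)$. This follows formally from Proposition~\ref{BRS_properties}(a) together with injectivity of the correspondence — if $w^2 = 1$ then $w = w^{-1}$, so $(P^B(w), Q^B(w)) = (P^B(w^{-1}), Q^B(w^{-1})) = (Q^B(w), P^B(w))$, forcing $P^B(w) = Q^B(w)$; conversely if $P^B(w) = Q^B(w)$ then $w$ and $w^{-1}$ have the same image under the bijection, so $w = w^{-1}$. One should also note that this is the standard "symmetry" property of the type $B$ correspondence, entirely analogous to the classical fact for $\mathfrak{S}_n$. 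Everything else is a direct invocation of results already established in the paper (Theorem~\ref{thm:main} and Proposition~\ref{prop:schurprod}), so the proof is short.
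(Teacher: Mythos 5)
Your proof is correct and follows essentially the same route as the paper's: the paper combines Corollary~\ref{cor:irrB} (the bitableaux of size $n$ form a fine set for the Gelfand model) with the $\cDes$-preserving bijection between involutions and single bitableaux coming from Proposition~\ref{BRS_properties}, which is exactly the content of your argument phrased through Theorem~\ref{thm:main} and Proposition~\ref{prop:schurprod}. The point you flag as needing care --- that $w$ is an involution iff $P^B(w)=Q^B(w)$, via $P^B(w^{-1})=Q^B(w)$ and injectivity --- is handled correctly.
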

\begin{proof}
Corollary~\ref{cor:irrB} implies that the set of standard Young
bitableaux of size $n$ is fine for the character of the Gelfand
model of $B_n$. Moreover, as a direct consequence of
Proposition~\ref{BRS_properties}, the signed descent set is
equidistributed over the set of standard Young bitableaux of size
$n$ and the set of involutions in $B_n$. The proof follows from
these two statements.
\end{proof}

\begin{example}
We confirm this statement for $n=2$. The involutions in 
$B_2$ are the signed permutations $(1, 2)$, $(2, 1)$, $(\bar{1}, 2)$, $(1, \bar{2})$, $(\bar{1}, \bar{2})$ and 
$(\bar{2}, \bar{1})$. Therefore,
\begin{multline*}
  \sum_{w \in B_2: \, w^{-1} = w} F_{\cDes(w)} (x, y) \\ = 
  F_{(2)}(x,y) \, + \, F_{(1,1)}(x,y) \, + \, F_{(\bar{1},1)}(x,y)
  \, + \, F_{(1,\bar{1})}(x,y) \ +
  F_{(\bar{2})}(x,y) \, + \, F_{(\bar{1},\bar{1})} (x,y) \\
  = s_{(2)}(x) \, + \, s_{(1,1)}(x) \, + \, s_{(1)}(x) s_{(1)}
      (y) \, + \, s_{(2)}(y) \, + \, s_{(1,1)}(y) \\
  = \ch \ ( \chi^{((2),\varnothing)} \, + 
      \, \chi^{((1,1),\varnothing)} \, + \, \chi^{((1),(1))}
      \, + \, \chi^{(\varnothing,(2))} \, + \, 
      \chi^{(\varnothing,(1,1))}), 
\end{multline*}
where we have indexed functions $F_\sigma(x,y)$ by signed 
compositions, rather than signed sets, and the second equality 
follows by direct computation or use of 
Proposition~\ref{prop:schurprod}.
\end{example}

An \emph{inverse signed descent class} in $B_n$ is a set of the
form $\{ w \in B_n: \cDes(w^{-1}) = \sigma\}$ for some $\sigma \in
\Sigma^B(n)$. We postpone the definition of flag-major index to
Section~\ref{sec:flag-inv}, where the representation of $B_n$
corresponding to the fine set (ii) in the following proposition
will also be described.

\begin{proposition}
\label{prop:ISDC} The following subsets of $B_n$ are fine for some
$B_n$-characters:
\begin{enumerate}[(i)]
\item All inverse signed descent classes. \item The set
of elements of $B_n$ whose inverses have a given flag-major index.
\end{enumerate}
\end{proposition}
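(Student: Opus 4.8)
The strategy is to exhibit, for each of the two families, an explicit $B_n$-character whose Frobenius characteristic equals the signed quasisymmetric generating function $\sum_{b\in\bB}F_{\cDes(b)}(x,y)$ of the set in question, and then invoke Theorem~\ref{thm:main}. Equivalently, by Corollary~\ref{cor:main}, it suffices to show that the relevant quasisymmetric function is a Schur-positive element of $\Lambda(x)\otimes\Lambda(y)$, or that the distribution of $\cDes$ over $\bB$ matches a nonnegative integer combination of the distributions over the $\SYT(\lambda,\mu)$.

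\textbf{Part (i): inverse signed descent classes.} Fix $\tau\in\Sigma^B(n)$ and let $\bB_\tau=\{w\in B_n:\cDes(w^{-1})=\tau\}$, with the map $\cDes$ restricted from $B_n$. The plan is to use the $B_n$-Robinson--Schensted correspondence of Proposition~\ref{BRS_properties}: the map $w\mapsto(P^B(w),Q^B(w))$ is a bijection from $B_n$ to pairs of standard Young bitableaux of equal shape, with $P^B(w^{-1})=Q^B(w)$ by part~(a) and $\cDes(w)=\cDes(Q^B(w))$ by part~(b). Hence $\cDes(w^{-1})=\cDes(Q^B(w^{-1}))=\cDes(P^B(w))$. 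Therefore $\bB_\tau$ is in bijection with the set of pairs $(P,Q)$ of standard Young bitableaux of equal shape with $\cDes(P)=\tau$, and under this bijection $\cDes(w)=\cDes(Q)$. Summing over such pairs and grouping by the common shape $(\lambda,\mu)$, we obtain
\[
\sum_{w\in\bB_\tau}F_{\cDes(w)}(x,y)\ =\ \sum_{(\lambda,\mu)\vdash n}\bigl|\{P\in\SYT(\lambda,\mu):\cDes(P)=\tau\}\bigr|\sum_{Q\in\SYT(\lambda,\mu)}F_{\cDes(Q)}(x,y)\ =\ \sum_{(\lambda,\mu)\vdash n}a_{\lambda,\mu}\,s_\lambda(x)s_\mu(y),
\]
using Proposition~\ref{prop:schurprod}, where $a_{\lambda,\mu}=|\{P\in\SYT(\lambda,\mu):\cDes(P)=\tau\}|\ge 0$. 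This is manifestly Schur-positive, so by Theorem~\ref{thm:main} (via Corollary~\ref{cor:main}) $\bB_\tau$ is a fine set for the $B_n$-character $\sum_{(\lambda,\mu)}a_{\lambda,\mu}\chi^{\lambda,\mu}$. (This is the exact $B_n$-analogue of the $\mathfrak{S}_n$ argument in \cite{AR15}; the underlying representation is, up to the combinatorial bijection, a signed analogue of the inverse-descent-class module.)

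\textbf{Part (ii): fixed flag-major index of the inverse.} Here the plan is to reduce to an equidistribution statement. Write $\bB=\{w\in B_n:\fmaj(w^{-1})=m\}$ for the fixed value $m$. First one shows that $\fmaj(w^{-1})$ depends only on $\cDes(w)$ — indeed $\fmaj$ of a signed permutation is determined by its signed descent composition (this will be spelled out once $\fmaj$ is defined in Section~\ref{sec:flag-inv}), so $\fmaj(w^{-1})=\fmaj(\cDes(w^{-1}))$ as a function of the signed set $\cDes(w^{-1})=\cDes(P^B(w))$. Thus $\bB$ is a union of inverse signed descent classes: $\bB=\bigsqcup_{\tau:\fmaj(\tau)=m}\bB_\tau$. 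Applying Part~(i) to each $\bB_\tau$ and summing, $\sum_{w\in\bB}F_{\cDes(w)}(x,y)$ is a nonnegative integer combination of the $s_\lambda(x)s_\mu(y)$, hence Schur-positive, and Theorem~\ref{thm:main} again yields that $\bB$ is a fine set. The representation alluded to in the statement is the one on the corresponding flag-major-index component, whose description is deferred to Section~\ref{sec:flag-inv}; identifying it explicitly is a matter of collecting the multiplicities $a_{\lambda,\mu}=\langle F_\bB(x,y),s_\lambda(x)s_\mu(y)\rangle$.

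\textbf{Main obstacle.} The one genuine input beyond bookkeeping is the claim that $\fmaj(w^{-1})$ is a function of $\cDes(w)$ alone (equivalently, that $\fmaj$ is constant on signed descent classes, so that fixing it carves out a union of such classes). Everything else is a direct transport of structure through the type-$B$ Robinson--Schensted map and an appeal to Theorem~\ref{thm:main}; the Schur-positivity is automatic from the tableau count being a nonnegative integer. Thus the proof hinges on recalling the combinatorial description of $\fmaj$ and its invariance under the operations that preserve $\cDes$, which belongs to Section~\ref{sec:flag-inv}.
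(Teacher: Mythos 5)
Your proposal is correct and follows essentially the same route as the paper: for (i) you decompose an inverse signed descent class via the type-$B$ Robinson--Schensted map (using $P^B(w^{-1})=Q^B(w)$) into Knuth classes, i.e.\ you compute its quasisymmetric generating function as a nonnegative integer combination of the $s_\lambda(x)s_\mu(y)$, and for (ii) you reduce to (i) using that $\fmaj$ is constant on signed descent classes. One small slip: in part (ii) the clause ``$\fmaj(w^{-1})$ depends only on $\cDes(w)$'' should read $\cDes(w^{-1})$, as your very next line correctly states.
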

\begin{proof}
By Proposition~\ref{BRS_properties}, the signed descent set
$\cDes$ is fixed over an inverse Knuth class. It follows that
inverse signed descent classes are unions of Knuth classes, which
are fine sets by Corollary~\ref{cor:irrB}, and hence are fine sets
as well. Morever, subsets of $B_n$ with fixed signed descent set
have fixed flag-major index. As a result, the set of elements of
$B_n$ whose inverses have a given flag-major index is a union of
inverse signed descent classes and hence is fine, as a disjoint
union of fine sets.
\end{proof}

\section{Coinvariant algebra and flag statistics}
\label{sec:flag-inv}

This section provides a $B_n$-analogue of a result essentially due
to Roichman~\cite{Roi00}, which describes explicitly fine sets for
the characters of the action of $\mathfrak{S}_n$ on the
homogeneous components of the coinvariant algebra of type $A$.

Throughout this section we denote by $P_n$ the polynomial ring
$F[x_1, \ldots, x_n]$ in $n$ variables over a field $F$ of
characteristic zero. The symmetric group $\mathfrak{S}_n$ acts on
$P_n$ by permuting the  variables. Let $I_n$ be the ideal of $P_n$
generated by the $\mathfrak{S}_n$-invariant (symmetric)
polynomials with zero constant term. The group $\mathfrak{S}_n$
acts on the quotient ring $P_n/I_n$, known as the
\emph{coinvariant algebra} of $\mathfrak{S}_n$, and the resulting
representation is isomorphic to the regular representation; see,
for instance, \cite[Section~3.6]{Hum90}. Since $P_n$ and $I_n$ are
naturally graded by degree and the action of $\mathfrak{S}_n$
respects this grading, the coinvariant algebra is also graded by
degree and $\mathfrak{S}_n$ acts on each homogeneous component.
Denote by $\chi_{n, k}$ the character of the
$\mathfrak{S}_n$-action on the $k$-th homogeneous component of
$P_n/I_n$, for $0 \le k \le \binom{n}{2}$.

Let $w \in \mathfrak{S}_n$ be a permutation. Recall \cite[Sections~1.3--1.4]{StaEC1} that $\inv(w)$ denotes the number
of inversions of $w$, $\Des(w)$ its set of descents, and $\maj(w)$
its major index (the sum of the elements of $\Des(w)$).
Note also that $\inv(w) = \inv(w^{-1})$ for $w \in \mathfrak{S}_n$.
\begin{theorem} {\rm (\cite{Roi00} \cite[Theorem~3.7]{AR15})}
\label{thm:coinvA} For $0 \le k \le \binom{n}{2}$, each of the
following subsets of $\mathfrak{S}_n$, endowed with the standard
descent map, is a fine set for the $\mathfrak{S}_n$-character
$\chi_{n, k}$:
\begin{enumerate}[(i)]
\item 
$\{w \in \mathfrak{S}_n: \inv(w^{-1}) = k\}$; 
\item
$\{w \in \mathfrak{S}_n: \maj(w^{-1}) = k\}$.
\end{enumerate}
\end{theorem}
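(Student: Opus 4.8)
The strategy is to reduce Theorem~\ref{thm:coinvA} to the type~$A$ machinery already in place, namely Theorem~\ref{thm:mainA} and Corollary~\ref{cor:mainA}, by identifying the Frobenius characteristic of $\chi_{n,k}$ and matching it against the quasisymmetric generating function of $\Des$ over each of the two proposed sets. First I would recall the classical fact (Lusztig--Stanley; see \cite{St79}) that
\[
\sum_{k \ge 0} \ch(\chi_{n,k}) \, q^k \ = \ \sum_{\lambda \vdash n} s_\lambda(x) \sum_{Q \in \SYT(\lambda)} q^{\maj(Q)},
\]
so that $\ch(\chi_{n,k}) = \sum_{\lambda \vdash n} a^{(k)}_\lambda s_\lambda(x)$ where $a^{(k)}_\lambda = |\{Q \in \SYT(\lambda): \maj(Q) = k\}|$. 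By Theorem~\ref{thm:mainA} it then suffices to show, for each of the two sets $\bB$ in question, that
\[
\sum_{b \in \bB} F_{n, \Des(b)}(x) \ = \ \sum_{\lambda \vdash n} a^{(k)}_\lambda \, s_\lambda(x),
\]
or, equivalently by Corollary~\ref{cor:mainA}(iii), that the descent set is equidistributed over $\bB$ and over the multiset $\bigcup_{\lambda} a^{(k)}_\lambda$ copies of $\SYT(\lambda)$.

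For part~(ii), the set $\{w \in \mathfrak{S}_n : \maj(w^{-1}) = k\}$, the cleanest route is through the Robinson--Schensted correspondence together with its fundamental symmetry $Q(w^{-1}) = P(w)$ and the descent property $\Des(w) = \Des(Q(w))$. Since $\maj$ depends only on the descent set, $\maj(w^{-1}) = \maj(P(w))$, and as $w$ ranges over $\mathfrak{S}_n$ the tableau $P(w)$ ranges over $\SYT(\lambda)$ exactly $f^\lambda$ times for each $\lambda$ — but crucially $Q(w)$ is then free, and $\Des(w) = \Des(Q(w))$. Hence partitioning by the shape $\lambda$ and the value of $\maj(P(w)) = k$, the elements $w$ with $\maj(w^{-1}) = k$ and $P$-shape $\lambda$ biject with pairs $(P, Q)$ with $P \in \SYT(\lambda)$, $\maj(P) = k$, $Q \in \SYT(\lambda)$ arbitrary, and the descent statistic reads off $Q$ only. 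This yields
\[
\sum_{\maj(w^{-1}) = k} z^{\Des(w)} \ = \ \sum_{\lambda \vdash n} a^{(k)}_\lambda \sum_{Q \in \SYT(\lambda)} z^{\Des(Q)},
\]
which is precisely condition (iii) of Corollary~\ref{cor:mainA}, so $\bB$ is fine for $\chi_{n,k}$.

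For part~(i), $\{w \in \mathfrak{S}_n : \inv(w^{-1}) = k\}= \{w : \inv(w) = k\}$ since $\inv(w) = \inv(w^{-1})$, and the claim is that the descent set is equidistributed over elements of fixed length $k$ and over $\bigcup_\lambda a^{(k)}_\lambda$ copies of $\SYT(\lambda)$. This is exactly the Foata--Sch\"utzenberger equidistribution of $(\inv, \Des)$ with $(\maj, \Des)$ — or more precisely the refinement that the number of $w$ with $\inv(w) = k$ and $\Des(w) = S$ equals the number with $\maj(w) = k$ and $\Des(w) = S$ (both count, by RSK plus the maj-generating function for tableaux, the quantity $\sum_\lambda a^{(k)}_\lambda |\{Q \in \SYT(\lambda): \Des(Q) = S\}|$). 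So part~(i) follows from part~(ii) combined with this classical fact, or directly from \cite{FS78, St79} as recorded already in the discussion after Theorem~\ref{thm:mainA}; indeed the authors note there that the equivalence of the Lusztig--Stanley and Foata--Sch\"utzenberger theorems is itself an instance of equality of fine characters. The main obstacle, such as it is, lies in pinning down the RSK bookkeeping in part~(ii) — making sure that freezing $P(w)$ (equivalently $Q(w^{-1})$) to have a prescribed major index leaves $Q(w)$ completely free while the descent set of $w$ is governed entirely by $Q(w)$; once that is clean, both parts drop out of Corollary~\ref{cor:mainA}. (This is the type~$B$ paper's recurring template: reduce a combinatorial fine-set claim to a descent-equidistribution statement and cite the RSK/Lusztig--Stanley/Foata--Sch\"utzenberger package.)
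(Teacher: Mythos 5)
Your argument is correct in substance but reverses the paper's order of reduction and is more self-contained. The paper's proof simply cites \cite[Theorem~1]{Roi00} for part (i) --- Roichman's theorem that $\{w : \inv(w) = k\}$ is a fine set for $\chi_{n,k}$, proved there via Schubert polynomials and the Kazhdan--Lusztig basis --- and then deduces (ii) from (i) using \cite[Theorem~1]{FS78}. You instead prove (ii) from first principles: the Lusztig--Stanley fake-degree formula $\langle \chi_{n,k}, \chi^\lambda\rangle = |\{Q \in \SYT(\lambda) : \maj(Q) = k\}|$, combined with the RSK bookkeeping ($P(w) = Q(w^{-1})$ and $\Des(w) = \Des(Q(w))$, hence $\maj(w^{-1}) = \maj(P(w))$ while $\Des(w)$ is read off $Q(w)$ alone), yields condition (iii) of Corollary~\ref{cor:mainA} directly; you then obtain (i) from (ii) by the same Foata--Sch\"utzenberger bridge, run in the opposite direction. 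Both routes are legitimate; yours trades the citation of Roichman's result for the equally classical Lusztig--Stanley input and makes the RSK step explicit, which closely mirrors how the paper handles the type~$B$ analogue (Theorem~\ref{thm:coinvB}, where Stembridge's formula plays the role of Lusztig--Stanley and the deduction goes from $\fmaj$ to $\finv$).

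One slip needs fixing: the ``refinement'' you invoke for part (i) is stated with the wrong descent set. It is false that the number of $w$ with $\inv(w) = k$ and $\Des(w) = S$ equals the number with $\maj(w) = k$ and $\Des(w) = S$; the latter is $0$ unless $\sum_{i \in S} i = k$, since $\maj(w)$ is a function of $\Des(w)$ (already for $n = 3$, $k = 2$, $S = \{1\}$ the two counts are $1$ and $0$). What you need, and what \cite[Theorem~1]{FS78} actually asserts (as quoted in the paper's own proof), is the joint equidistribution of $(\inv(w), \Des(w^{-1}))$ with $(\maj(w), \Des(w^{-1}))$. After substituting $w \mapsto w^{-1}$ and using $\inv(w) = \inv(w^{-1})$, this is exactly what transports the $\Des$-distribution from $\{w : \maj(w^{-1}) = k\}$ to $\{w : \inv(w^{-1}) = k\}$. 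Your parenthetical justification --- that both counts equal $\sum_\lambda a^{(k)}_\lambda\,|\{Q \in \SYT(\lambda) : \Des(Q) = S\}|$ --- becomes correct once $\Des(w)$ is replaced by $\Des(w^{-1})$ throughout that sentence, so the gap is purely notational and the proof goes through.
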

\begin{proof}
The claim for (i) is a restatement of \cite[Theorem~1]{Roi00}. 
The claim for (ii) follows from the one for (i) by the fact~\cite[Theorem~1]{FS78} that the number of $w \in \mathfrak{S}_n$ with $\Des(w^{-1}) = S$ and $\inv(w) = k$ is 
equal to that of $w \in \mathfrak{S}_n$ with $\Des(w^{-1}) = S$ 
and $\maj(w) = k$ for all $S \subseteq [n-1]$ and $k$.
\end{proof}

Consider now the group $B_n$. It acts on the polynomial ring $P_n$
by permuting the variables $x_1, \ldots,x_n$ and flipping their
signs. Let $I^B_n$ be the ideal of $P_n$ generated by the
$B_n$-invariant polynomials (i.e., symmetric functions in the
squares $x_1^2, \ldots, x_n^2$) with zero constant term. The
\emph{coinvariant algebra} of $B_n$ is the quotient ring
$P_n/I^B_n$. 
$B_n$ acts on each of its
homogeneous components, and the resulting
representation is isomorphic to the regular representation of $B_n$. 
It is graded by degree; denote by $\chi^B_{n, k}$ the character of the $B_n$-action on the $k$-th homogeneous component of $P_n/I^B_n$, for $0 \le k \le n^2$.

The \emph{flag-major index} \cite{AR01} of a signed permutation 
$w \in B_n$ is defined as
\[
\fmaj(w) \ = \ 2 \cdot \maj(w) + \barr(w),
\]
where $\maj(w)$ is the sum of the elements of $\Des(w)$ (as
defined in Section~\ref{subsec:perm}) and $\barr(w)$ is the number
of indices $i \in [n]$ for which $w(i)$ is barred. The
\emph{flag-major index} of a standard Young bitableau $Q$ of shape
$(\lambda, \mu)$ is defined as twice the sum of the elements of $\Des(Q)$ 
plus the size of $\mu$, so that $\fmaj(w) = \fmaj
(Q^B(w))$ for every $w \in B_n$, in the notation of Section~\ref{sec:Knuth}, by Proposition~\ref{BRS_properties} (b). The \emph{flag-inversion number}
of $w \in B_n$ is defined as
\[
\finv(w) \ = \ 2 \cdot \inv(w) + \barr(w),
\]
where $\inv(w)$ is the number of inversions of the sequence
$(w(1), w(2),\ldots,w(n))$ with respect to the total order
(\ref{eq:total}) (this is a variant of the notion of 
flag-inversion number introduced in~\cite{FH07}; see 
also~\cite{ABR11, Fire04} and references therein).

The equidistribution result~\cite[Theorem~1]{FS78}, mentioned 
in the proof of Theorem~\ref{thm:coinvA}, follows from a theorem
of Foata~\cite{Foa68}, which implies that $\inv$ and $\maj$ are
equidistributed over the set of permutations of an 
ordered multiset 
(see, for instance, the discussion 
in~\cite[Section~1]{FH07}), by an application of the inclusion-exclusion principle. A similar 
argument yields the following analogous result for $B_n$.
\begin{proposition}\label{lem:finv_fmaj}
For every $n$, $k$ and $\sigma \in \Sigma^B(n)$,
the number of $w\in B_n$ with $\cDes(w^{-1}) = \sigma$ and $\finv(w)=k$ is equal to 
the number of $w \in B_n$ with $\cDes(w^{-1}) = \sigma$ and $\fmaj(w) = k$.
\end{proposition}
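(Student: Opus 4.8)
The plan is to follow the same inclusion--exclusion strategy that reduces the type $A$ statement (\cite[Theorem~1]{FS78}) to Foata's bijection on multiset permutations, now working one ``sign stratum'' at a time. First I would record the following refinement of the type $A$ fact, which already appears in the discussion preceding this proposition: for every ordered multiset $M$ (a word with possibly repeated letters) the statistics $\inv$ and $\maj$ are equidistributed over the rearrangements of $M$ whose descent set is a prescribed set $S$. Equivalently, writing $\inv_S$ and $\maj_S$ for the restrictions to rearrangements with descent set contained in $S$, the distributions of $\inv_{S}$ and $\maj_{S}$ coincide for every $S$; the ``exactly $S$'' version then follows by inclusion--exclusion over $S$. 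This is exactly the ingredient used in the proof of Theorem~\ref{thm:coinvA}\,(ii).

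Next I would set up the type $B$ bookkeeping. Fix $\sigma=(S,\veps)\in\Sigma^B(n)$ and consider $w\in B_n$ with $\cDes(w^{-1})=\sigma$. Decompose the data of $w$ into: (a) the sign vector $\teps\in\{-,+\}^n$ of $\cDes(w^{-1})$, equivalently the set $\Neg$ of positions $i$ with $w^{-1}(i)$ barred (this records $\barr(w)=\barr(w^{-1})=|\Neg|$, which is the \emph{same} for all such $w$, and is determined by $\sigma$); and (b) the underlying permutation $|w|\in\mathfrak{S}_n$, together with the requirement that $\Des(w)=\wDes(\sigma)$ coming from Definition~\ref{cDes} and the remark that $\wDes(\cDes(w))=\Des(w)$. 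Since $\barr(w)$ is constant on the fiber $\{w:\cDes(w^{-1})=\sigma\}$, we have $\fmaj(w)=2\maj(w)+\barr(w)$ and $\finv(w)=2\inv(w)+\barr(w)$, so it suffices to prove that $\maj(w)$ and $\inv(w)$ are equidistributed over this fiber. The key point is that the bar pattern of $w^{-1}$ is governed by $\sigma$ but, once that pattern is fixed, the inversions and descents of $w$ (computed with respect to the order $<_r$ of \eqref{eq:total}) depend only on $|w|$ and on which values are barred --- and ``which values are barred'' is itself determined by $\cDes(w^{-1})$. So the fiber is in bijection with a set of ordinary permutations $|w|\in\mathfrak{S}_n$ subject to a fixed descent condition relative to $<_r$, and on that set $\inv$ and $\maj$ agree with the usual $\mathfrak{S}_n$-statistics taken on a suitable relabeled linear order.

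The hard part will be disentangling the interaction between the two occurrences of the sign data: the signs attached to the \emph{positions} of $w^{-1}$ versus the signs attached to the \emph{values} of $w$, and checking that fixing $\cDes(w^{-1})$ really does pin down the set of barred values of $w$ (not merely their number). Once that is settled, the claim reduces, stratum by stratum over the allowed bar patterns, to the type $A$ refined equidistribution of $\inv$ and $\maj$ over rearrangements of a two-letter-colored word with a prescribed descent set, which is the Foata-plus-inclusion--exclusion input quoted above. I would therefore structure the write-up as: (1) state the refined type $A$ lemma with a one-line justification via \cite[Theorem~1]{FS78} and inclusion--exclusion; (2) show $\barr(w)$, and more precisely the barred-value set of $w$, is constant on each fiber $\{\cDes(w^{-1})=\sigma\}$; (3) identify the fiber with a set of permutations carrying a fixed $<_r$-descent set and transport $\inv,\maj$ accordingly; (4) apply the lemma and multiply through by the common factor coming from $\barr(w)$ to conclude the equidistribution of $\finv$ and $\fmaj$.
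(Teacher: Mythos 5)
Your overall architecture --- observe that $\barr(w)$ is constant on the fiber $\{w \in B_n : \cDes(w^{-1}) = \sigma\}$, reduce to the equidistribution of $\inv$ and $\maj$ there, and feed this into Foata plus inclusion--exclusion --- is exactly the paper's. But two linked errors make the proposal fail as written. First, your ``refined type $A$ lemma'' is false: $\inv$ and $\maj$ are \emph{not} equidistributed over the rearrangements of a multiset with a prescribed descent set, nor over those whose descent set is contained in a prescribed set. For $M = \{1,1,2\}$ and $S = \{1\}$ the rearrangements with descent set contained in $S$ are $112$ and $211$, whose $\inv$-values are $0$ and $2$ while their $\maj$-values are $0$ and $1$. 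The true statement behind Theorem~\ref{thm:coinvA}(ii) is \cite[Theorem~1]{FS78}, which prescribes the descent set of $w^{-1}$ while computing $\inv$ and $\maj$ on $w$; Foata's theorem itself must be applied to \emph{all} rearrangements of a multiset, with no descent restriction --- the multiset is what encodes the inverse-descent condition. Second, and symptomatic of the same confusion, your identification of the fiber is off by an inverse: $\cDes(w^{-1}) = \sigma$ pins down the barred values of $w$ together with $\Des(w^{-1}) = \wDes(\sigma)$, not $\Des(w) = \wDes(\sigma)$. If $\Des(w)$ were fixed on the fiber then $\maj(w)$ would be constant and the claim could not hold; e.g.\ for $\sigma$ with signed composition $(\bar 1, 1)$ the fiber is $\{(\bar 1,2),\, (2, \bar 1)\}$, with descent sets $\emptyset$ and $\{1\}$.

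Once these are corrected the plan can be completed essentially as in the paper: from the signed composition of $\sigma$ build the ordered multiset $\mM_\sigma$ (contiguous blocks of $[n]$ collapsed to copies of their minima, barred according to $\veps$, ordered by $<_r$); rearrangements of $\mM_\sigma$ correspond, preserving $\inv$ and $\maj$, to the signed permutations $w$ with $\cDes(w^{-1}) = \tau$ for $\tau$ ranging over the coarsenings $\tau \preceq \sigma$ obtained by merging adjacent equal-sign parts. Foata's theorem then gives $\sum_{\tau \preceq \sigma} f_{n,k}(\tau) = \sum_{\tau \preceq \sigma} g_{n,k}(\tau)$, and M\"obius inversion over $\preceq$ finishes the proof. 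Note that this inclusion--exclusion runs over signed compositions with a fixed underlying sign vector, not merely ``stratum by stratum over bar patterns'': within one bar pattern there are many possible values of $\wDes(\sigma)$ that still have to be separated.
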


\begin{proof}
Let us denote by $f_{n,k}(\sigma)$ (respectively, $g_{n,k}(\sigma)$)
the number of $w \in B_n$ with $\cDes(w^{-1}) = \sigma$ and $\inv
(w) = k$ (respectively, $\maj(w) = k$). Since the function 
$\barr(w)$ is constant within each class $\{ w \in B_n: \, 
\cDes(w^{-1}) = \sigma \}$, we have to show that $f_{n,k}(\sigma) 
= g_{n,k} (\sigma)$ for all $n$, $k$ and $\sigma \in \Sigma^B(n)$. 

Fix $n$, $k$ and $\sigma \in \Sigma^B(n)$.
Let $\gamma \in \Comp^B(n)$, with (absolute) parts $\gamma_1, \ldots, \gamma_t$, be the signed composition corresponding to $\sigma$.
Consider the multiset $\mM_\sigma$ obtained by breaking the set $[n]$ into contiguous segments of lengths $\gamma_1, \ldots, \gamma_t$,  
replacing all the entries of a segment by copies of its smallest entry,
and barring them if the corresponding part of $\gamma$ is barred. 
For example, if $\gamma = (2,2,\bar{2}, \bar{2})$ then $\mM_\sigma = \{1,1,3,3,\bar{5},\bar{5},\bar{7},\bar{7}\}$. 
We consider the ground set of the multiset $\mM_\sigma$ totally ordered by (\ref{eq:total}). For $\tau \in \Sigma^B(n)$ we write $\tau \preceq \sigma$ if 
the signed composition corresponding to $\tau$ can be obtained from the one corresponding to $\sigma$ by replacing consecutive unbarred parts, or consecutive barred parts, by their sum, while keeping the bar when present. 
For example, for $\gamma = (2,2,\bar{2},\bar{2})$,
the signed sets $\tau \preceq \sigma$ are those with signed 
compositions $\gamma$, $(4,\bar{2},\bar{2})$, $(2,2,\bar{4})$ 
and $(4,\bar{4})$. A little thought shows that the aforementioned equidistribution result of Foata~\cite{Foa68}, applied to the 
ordered multiset $\mM_\sigma$, implies that
\[ \sum_{\tau \preceq \sigma} f_{n,k} (\tau) \ = \ 
   \sum_{\tau \preceq \sigma} g_{n,k} (\tau) \]
for every $\sigma \in \Sigma^B(n)$. Using the principle of 
inclusion-exclusion, we conclude that $f_{n,k}(\sigma) = g_{n,k}
(\sigma)$ for every $\sigma \in \Sigma^B(n)$ and the proof 
follows.
\end{proof}

The following statement is our $B_n$-analogue of
Theorem~\ref{thm:coinvA}.

\begin{theorem} \label{thm:coinvB}
For $0 \le k \le n^2$, each of the following subsets of $B_n$,
endowed with the standard signed descent map, is a fine set for
the $B_n$-character $\chi^B_{n, k}$:
\begin{enumerate}[(i)]
\item $\{w \in B_n: \finv(w^{-1}) = k\}$; \item $\{w
\in B_n: \fmaj(w^{-1}) = k\}$.
\end{enumerate}
\end{theorem}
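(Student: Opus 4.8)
The plan is to reduce everything, via Theorem~\ref{thm:main}, to a single identity in $\Lambda(x)\otimes\Lambda(y)$, and then to prove it by combining the type $B$ Robinson--Schensted correspondence with the known graded Frobenius characteristic of the coinvariant algebra $P_n/I^B_n$. By Theorem~\ref{thm:main}, assertion~(ii) is equivalent to
\[
\ch(\chi^B_{n,k}) \ = \ \sum_{\substack{w\in B_n\\ \fmaj(w^{-1})=k}} F_{\cDes(w)}(x,y),
\]
and assertion~(i) to the same identity with $\finv$ in place of $\fmaj$. I would first observe that these two identities are equivalent: substituting $w\mapsto w^{-1}$, the multiplicity of a fixed $\sigma\in\Sigma^B(n)$ in the multiset $\{\cDes(w):w\in B_n,\ \finv(w^{-1})=k\}$ equals $\#\{w\in B_n:\cDes(w^{-1})=\sigma,\ \finv(w)=k\}$, which by Proposition~\ref{lem:finv_fmaj} is unchanged if $\finv$ is replaced by $\fmaj$; hence the two right-hand sides coincide, and (i) $\Leftrightarrow$ (ii). It thus remains to prove the displayed identity for $\fmaj$.

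For the left-hand side I would invoke the known graded $B_n$-module structure of $P_n/I^B_n$ (the type $B$ fake-degree formula): the multiplicity of the irreducible $\chi^{\lambda,\mu}$ in the $k$-th homogeneous component equals $\#\{Q\in\SYT(\lambda,\mu):\fmaj(Q)=k\}$, so that
\[
\ch(\chi^B_{n,k}) \ = \ \sum_{(\lambda,\mu)\vdash n}\ \#\{Q\in\SYT(\lambda,\mu):\fmaj(Q)=k\}\ s_\lambda(x)\,s_\mu(y).
\]
For the right-hand side I would use the type $B$ Robinson--Schensted correspondence $w\mapsto(P^B(w),Q^B(w))$ of Section~\ref{sec:Knuth}, a bijection from $B_n$ onto pairs of standard Young bitableaux of equal shape; by Proposition~\ref{BRS_properties} it satisfies $\cDes(w)=\cDes(Q^B(w))$ and $Q^B(w^{-1})=P^B(w)$, whence $\fmaj(w^{-1})=\fmaj(Q^B(w^{-1}))=\fmaj(P^B(w))$ by the definition of the flag-major index of a bitableau. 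Consequently
\begin{multline*}
\sum_{\substack{w\in B_n\\ \fmaj(w^{-1})=k}} F_{\cDes(w)}(x,y)
\ = \ \sum_{(\lambda,\mu)\vdash n}\ \#\{P\in\SYT(\lambda,\mu):\fmaj(P)=k\}\ \sum_{Q\in\SYT(\lambda,\mu)} F_{\cDes(Q)}(x,y) \\
= \ \sum_{(\lambda,\mu)\vdash n}\ \#\{P\in\SYT(\lambda,\mu):\fmaj(P)=k\}\ s_\lambda(x)\,s_\mu(y),
\end{multline*}
the last step by Proposition~\ref{prop:schurprod}; this matches the right-hand side of the fake-degree formula. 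The identity for $\fmaj$ is thus established, and (i) follows as above.

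The step that I expect to carry the real content is the input on the left-hand side, namely that the grading of $P_n/I^B_n$ is governed by $\fmaj$ on standard Young bitableaux. If it is not available to quote in exactly this form, it has to be derived, for instance from the decomposition of $P_n/I^B_n$, as a graded $B_n$-module, as the $\mathfrak{S}_n$-coinvariant algebra in the squares $x_1^2,\dots,x_n^2$ (with degrees doubled and the sign flips acting trivially) tensored with the algebra $F[x_1,\dots,x_n]/(x_1^2,\dots,x_n^2)$ carrying the sign flips, combined with the type $A$ fake-degree formula and the relation $\fmaj=2\maj+\barr$; keeping careful track of the grading is the delicate point there. Everything else --- the passage between (i) and (ii) and the computation of the right-hand side --- is a formal consequence of Theorem~\ref{thm:main}, Proposition~\ref{lem:finv_fmaj}, Proposition~\ref{BRS_properties} and Proposition~\ref{prop:schurprod}.
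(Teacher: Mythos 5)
Your proposal is correct and follows essentially the same route as the paper's proof: it rests on Stembridge's fake-degree formula for the graded multiplicities of $\chi^{\lambda,\mu}$ in $P_n/I^B_n$, the type $B$ Robinson--Schensted correspondence (Proposition~\ref{BRS_properties}) combined with Proposition~\ref{prop:schurprod} to evaluate the quasisymmetric generating function, and Proposition~\ref{lem:finv_fmaj} to pass between $\finv$ and $\fmaj$. The fake-degree input that you flag as the step carrying the real content is indeed available to quote directly as \cite[Theorem~5.3]{Ste89}, which is exactly what the paper does, so no further derivation is needed.
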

\begin{proof}
For every bipartition $(\lambda, \mu) \vdash n$, the multiplicity
of the irreducible character $\chi^{\lambda, \mu}$ in $\chi^B_{n,
k}$ was shown by Stembridge (see~\cite[Theorem~5.3]{Ste89}) to be
equal to the number of standard Young bitableaux of shape
$(\lambda, \mu)$ and flag-major index $k$. This fact,
Corollary~\ref{cor:irrB} (i) and basic properties of the
Robinson--Schensted correspondence for $B_n$ (see
Proposition~\ref{BRS_properties}) imply the claim for (ii). The
claim for (i) follows from that for (ii) by Proposition~\ref{lem:finv_fmaj}.
\end{proof}

\begin{example}
For $n=k=3$ we have
\begin{multline*}
  \{ w \in B_3: \, \finv(w) = 3\} \\ = \{ (\bar{1},\bar{2},\bar{3}), 
  (1,\bar{2},3), (1,\bar{3},2), (2,\bar{1},3), (\bar{1},3,2), 
  (\bar{2},3,1), (\bar{3},2,1) \}, \end{multline*}
\begin{multline*}
\{w \in B_3: \, \fmaj(w) = 3\} \\ = \{ (\bar{1},\bar{2},\bar{3}), 
  (1,\bar{2},3), (1,\bar{3},2), (2,\bar{1},3), (3,\bar{1},2),
  (3,\bar{2},1), (2,\bar{3},1) \}, \end{multline*}
\begin{multline*}  \{w \in B_3: \, \finv(w^{-1}) = 3\} \\ = \{ (\bar{1},\bar{2},\bar{3}), 
  (1,\bar{2},3), (1,3,\bar{2}), (\bar{2},1,3), (\bar{1},3,2), 
  (3,\bar{1},2), (3,2,\bar{1}) \}, \end{multline*}
\begin{multline*}  
  \{w \in B_3: \fmaj(w^{-1}) = 3\} \\ = \{ (\bar{1},\bar{2},\bar{3}), 
  (1,\bar{2},3), (1,3,\bar{2}), (\bar{2},1,3), (\bar{2},3,1),
  (3,\bar{2},1),  (3,1,\bar{2}) \}. 
\end{multline*}
 
The signed descent compositions of the elements of either of the last 
 two sets are $(\bar 3)$, $(1, \bar 1, 1)$, $(2, \bar 1)$,
 $(\bar 1, 2)$, $(\bar 1, 1,1)$, $(1, \bar 1, 1)$ and $(1,1, \bar 1)$
and hence
\begin{eqnarray*}
  \sum_{\substack{w \in B_3 \\ \finv(w^{-1}) = 3}} F_{\cDes(w)} (x, y) &=&  
  \sum_{\substack{w \in B_3 \\ \fmaj(w^{-1}) = 3}} F_{\cDes(w)} (x, y) \\
  &=&  F_{(\bar 3)}(x,y)\ + \\
  & &  F_{(1, \bar 1, 1)}(x,y) \, + \, F_{(2, \bar 1)}(x,y)   \, + \, 
  F_{(\bar 1, 2)}(x,y) \ + \\
  & & F_{(\bar 1, 1,1)}(x,y) \, + \, F_{(1,\bar{1},1)} (x,y)\, + \, 
  F_{(1,1,\bar{1})} (x,y) \\
  &=& s_{(3)}(y) \, + \, s_{(2)}(x)s_{(1)}(y) \, + \, s_{(1,1)}(x)s_{(1)}(y).
 \end{eqnarray*}
The last equality follows by direct computation or use of 
Proposition~\ref{prop:schurprod}.
Indeed, there are exactly three bitableaux of size $3$ and
flag-major index $3$, namely
\[
( \, \varnothing \ , \ \young(123) \ ), \ \ \ \ ( \ \young(1,3) \ ,
\ \young(2) \ ), \quad ( \ \young(13) \ , \ \young(2) \ ).
\]
Thus, by~\cite[Theorem~5.3]{Ste89},
\begin{eqnarray*}
\ch\left(\chi^B_{3,3}\right) &=& \ch\left(\chi^{(\varnothing, (3))} + \chi^{((2), (1))} + \chi^{((1,1),
(1))}\right) \\ &=& s_{(3)}(y)+s_{(1)}(y)s_{(2)}(x)+s_{(1)}(y)s_{(1,1)}(x).\end{eqnarray*}
\end{example}

\begin{remark} \label{rem:coinv} \rm
Another natural $B_n$-analogue of the inversion number statistic
$\inv: \mathfrak{S}_n \to \NN$ is the length function $\ell_B: B_n
\to \NN$, in terms of the Coxeter generators of $B_n$; see
\cite[Section~8.1]{BB05} for an explicit combinatorial description
of this function. A formula of Adin, Postnikov and
Roichman~\cite[Theorem~4]{APR01} implies that, for every $k$, the
subset $\{w \in B_n: \ell_B(w) = k\}$ of $B_n$ is a
$\mathfrak{S}_n$-fine set for the restriction of $\chi^B_{n,k}$ to
$\mathfrak{S}_n$. However, this set is not fine for any
$B_n$-character. Indeed, let us write $\Neg(\sigma)$ for the set
of negative coordinates of the sign vector of $\sigma \in
\Sigma^B(n)$. Given a fine set $\bB$ for some $B_n$-character,
condition (iii) of Corollary~\ref{cor:main} implies that for any
$J\subseteq [n]$, the number of elements $b \in \bB$ with
$\Neg(\cDes(b)) = J$ depends only on the cardinality $J$ and is
divisible by $\binom{n}{|J|}$. We leave it to the interested
reader to verify that this restriction is violated in the case
considered here.
\end{remark}

\section{Conjugacy classes}
\label{sec:conj}

This section reviews a result of Poirier~\cite{Poi98}, which
implies that every conjugacy class in $B_n$ is a fine set, and
discusses in detail two interesting examples of fine sets which
are unions of conjugacy classes, namely the set of derangements in
$B_n$ and the set of $k$-roots of the identity element.

Consider the alphabet $\aA = \{x_1, x_2,\dots,y_1, y_2,\dots\}$. A
\emph{necklace} of length $m$ over $\aA$ is an equivalence class
of words of length $m$ over $\aA$, where two such words are
equivalent if one is a cyclic shift of the other. A necklace is
called \emph{primitive} if the corresponding word (the choice of
representative being irrelevant) is not equal to a power of a word
of smaller length. Given a finite multiset $\mM$ of primitive
necklaces over $\aA$, the product of all variables appearing in
the necklaces of $\mM$ is called the \emph{evaluation} of $\mM$.
The \emph{signed cycle type} of $\mM$ is the bipartition $(\alpha,
\beta)$ for which the parts of $\alpha$ (respectively, $\beta$)
are the lengths of the necklaces of $\mM$ which contain an even
(respectively, odd) number of $y$ variables. We will denote by
$L^B_{\alpha,\beta} (x, y)$ the formal sum of the evaluations of
all multisets of primitive necklaces of signed cycle type
$(\alpha, \beta)$ over $\aA$.

The following result extends to $B_n$ analogous results for
$\mathfrak{S}_n$ (see, for instance, the discussion in
\cite[Section~2]{GR93}). Combined with Theorem~\ref{thm:main}, it
shows that each conjugacy class $C_{\alpha, \beta}$, endowed with
the standard colored descent map, is a fine set for $B_n$.

\begin{theorem} {\rm (\cite[Theorem~16]{Poi98})}
\label{thm:poirier} For every bipartition $(\alpha,\beta) \vdash
n$
 \begin{equation} \label{eq:poirier}
    \sum_{w \in C_{\alpha, \beta}} F_{\cDes(w)} (x, y) \ = \
    L^B_{\alpha, \beta} (x, y).
  \end{equation}
Moreover, $L^B_{\alpha, \beta} (x, y)$ is equal to the Frobenius
characteristic of a representation of $B_n$.
\end{theorem}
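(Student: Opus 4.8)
The plan is to derive Theorem~\ref{thm:poirier} from the corresponding statement for the symmetric group by exploiting the ``$xy$-substitution'' structure already used in the proof of Theorem~\ref{thm:roiB}. Recall from~\cite[Section~2]{GR93} that for the symmetric group one has $\sum_{w \in C_\alpha} F_{n, \Des(w)}(x) = L_\alpha(x)$, where $L_\alpha(x)$ is the generating function for multisets of primitive necklaces of total length $n$ whose lengths form the partition $\alpha$, and that $L_\alpha(x)$ is the Frobenius characteristic of a genuine $\mathfrak{S}_n$-representation (the Lie-type representation obtained by inducing a faithful linear character of the centralizer). The first step is to recall the combinatorial structure of a conjugacy class $C_{\alpha,\beta}$ in $B_n$: an element $w$ is determined by an unsigned cycle structure together with a sign on each cycle, where a ``positive'' cycle of length $k$ contributes a pair of $\mathfrak{S}$-type $k$-cycles and a ``negative'' cycle of length $k$ contributes a single $2k$-cycle carrying an odd number of bars.

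The key step is to decompose the left-hand side of~\eqref{eq:poirier} according to which coordinates of $w$ are barred. Writing $w = (|w|, \veps_w)$ with $|w| \in \mathfrak{S}_n$ and $\veps_w \in \{-,+\}^n$, the monomials appearing in $F_{\cDes(w)}(x,y)$ use $x$-variables in positions where $\teps = +$ and $y$-variables where $\teps = -$, and the signed descent set records exactly where $|w|$ descends \emph{and} where the sign vector changes. One checks that, after grouping, each positive cycle of $w$ contributes a necklace over the combined alphabet $\aA$ with an even number of $y$'s (coming from the even number of barred entries traversed by $c\bar c$), while each negative cycle contributes a necklace with an odd number of $y$'s; primitivity of the necklaces corresponds exactly to the cycles being genuine cycles. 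Thus the right-hand side $L^B_{\alpha,\beta}(x,y)$ is precisely what one obtains by expanding the left-hand side. The cleanest way to make this rigorous is probably to follow the Gessel--Reutenauer argument verbatim on the alphabet $\aA$, keeping track of the parity of $y$-variables in each necklace, and to verify that the signed descent set of $w$ matches the descent structure of the corresponding multiset of necklaces under the standard bijection (which reads the necklaces in the Lyndon-word ordering).

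For the ``moreover'' clause, that $L^B_{\alpha,\beta}(x,y)$ is the Frobenius characteristic of an actual (non-virtual) $B_n$-representation: the cleanest route is representation-theoretic. The centralizer $Z$ in $B_n$ of an element of signed cycle type $(\alpha,\beta)$ is a direct product of wreath products $(\ZZ_m \wr \mathfrak{S}_{a_m})$ over positive cycle lengths and $(\ZZ_{2m} \wr \mathfrak{S}_{b_m})$ over negative cycle lengths. One exhibits a one-dimensional character of $Z$ (sending a generating rotation of each cycle to a primitive root of unity of the appropriate order, and trivial on the symmetric-group factors) whose induction to $B_n$ has Frobenius characteristic $L^B_{\alpha,\beta}(x,y)$; the necklace generating function is exactly the ``cycle-index-style'' evaluation of this induced character, by the same plethystic computation that yields $L_\alpha(x)$ in type $A$. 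Alternatively, and perhaps more in the spirit of this paper, one can invoke Theorem~\ref{thm:main} together with Corollary~\ref{cor:main}: once~\eqref{eq:poirier} is established, it suffices to show that $F_{C_{\alpha,\beta}}(x,y) = \sum_{w \in C_{\alpha,\beta}} F_{\cDes(w)}(x,y)$ is Schur-positive, which again reduces to the induced-character description.

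I expect the main obstacle to be the bookkeeping in the central step: verifying that the signed descent set $\cDes(w)$ of Definition~\ref{cDes} matches, monomial by monomial, the descent data carried by the associated multiset of primitive necklaces over $\aA$ under the Gessel--Reutenauer bijection. The subtlety is that $\cDes$ records sign changes $\veps_w(i) \neq \veps_w(i+1)$ in addition to genuine descents, and one must confirm that these extra ``color descents'' are exactly the places where the standard-word reading of the necklace multiset switches between an $x$-letter and a $y$-letter, so that the constraint $j \in \wDes(\cDes(w)) \Rightarrow i_j < i_{j+1}$ in~\eqref{eq:defF(xy)} lines up with the strict-inequality pattern dictated by the necklaces. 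Since this is precisely the type-$A$ argument applied on the enlarged alphabet, with the $y$-parity of each necklace pinned down by the positive/negative cycle dichotomy, I expect it to go through, but it is the step that requires genuine care rather than formal manipulation.
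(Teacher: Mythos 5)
The paper offers no proof of this statement: it is quoted directly from Poirier \cite[Theorem~16]{Poi98}, and both the identity \eqref{eq:poirier} and the construction of the underlying representation are explicitly deferred to \cite[Section~4]{Poi98}. So the comparison here is really against Poirier's argument, and your outline is indeed that argument in spirit: run Gessel--Reutenauer on the doubled alphabet $\aA$, with the parity of the $y$-letters on each primitive necklace recording whether the corresponding cycle is positive or negative. That is the right plan. The problem is that, as a proof, it stops exactly where the content begins. The verification you defer with ``I expect it to go through'' --- that the sorting of necklace positions by their infinite periodic words produces a signed permutation whose $\cDes$, \emph{including} the pure color-change positions $\veps_w(i)\ne\veps_w(i+1)$ that enter $S$ but not $\wDes$, matches monomial-by-monomial the weak/strict inequality pattern of \eqref{eq:defF(xy)} --- is the entire theorem. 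It is not a formal consequence of the type $A$ statement, because the order on $\aA$ must be chosen compatibly with $<_r$ and one must check that a color change with no $>_r$-descent imposes no inequality on the indices while still forcing the switch between $x$- and $y$-letters. Until that is written out, you have a correct plan, not a proof.

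Two secondary points. First, your centralizer description for the ``moreover'' clause is off: the centralizer in $B_m$ of a positive $m$-cycle has order $2m$ (it is $\ZZ_m\times\ZZ_2$, the rotation together with the global negation of the cycle's support; for $m=2$ it is the Klein four-group), so the positive-cycle factors are $(\ZZ_m\times\ZZ_2)\wr\mathfrak{S}_{a_m}$, not $\ZZ_m\wr\mathfrak{S}_{a_m}$. This is not cosmetic: the linear character must be specified on the negation elements --- trivially for positive cycles and by $-1$ on the negation inside each negative cycle's cyclic factor $\ZZ_{2m}$ --- since that choice is precisely what selects the even versus odd $y$-parity of the corresponding necklaces. (A sanity check in $B_2$: inducing the character of the Klein four-group that is $-1$ on $(2,1)$ and trivial on $(\bar1,\bar2)$ gives $\chi^{(1,1),\varnothing}+\chi^{\varnothing,(1,1)}$, with characteristic $e_2(x)+e_2(y)=L^B_{(2),\varnothing}(x,y)$, while flipping the value on the negation gives $\chi^{(1),(1)}$, with characteristic $s_1(x)s_1(y)=L^B_{\varnothing,(2)}(x,y)$.) Second, your proposed ``alternative'' route to the moreover-clause via Corollary~\ref{cor:main} is circular: Schur-positivity of $\sum_{w\in C_{\alpha,\beta}}F_{\cDes(w)}(x,y)$ is exactly what the clause asserts, so you cannot avoid carrying out the induced-character (or free Lie algebra) computation.
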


The representation of $B_n$ which appears in the theorem is a
$B_n$-analogue of the Lie representation of given type of
$\mathfrak{S}_n$. We omit the definition, since this
representation does not play any major role in this paper, and
refer to \cite[Section~4]{Poi98} and references therein for more
information.

\subsection{Derangements}
\label{subsec:derange}

The set of derangements in $B_n$, being a union of conjugacy
classes, is naturally a fine set. This section describes the
corresponding representation of $B_n$ and its decomposition as a
direct sum of irreducible representations, and expresses its
Frobenius characteristic in terms of another set of elements of
$B_n$, which may be called desarrangements of type $B$. These
results are essentially $B_n$-analogues of results of
D\'esarm\'enien and Wachs~\cite{DeWa88, DeWa93} and of Reiner and
Webb~\cite[Section~2]{ReiW04} for the symmetric group, which will
first be reviewed; they provided much of the motivation behind the
present paper. A $B_n$-analogue of the equidistribution
\cite{DeWa88, DeWa93} of the descent set among derangements and
desarrangements in $\mathfrak{S}_n$ will be deduced.

The set of derangements (elements without fixed points) in
$\mathfrak{S}_n$, denoted here by $\dD_n$, is a fine set for
$\mathfrak{S}_n$ by Theorem~\ref{thm:mainA} and
\cite[Theorem~3.6]{GR93}. The corresponding
$\mathfrak{S}_n$-representation, first studied in
\cite[Section~2]{ReiW04}, can be described as follows. We denote
by $\iI_n$ the set of words from the alphabet $[n]$ with no
repeated letters (known as injective words), partially ordered by
setting $u \le v$ for $u, v \in \iI_n$ if $u$ is a subword of $v$.
Thus, the empty word is the minimum element and the $n!$
permutations in $\mathfrak{S}_n$ are the maximal elements of
$\iI_n$. The poset $\iI_n$ is the face poset of a regular cell
complex $\kK_n$ whose faces are combinatorially isomorphic to
simplices. The symmetric group $\mathfrak{S}_n$ acts naturally on
$\iI_n$ and hence on the augmented cellular chain complex of
$\kK_n$ over $\CC$; see \cite{ReiW04} and references therein for
more explanation and for background on regular cell complexes.
Following \cite{ReiW04}, we denote by $\chi_n$ the character of
the resulting representation on the top reduced homology
$\tilde{H}_{n-1} (\kK_n, \CC)$.

Using the Hopf trace formula and the shellability of $\kK_n$, it
was shown in \cite[Proposition~2.1]{ReiW04} that
  \begin{equation} \label{eq:RWrec}
    \chi_n \ = \ \sum_{k=0}^n \, (-1)^{n-k} \ 1 \!
                \uparrow^{\mathfrak{S}_n}_{(\mathfrak{S}_1)^k \times
    \mathfrak{S}_{n-k}}
  \end{equation}
for every $n$ (the reader who is unfamiliar with cellular homology
may wish to consider this formula as the definition of $\chi_n$).

We denote by $\eE_n$ the set of permutations (called
desarrangements in \cite{DeWa93}) $w \in \mathfrak{S}_n$ for which
the smallest element of $[n] \sm \Des(w)$ is even. The following
theorem, which combines results of \cite{DeWa88, DeWa93} and
\cite[Section~2]{ReiW04}, shows that $\dD_n$ and $\{w^{-1}: w \in
\eE_n\}$ are fine sets for the sign twist $\varepsilon_n \otimes
\chi_n$ of $\chi_n$ and determines its decomposition as a direct
sum of irreducible characters.

\begin{theorem} {\rm (\cite{DeWa88, DeWa93} \cite[Section~2]{ReiW04})} \label{thm:DRWW}
For every positive integer $n$,

\begin{eqnarray*}
  \ch(\varepsilon_n \otimes \chi_n) &=& \sum_{w \in \dD_n}
  F_{n, \Des(w)} (x) \ = \ \sum_{w \in \eE_n} F_{n, \Des(w^{-1})} (x)
  \\ &=& \sum_{\lambda \vdash n} c_\lambda s_\lambda(x)
\end{eqnarray*}
where $c_\lambda$ is the number of standard Young
tableaux $P$ of shape $\lambda$ for which the smallest element of
$[n] \sm \Des(P)$ is even. In particular, the number of
derangements $w \in \dD_n$ with $\Des(w) = S$ is equal to the
number of permutations $w \in \eE_n$ with $\Des(w^{-1}) = S$ for
every $S \subseteq [n-1]$.
\end{theorem}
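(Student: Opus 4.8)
The plan is to deduce all the claims from the Reiner--Webb recursion~\eqref{eq:RWrec}, a Pieri--rule computation, and the Désarménien--Wachs equidistribution. First I would apply the Frobenius characteristic to~\eqref{eq:RWrec}. Since $\ch\bigl(1 \!\uparrow^{\mathfrak{S}_n}_{(\mathfrak{S}_1)^k \times \mathfrak{S}_{n-k}}\bigr) = h_1^k h_{n-k}$ and $\ch(\varepsilon_n \otimes \psi) = \omega(\ch\psi)$ for every $\mathfrak{S}_n$-character $\psi$, with $\omega(h_1) = e_1 = h_1$ and $\omega(h_{n-k}) = e_{n-k}$, this yields
\[
  \ch(\varepsilon_n \otimes \chi_n) \ = \ \sum_{k=0}^n (-1)^{n-k} h_1^k e_{n-k}.
\]

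Next I would extract the Schur expansion of the right-hand side. Writing $e_{n-k} = s_{(1^{n-k})}$ and applying the Pieri rule $h_1 \cdot s_\mu = \sum_{\mu^+} s_{\mu^+}$ (sum over $\mu^+$ obtained from $\mu$ by adding a box) $k$ times, the coefficient $\langle h_1^k e_{n-k}, s_\lambda \rangle$ equals the number of $Q \in \SYT(\lambda)$ whose entries $1, 2, \dots, n-k$ occupy the top $n-k$ cells of the first column; a short induction on the entries shows this happens precisely when $\{1, \dots, n-k-1\} \subseteq \Des(Q)$, equivalently when $\min([n] \sm \Des(Q)) \ge n-k$. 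Summing over $k$ with the signs $(-1)^{n-k}$, the contributions of $k = n$ and $k = n-1$ cancel, and since $\sum_{m=2}^N (-1)^m$ equals $1$ when $N$ is even and $0$ otherwise, the alternating sum collapses to the number of $Q \in \SYT(\lambda)$ with $\min([n] \sm \Des(Q))$ even, that is, to $c_\lambda$. This establishes $\ch(\varepsilon_n \otimes \chi_n) = \sum_{\lambda \vdash n} c_\lambda s_\lambda(x)$. To bring in derangements, I would invoke \cite[Theorem~3.6]{GR93} together with Theorem~\ref{thm:mainA} (already used in the text to see that $\dD_n$ is a fine set): these show that $\sum_{w \in \dD_n} F_{n, \Des(w)}(x)$ is the Frobenius characteristic of a genuine $\mathfrak{S}_n$-character, and the identification of that character --- the top reduced homology of $\kK_n$, twisted by sign --- with the derangement module is exactly what is carried out in \cite[Section~2]{ReiW04}. (Alternatively, $\sum_{w \in \dD_n} F_{n, \Des(w)}(x)$ can be computed directly by inclusion--exclusion over the fixed-point set, which reduces matters to evaluating $\sum_{w \colon w|_A = \mathrm{id}} F_{n, \Des(w)}(x)$ for $A \subseteq [n]$.)

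Finally, the remaining equality $\sum_{w \in \dD_n} F_{n, \Des(w)}(x) = \sum_{w \in \eE_n} F_{n, \Des(w^{-1})}(x)$ is, by the linear independence of the fundamental quasisymmetric functions, equivalent to the assertion that $|\{w \in \dD_n : \Des(w) = S\}| = |\{w \in \eE_n : \Des(w^{-1}) = S\}|$ for every $S \subseteq [n-1]$; this is precisely the Désarménien--Wachs equidistribution theorem~\cite{DeWa88, DeWa93}, so this identity and the final ``in particular'' clause are obtained simultaneously. I expect the main obstacle to be the step tying the purely combinatorial derangement quasisymmetric function to the twisted homology representation $\varepsilon_n \otimes \chi_n$, and --- on a more clerical level --- keeping the sign twist $\varepsilon_n$ and the distinction between $\Des(w)$ and $\Des(w^{-1})$ consistent with the conventions of the three source papers; the self-contained inclusion--exclusion route avoids the representation-theoretic input but requires careful bookkeeping of the descents occurring at the boundary between the fixed and the non-fixed positions of a partial-identity permutation.
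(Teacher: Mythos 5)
The paper does not actually prove Theorem~\ref{thm:DRWW} --- it is quoted from \cite{DeWa88, DeWa93, ReiW04} --- so the relevant benchmark is the paper's proof of the $B_n$-analogue, Theorem~\ref{thm:derB}. Your computation of $\ch(\varepsilon_n \otimes \chi_n) = \sum_{k=0}^n (-1)^{n-k} h_1^k e_{n-k} = \sum_{\lambda} c_\lambda s_\lambda(x)$ is correct and complete: the Pieri-rule identification of $\langle h_1^k e_{n-k}, s_\lambda\rangle$ with $\#\{Q \in \SYT(\lambda) : \min([n]\sm\Des(Q)) \ge n-k\}$ and the telescoping of the alternating sum both check out, and this closed-form route is arguably cleaner than the paper's inductive use of the two-term recursion in type $B$. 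Where your plan inverts the natural logic is the desarrangement sum: you reach it from the derangement sum by \emph{citing} the D\'esarm\'enien--Wachs equidistribution, which makes the final ``in particular'' clause an assumption rather than a conclusion. The direct argument (step \eqref{eq:psi2} of the type $B$ proof) uses only the Robinson--Schensted correspondence: since $\Des(w) = \Des(Q(w))$, $\Des(w^{-1}) = \Des(P(w))$, and membership in $\eE_n$ depends only on $Q(w)$, grouping by recording tableau and applying Proposition~\ref{prop:sFexpansion} gives $\sum_{w \in \eE_n} F_{n, \Des(w^{-1})}(x) = \sum_\lambda c_\lambda s_\lambda(x)$ with no outside input; the equidistribution then \emph{follows} from the linear independence of the $F_{n,S}$.

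The genuine gap is the derangement sum $\sum_{w \in \dD_n} F_{n,\Des(w)}(x)$. Your primary route defers exactly this identification to \cite{ReiW04}, and your self-contained alternative --- inclusion--exclusion over the sets $\{w : w|_A = \mathrm{id}\}$ --- does not go through as sketched: such a set is not a shuffle class, so its descent quasisymmetric function does not factor as $h_{|A|}$ times a lower-rank contribution (already for $n=2$, $A=\{1\}$ the sum is $F_{2,\varnothing} = h_2 \ne h_1 \cdot h_1$); the boundary descents you flag as ``bookkeeping'' are an actual obstruction. The argument that works, and the one the paper runs in type $B$ via Theorem~\ref{thm:poirier}, is Gessel--Reutenauer's: $\sum_{w \in \dD_n} F_{n,\Des(w)}(x) = \sum_\nu L_\nu(x)$ summed over partitions $\nu \vdash n$ with no part equal to $1$, the bijection between words and multisets of primitive necklaces gives $h_1^n = \sum_{k=0}^n h_k \, D_{n-k}$ where $D_m$ denotes the degree-$m$ derangement sum, and inverting with $\sum_{i+j=m} (-1)^i e_i h_j = \delta_{m,0}$ yields $D_n = \sum_{k=0}^n (-1)^k e_k h_1^{n-k}$, which is exactly your expression for $\ch(\varepsilon_n \otimes \chi_n)$. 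With that step supplied, all three equalities and the equidistribution corollary are established without circularity.
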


We now turn attention to the hyperoctahedral group $B_n$. We
denote by $\iI_n^B$ the set of words from the alphabet $\Omega_n$
which contain no two letters with equal absolute values and
partially order this set by the subword order. The poset $\iI_n^B$
is the face poset of a regular cell complex $\kK_n^B$ whose faces
are combinatorially isomorphic to simplices; see
Figure~\ref{fig:KB2} for the case $n=2$. The group $B_n$ acts
naturally on $\iI_n^B$ and hence on the augmented cellular chain
complex of $\kK_n^B$ over $\CC$. We denote by $\psi_n$ the
character of the resulting representation on the top reduced
homology $\tilde{H}_{n-1} (\kK_n^B, \CC)$.

  \begin{figure}[htb]
  \includegraphics[width=\textwidth]{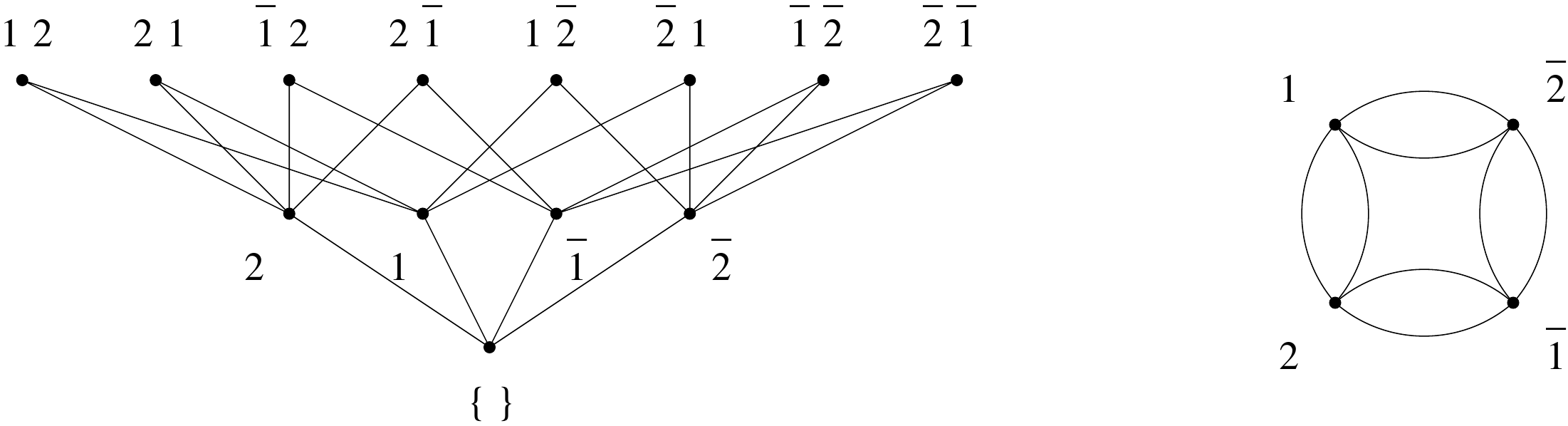}
  \caption{The face poset of the cell complex $\kK_2^B$.}
  \label{fig:KB2}
  \end{figure}

We denote by $\dD_n^B$ the set of derangements (elements without
fixed points, when thought of as permutations of $\Omega_n$) in
$B_n$ and by $\eE^B_n$ the set of $w \in B_n$ for which the
maximum number $k$ such that $w(1) > w(2) > \cdots > w(k) > 0$ is
even, possibly equal to zero (where for $a \in \Omega_n$ we write
$a > 0$ if $a$ is unbarred). Finally, we denote by $\omega_x$ the
standard involution on $\Lambda(x) \otimes \Lambda(y)$ acting on
the $x$ variables. The following statement is our $B_n$-analogue
of Theorem~\ref{thm:DRWW}.

\begin{theorem} \label{thm:derB}
For every positive integer $n$,

\begin{eqnarray*}
  \omega_x \, \ch(\psi_n) &=& \sum_{w \in \dD^B_n} F_{\cDes(w)} (x, y)
  \ = \ \sum_{w \in \eE^B_n} F_{\cDes(w^{-1})} (x, y) \\
  &=& \sum_{(\lambda, \mu) \vdash n} c_{\lambda, \mu} s_\lambda(x)
      s_\mu (y)
\end{eqnarray*}
where $c_{\lambda, \mu}$ is the number of standard Young
bitableaux $(P^+, P^-)$ of shape $(\lambda, \mu)$ such that the
largest number $k$ for which $1, 2,\dots,k$ appear in the first
column of $P^+$ is even (possibly equal to zero).

In particular, the number of derangements $w \in \dD^B_n$ with
$\cDes(w) = \sigma$ is equal to the number of signed permutations
$w \in \eE^B_n$ with $\cDes(w^{-1}) = \sigma$ for every $\sigma
\in \Sigma^B(n)$.
\end{theorem}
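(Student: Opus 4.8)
The plan is to establish the three equalities of Theorem~\ref{thm:derB} by following the template set by Theorem~\ref{thm:DRWW}, replacing each ingredient by its $B_n$-analogue. First I would obtain a Hopf-trace/shellability formula for $\psi_n$ in the spirit of~\eqref{eq:RWrec}. Since $\kK_n^B$ is shellable (being a regular cell complex with simplicial faces, built exactly as in~\cite{ReiW04}), the reduced homology is concentrated in top degree, and the Hopf trace formula gives
\[
\psi_n \ = \ \sum_{k=0}^n \, (-1)^{n-k} \ 1 \uparrow^{B_n}_{(B_1)^k \times B_{n-k}},
\]
where $(B_1)^k \times B_{n-k}$ is the stabilizer of a $k$-letter injective word over $\Omega_n$. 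Applying $\omega_x \circ \ch$ and using Lemma~\ref{lem:ch1B} together with the product formula for $\ch$ of an induced character, each summand $\ch(1 \uparrow^{B_n}_{(B_1)^k \times B_{n-k}})$ equals $(s_1(x)+s_1(y))^k \cdot s_{n-k}(x)$ (the $B_1$-trivial character contributing $s_1(x)$, but the regular-representation bookkeeping of $\kK_n^B$ forces the two signs of $B_1$ to enter, giving $s_1(x)+s_1(y)$; this point needs care). After $\omega_x$, $s_{n-k}(x)$ becomes $s_{1^{n-k}}(x)$ and the alternating sum telescopes, as in \cite[Section~2]{DeWa93}, to $\sum_{(\lambda,\mu)} c_{\lambda,\mu}\, s_\lambda(x) s_\mu(y)$ with $c_{\lambda,\mu}$ as described; this handles the last equality.

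For the first equality I would invoke Theorem~\ref{thm:poirier}: $\dD_n^B$ is a union of conjugacy classes $C_{\alpha,\beta}$ (those with no part of $\alpha$ equal to a fixed point, i.e.\ no positive $1$-cycle), so $\sum_{w \in \dD_n^B} F_{\cDes(w)}(x,y) = \sum L^B_{\alpha,\beta}(x,y)$ is the Frobenius characteristic of a genuine $B_n$-representation. It then suffices to check that this symmetric function agrees with $\omega_x\,\ch(\psi_n)$; by Theorem~\ref{thm:main} this is equivalent to an identity of $\{-1,0,1\}$-weighted signed-descent enumerations, but it is cleaner to verify directly that $\sum_{w\in\dD_n^B} F_{\cDes(w)}(x,y) = \sum_{(\lambda,\mu)} c_{\lambda,\mu} s_\lambda(x) s_\mu(y)$. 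One route: expand $\sum_{w\in B_n} F_{\cDes(w)}(x,y) = \ch$ of the regular representation and use an inclusion--exclusion over fixed points of signed permutations, mirroring the derangement sieve of~\cite{DeWa93, ReiW04}; the fixed-point structure in $B_n$ involves only \emph{positive} fixed points $w(i)=i$, which is why the combinatorial condition on the first column of $P^+$ (and not $P^-$) appears.

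For the middle equality I would produce an explicit descent-preserving bijection, or equidistribution argument, between $\dD_n^B$ and $\{w^{-1} : w \in \eE_n^B\}$ with respect to $\cDes$. The $\mathfrak{S}_n$-prototype is the Désarménien--Wachs bijection; its type-$B$ form should match a signed permutation of signed cycle type avoiding positive $1$-cycles with one whose inverse has its initial positive decreasing run of even length. Rather than constructing this bijection from scratch, I would argue as in the proof of Theorem~\ref{thm:DRWW}: show that $\{w^{-1} : w\in\eE_n^B\}$, endowed with $\cDes$, is also a fine set for $\omega_x\,\ch(\psi_n)$ by identifying $\sum_{w\in\eE_n^B} F_{\cDes(w^{-1})}(x,y)$ with $\sum_{(\lambda,\mu)} c_{\lambda,\mu}\, s_\lambda(x) s_\mu(y)$ via the $B_n$ Robinson--Schensted correspondence (Proposition~\ref{BRS_properties}): the condition ``the initial positive decreasing run of $w$ has even length'' is, after applying $\cDes(w)=\cDes(Q^B(w))$ and part (a) of that proposition, exactly the condition that the largest $k$ with $1,\dots,k$ in the first column of $P^+ = Q^B(w^{-1})$ is even. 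Then Theorem~\ref{thm:main}, or rather the uniqueness clause of Theorem~\ref{thm:char_to_cDes}, forces the three $\cDes$-distributions to coincide, which yields the final ``in particular'' assertion for free.

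I expect the main obstacle to be the first step: correctly tracking how the two one-dimensional characters of $B_1$ (trivial and sign) enter the Hopf-trace expansion of $\psi_n$, and hence pinning down the precise symmetric-function identity, including which of $x$ and $y$ gets the $\omega$-twist and why only positive fixed points matter. Once that bookkeeping is settled, the telescoping and the two equidistribution identities follow the $\mathfrak{S}_n$ arguments of~\cite{DeWa88, DeWa93, ReiW04} essentially verbatim, with Theorem~\ref{thm:poirier}, Proposition~\ref{BRS_properties} and Theorem~\ref{thm:main} supplying the type-$B$ inputs.
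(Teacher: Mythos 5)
Your proposal is correct and follows essentially the same route as the paper: the Hopf trace/shellability formula $\psi_n = \sum_{k=0}^n (-1)^{n-k}\, 1\uparrow^{B_n}_{(\mathfrak{S}_1)^k\times B_{n-k}}$ (Proposition~\ref{prop:psirec}), Frobenius characteristic plus $\omega_x$ and Pieri to get the $c_{\lambda,\mu}$ expansion, Theorem~\ref{thm:poirier} with a sieve over positive $1$-cycles for the derangement sum, and the type $B$ Robinson--Schensted correspondence for the $\eE^B_n$ sum. The one point you flagged as needing care resolves as follows: the stabilizer of a $k$-letter injective word over $\Omega_n$ is $(\mathfrak{S}_1)^k\times B_{n-k}$, not $(B_1)^k\times B_{n-k}$, and since $1\uparrow^{B_1}_{\mathfrak{S}_1}$ is the regular representation of $B_1$ with characteristic $s_1(x)+s_1(y)$, each summand indeed contributes $(s_1(x)+s_1(y))^k\, s_{n-k}(x)$ as you asserted.
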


\begin{example}
To illustrate the theorem, let us compute explicitly these 
expressions for $\omega_x \, \ch(\psi_n)$ when $n=2$. The 
bitableaux which satisfy the condition in the statement of
the theorem are
\[ ( \ \young(1,2) \ , \ \varnothing \, ), \ \ \
   ( \ \young(2) \ , \ \young(1) \ ), \ \ \
   ( \, \varnothing \ , \ \young(12) \ ), \ \ \
   ( \, \varnothing \ , \ \young(1,2) \ ). \]
As a result, the third expression claimed for $\omega_x \, \ch(\psi_n)$ 
by the theorem gives
\[ \omega_x \, \ch(\psi_2) \ = \ s_{(1,1)}(x) \, + \,
s_{(1)}(x) s_{(1)}(y) \, + \, s_{(2)}(y) \, + \, s_{(1,1)}(y).
\]
Moreover, the signed descent sets of the elements of
$$\dD^B_2 = \{ (2,1), (2, \bar{1}), (\bar{2}, 1), (\bar{2}, \bar{1}), 
(\bar{1}, \bar{2}) \},$$ written as signed compositions, are $(1,1)$, 
$(1,\bar{1})$, $(\bar{1},1)$, $(\bar{1},\bar{1})$ and $(\bar{2})$ 
and the same holds for those of $\{ w^{-1}: w \in \eE^B_2 \} = \{ (2,1),
(\bar{1}, 2), (\bar{1}, \bar{2}), (2, \bar{1}), (\bar{2}, \bar{1}) \}$. 
Hence, either of the first two expressions claimed for 
$\omega_x \, \ch(\psi_n)$ gives
\[ \omega_x \, \ch(\psi_2) \ = \ F_{(1,1)}(x,y) \, + \,
F_{(1,\bar{1})}(x,y) \, + \, F_{(\bar{1},1)}(x,y) \, + \,
F_{(\bar{2})}(x,y) \, + \, F_{(\bar{1},\bar{1})}(x,y). \]
By direct computation or use of
Proposition~\ref{prop:schurprod}, one can verify that the
two formulas for $\omega_x \, \ch(\psi_2)$ are equivalent.
\end{example}

The proof of part of Theorem~\ref{thm:derB} will be based on the 
following proposition. The proof of the proposition is a direct 
analogue of the proofs of Propositions~2.1 and 2.2 in
\cite[Section~2]{ReiW04}.

\begin{proposition} \label{prop:psirec}
For every positive integer $n$,
  \begin{equation} \label{eq:psirec1}
    \psi_n \ = \ \sum_{k=0}^n \, (-1)^{n-k} \ 1 \!
                \uparrow^{B_n}_{(\mathfrak{S}_1)^k \times B_{n-k}}.
  \end{equation}
For every $n \ge 2$,
  \begin{equation} \label{eq:psirec2}
    \psi_n \ = \ (1 \otimes \psi_{n-1}) \!
    \uparrow^{B_n}_{\mathfrak{S}_1 \times B_{n-1}} + \
    (-1)^n \, 1_{B_n}.
  \end{equation}
\end{proposition}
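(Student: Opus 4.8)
The plan is to follow the template of Reiner--Webb~\cite[Section~2]{ReiW04}, adapting each step to the signed setting. The first equation~(\ref{eq:psirec1}) should come from the Hopf trace formula applied to the augmented cellular chain complex of $\kK_n^B$. First I would observe that, since $\kK_n^B$ is shellable (its face poset $\iI_n^B$ has the same local structure as $\iI_n$, with the interval structure of a Boolean-type lattice on signed letters), all reduced homology is concentrated in the top degree $n-1$, so the Lefschetz character of the $B_n$-action on the chain complex equals $(-1)^{n-1}\psi_n$ up to the sign coming from the augmentation. The $k$-th chain group has a $\CC$-basis indexed by injective words of length $k$ over $\Omega_n$; as a $B_n$-module this is the permutation module on ordered $k$-tuples of elements of $\Omega_n$ with distinct absolute values, which is exactly $1\uparrow^{B_n}_{(\mathfrak{S}_1)^k\times B_{n-k}}$ (the stabilizer of such a tuple permutes and signs the remaining $n-k$ coordinates freely, and fixes the $k$ chosen signed letters pointwise). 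Summing the characters of the chain groups with signs $(-1)^k$ and matching the augmentation sign yields~(\ref{eq:psirec1}); this step is essentially bookkeeping once the module identification is in place.

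For~(\ref{eq:psirec2}) I would give the recursive argument of~\cite[Proposition~2.2]{ReiW04}. Starting from~(\ref{eq:psirec1}) for $\psi_n$, I would split the sum according to whether $k=0$ or $k\ge 1$: the $k=0$ term is $(-1)^n 1_{B_n}$, and for $k\ge 1$ I would use transitivity of induction, $(\mathfrak{S}_1)^k\times B_{n-k} = \mathfrak{S}_1\times\bigl((\mathfrak{S}_1)^{k-1}\times B_{n-k}\bigr)$, to write $1\uparrow^{B_n}_{(\mathfrak{S}_1)^k\times B_{n-k}} = \bigl(1\otimes(1\uparrow^{B_{n-1}}_{(\mathfrak{S}_1)^{k-1}\times B_{n-k}})\bigr)\uparrow^{B_n}_{\mathfrak{S}_1\times B_{n-1}}$. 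Re-indexing $j=k-1$ and pulling the induction functor $(1\otimes -)\uparrow^{B_n}_{\mathfrak{S}_1\times B_{n-1}}$ (which is additive) outside the sum, the inner sum $\sum_{j=0}^{n-1}(-1)^{(n-1)-j}\,1\uparrow^{B_{n-1}}_{(\mathfrak{S}_1)^j\times B_{n-1-j}}$ is exactly $\psi_{n-1}$ by~(\ref{eq:psirec1}) applied at $n-1$. Collecting the overall sign $(-1)^{n-k}=(-1)^{(n-1)-j}$, which matches, gives $\psi_n = (1\otimes\psi_{n-1})\uparrow^{B_n}_{\mathfrak{S}_1\times B_{n-1}} + (-1)^n 1_{B_n}$.

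The step I expect to require the most care is the shellability (or at least Cohen--Macaulayness) claim underlying the concentration of homology in top degree, since only then does the alternating sum of chain-group characters compute $\pm\psi_n$ rather than a virtual Lefschetz character; I would either cite the relevant general fact about the complex of injective words over a set (the type $B$ version being covered by the same EL-shelling argument as the type $A$ one, applied to $\Omega_n$ in place of $[n]$), or remark that the reader content to take~(\ref{eq:psirec1}) as a definition of $\psi_n$ may skip it, exactly as suggested after~(\ref{eq:RWrec}) for $\chi_n$. A secondary, purely notational, point to get right is the precise sign conventions in the Hopf trace formula for the \emph{augmented} chain complex (whether the empty word contributes in degree $-1$), so that the constant $(-1)^n 1_{B_n}$ in~(\ref{eq:psirec2}) emerges with the stated sign; this matches the $k=0$ term of~(\ref{eq:psirec1}) and is consistent with the $n=2$ computation in the example following Theorem~\ref{thm:derB}.
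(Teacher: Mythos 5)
Your proposal is correct and follows essentially the same route as the paper: identify the $(k-1)$-st augmented cellular chain group of $\kK_n^B$ as the transitive permutation module $1\uparrow^{B_n}_{(\mathfrak{S}_1)^k\times B_{n-k}}$ (with stabilizer fixing the $k$ chosen signed letters pointwise and acting as $B_{n-k}$ on the rest), invoke shellability together with the Hopf trace formula to get~(\ref{eq:psirec1}), and then derive~(\ref{eq:psirec2}) by splitting off the $k=0$ term and applying transitivity of induction to the remaining sum. The one caveat is your suggested justification of shellability: $\kK_n^B$ is \emph{not} the full complex of injective words on the $2n$-letter alphabet $\Omega_n$ (words containing both $i$ and $\bar{i}$ are excluded), so it is not literally the type $A$ argument with $\Omega_n$ in place of $[n]$; the paper instead cites Jonsson--Welker \cite[Theorem~1.2]{JW09}, which covers exactly such complexes of injective words with forbidden pairs.
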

\begin{proof}
Following arguments in \cite[Section~2]{ReiW04}, we observe that
the words of given rank $k$ in $\iI_n^B$ index the elements of a
basis of the augmented cellular chain group $C_{k-1} (\kK_n^B,
\CC)$. The group $B_n$ acts transitively on these words with
stabilizer isomorphic to the Young subgroup $(\mathfrak{S}_1)^k
\times B_{n-k}$. As a result, the character of $B_n$ acting on
$C_{k-1} (\kK_n^B, \CC)$ is equal to the induced character $1 \!
\uparrow^{B_n}_{(\mathfrak{S}_1)^k \times B_{n-k}}$.
Equation~(\ref{eq:psirec1}) then follows from the Hopf trace
formula and the shellability (see \cite[Theorem~1.2]{JW09}) of
$\kK_n^B$. Finally, using (\ref{eq:psirec1}) we compute that
\begin{eqnarray*}
  \psi_n &=& \sum_{k=0}^n \, (-1)^{n-k} \ 1 \!
                \uparrow^{B_n}_{(\mathfrak{S}_1)^k \times B_{n-k}} \\
  &=& (-1)^n \, 1_{B_n} \, + \, \sum_{k=1}^n \, (-1)^{n-k} \ 1 \!
                \uparrow^{B_n}_{(\mathfrak{S}_1)^k \times B_{n-k}} \\
  &=& (-1)^n \, 1_{B_n} \, + \, \left( 1 \otimes \sum_{k=1}^n \,
      (-1)^{n-k} \ 1 \! \uparrow^{B_{n-1}}_{(\mathfrak{S}_1)^{k-1}
      \times B_{n-k}} \right) \uparrow^{B_n}_{\mathfrak{S}_1 \times
      B_{n-1}} \\
  & & \\
  &=& (-1)^n \, 1_{B_n} \, + \, (1 \otimes \psi_{n-1}) \!
      \uparrow^{B_n}_{\mathfrak{S}_1 \times B_{n-1}}.
\end{eqnarray*}
This verifies Equation~(\ref{eq:psirec2}).
\end{proof}

\begin{proof}[Proof of Theorem~\ref{thm:derB}] Applying the
Frobenius characteristic to Equation~(\ref{eq:psirec2}) and using
Lemma~\ref{lem:ch1B} gives
  $$ \ch(\psi_n) \ = \ s_1(x, y) \, \ch(\psi_{n-1}) \, + \, (-1)^n
     s_n(x) $$
and hence
  \begin{equation} \label{eq:psirec3}
    \omega_x \, \ch(\psi_n) \ = \ s_1(x, y) \cdot \omega_x \,
    \ch(\psi_{n-1}) \, + \, (-1)^n  e_n(x)
  \end{equation}
for $n \ge 2$, where $s_1 (x, y) = s_1 (x) + s_1 (y)$. One can
check that $\psi_1$ is the sign character of $B_1$, so that
$\ch(\psi_1) = \omega_x \, \ch(\psi_1) = s_1 (y)$. By this
observation and an easy induction argument, the recurrence
(\ref{eq:psirec3}) and Pieri's rule imply that
  \begin{equation} \label{eq:psi1}
    \omega_x \, \ch(\psi_n) \ = \ \sum_{(\lambda, \mu) \vdash n}
    c_{\lambda, \mu} s_\lambda(x) s_\mu (y),
  \end{equation}
where $c_{\lambda, \mu}$ is as in the statement of
Theorem~\ref{thm:derB}. Furthermore, using
Proposition~\ref{prop:schurprod} and basic properties of the
Robinson--Schensted correspondence of type $B$ (see
Proposition~\ref{BRS_properties}) we get
\begin{eqnarray}
  \sum_{(\lambda, \mu) \vdash n} c_{\lambda, \mu} s_\lambda(x)
  s_\mu (y) &=& \sum_{(\lambda, \mu) \vdash n} c_{\lambda, \mu}
  \sum_{Q \in \SYT(\lambda, \mu)} F_{\cDes(Q)} (x, y) \nonumber \\
  & & \nonumber \\
  &=& \sum_{P \in \SYT'(\lambda, \mu)} \ \sum_{Q \in \SYT(\lambda,
  \mu)} F_{\cDes(Q)} (x, y) \nonumber \\
  & & \nonumber \\
  &=& \sum_{w^{-1} \in \eE^B_n} F_{\cDes(w)} (x, y), \label{eq:psi2}
\end{eqnarray}
where $\SYT'(\lambda, \mu)$ stands for the set of
standard Young bitableaux $P = (P^+, P^-)$ such that the largest
number $k$ for which $1, 2,\dots,k$ appear in the first column of
$P^+$ is even (possibly equal to zero).

Finally, we set
  $$ D^B_n (x, y) \ := \ \sum_{w \in \dD^B_n} F_{\cDes(w)} (x, y) $$
for $n \ge 1$. Since $\dD^B_n$ is a union of conjugacy classes,
Theorem~\ref{thm:poirier} implies that
  $$ D^B_n (x, y) \ = \ \sum L^B_{\alpha, \beta} (x, y) $$
where the sum ranges over all bipartitions $(\alpha, \beta) \vdash
n$ such that $\alpha$ has no part equal to one. We now recall (see
\cite[Section~3]{GR93} and references therein) that there is an
evaluation and length preserving bijection from the set of words
on the alphabet $\aA$ to the set of multisets of primitive
necklaces on $\aA$. Thus, just as in the symmetric group case (see
the proof of \cite[Theorem~8.1]{GR93}), summing $L^B_{\alpha,
\beta} (x, y)$ over all $(\alpha, \beta) \vdash n$ and considering
the number of parts of $\alpha$ equal to one we get
  $$ s_1(x, y)^n \ = \ \sum_{k=0}^n s_k(x) \, D^B_{n-k} (x, y) $$
for $n \ge 0$, where $s_0 (x) = D^B_0 (x, y) = 1$. Equivalently,
we have
\begin{equation} \label{eq:DnB}
  D^B_n (x, y) \ = \ \sum_{k=0}^n \, (-1)^k e_k(x) \, s_1(x, y)^{n-k}
\end{equation}
for $n \ge 1$. This formula and (\ref{eq:psirec3}) show that
  \begin{equation} \label{eq:psi3}
    D^B_n (x, y) \ = \ \omega_x \, \ch(\psi_n)
  \end{equation}
for every $n \ge 1$. The proof follows by combining
Equations~(\ref{eq:psi1}), (\ref{eq:psi2}) and (\ref{eq:psi3}).
\end{proof}

\begin{remark} \label{rem:derB}
The last statement of Theorem~\ref{thm:DRWW} was originally proven
in \cite{DeWa88}. A bijective proof was later provided in
\cite{DeWa93}. An analogous proof of the last statement of
Theorem~\ref{thm:derB} should be possible.
\end{remark}

\subsection{$k$-roots}
\label{subsec:roots}

Given a positive integer $k$, we will denote by $r_{n, k} (w)$ the
number of $k$-roots of $w \in \mathfrak{S}_n$ (meaning, elements
$u \in \mathfrak{S}_n$ with $u^k = w$) and by $r^B_{n, k} (w)$ the
number of $k$-roots of $w \in B_n$. Clearly, $r_{n, k}$ and
$r^B_{n, k}$ are class functions on $\mathfrak{S}_n$ and $B_n$,
respectively. It was shown by Scharf~\cite{Sch91a, Sch91b} that
these functions are actually (non-virtual) characters of
$\mathfrak{S}_n$ and $B_n$. The first statement in the next
theorem follows from \cite[Theorem~1.1]{Roi14} and its proof; the
second statement follows from the first and
Theorem~\ref{thm:mainA}.

\begin{theorem} {\rm (\cite{Roi14})} \label{thm:krootsA}
The set $\{w \in \mathfrak{S}_n: w^k = e\}$ of all $k$-roots of
the identity element in $\mathfrak{S}_n$ is a fine set for the
character $r_{n, k}$. Equivalently,
 \begin{equation} \label{eq:krootsA}
    \ch(r_{n, k}) \ = \ \sum_{w \in \mathfrak{S}_n: \, w^k = e}
    F_{n, \Des(w)} (x)
  \end{equation}
for all positive integers $n, k$.
\end{theorem}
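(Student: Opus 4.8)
The plan is to deduce the statement from Theorem~\ref{thm:mainA} together with a known identity for $\ch(r_{n,k})$. First I would record that $r_{n,k}$ is a genuine (non-virtual) character of $\mathfrak{S}_n$, by Scharf~\cite{Sch91a, Sch91b}, so Theorem~\ref{thm:mainA} applies to it: for the set $\bB = \{w \in \mathfrak{S}_n : w^k = e\}$ equipped with the usual descent map, $\bB$ is a fine set for $r_{n,k}$ \emph{if and only if} $\ch(r_{n,k}) = \sum_{w \in \bB} F_{n,\Des(w)}(x)$. Thus the two displayed assertions in the theorem are equivalent, and it suffices to prove that $\bB$ is a fine set for $r_{n,k}$.

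The next step is the elementary observation that being a fine set is additive over disjoint unions: if $\bB = \bigsqcup_i \bB_i$ and each $\bB_i$, endowed with the restricted descent map, is a fine set for a character $\chi_i$, then $\bB$ is a fine set for $\sum_i \chi_i$. Indeed, $\alpha$-unimodality of $\Des(b)$ and the sign $(-1)^{|\Des(b) \sm S(\alpha)|}$ do not depend on which $\bB_i$ contains $b$, so summing the defining identity~(\ref{eq:fineA}) over the pieces yields the identity for $\bB$. Now $\bB$ is precisely the disjoint union of the conjugacy classes $C_\mu$ of $\mathfrak{S}_n$ over those partitions $\mu \vdash n$ all of whose parts divide $k$, and by the Gessel--Reutenauer theorem~\cite{GR93} (the $\mathfrak{S}_n$-analogue of Theorem~\ref{thm:poirier} recalled at the start of Section~\ref{sec:conj}), each $C_\mu$, with its descent map, is a fine set for the character whose Frobenius characteristic is the Gessel--Reutenauer symmetric function $L_\mu(x)$. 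Hence $\bB$ is a fine set for the character $\chi$ determined by $\ch(\chi) = \sum_\mu L_\mu(x)$, the sum ranging over all $\mu \vdash n$ with every part dividing $k$.

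What remains is the identification $\chi = r_{n,k}$, i.e.\
\[
   \ch(r_{n,k}) \ = \sum_{\substack{\mu \,\vdash\, n \\ \text{every part of }\mu\text{ divides }k}} L_\mu(x),
\]
and this is the one point that is not a formal consequence of the earlier material; I would invoke \cite[Theorem~1.1]{Roi14}, whose proof supplies exactly this identity. For a self-contained derivation, the natural route is to show that both sides equal the degree-$n$ homogeneous component of $\prod_{d \mid k} H[\,\ell_d\,]$, where $H = \sum_{r \ge 0} h_r$ and $\ell_d$ is the $d$-th Lie symmetric function (the Frobenius characteristic of the Lie character of a $d$-cycle): on the right this is the Gessel--Reutenauer product expansion $L_\mu = \prod_m h_{r_m(\mu)}[\,\ell_m\,]$ (with $r_m(\mu)$ the number of parts of $\mu$ equal to $m$), summed over the admissible $\mu$; on the left it is the plethystic formula for the $k$-th root enumerator, obtained by decomposing a permutation $u$ with $u^k = e$ into cycles of lengths dividing $k$ and tracking the effect of $u \mapsto u^k$ on the Frobenius characteristic. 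The main obstacle is precisely this identification of $\chi$ with $r_{n,k}$; every other step is routine given the results already available in the excerpt.
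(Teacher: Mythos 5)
Your proposal is correct and matches the paper's treatment: the paper likewise obtains the equivalence of the two displayed statements from Theorem~\ref{thm:mainA} (using Scharf's result that $r_{n,k}$ is a genuine character) and defers the substantive content --- the identity $\ch(r_{n,k}) = \sum_\mu L_\mu(x)$ over partitions $\mu$ all of whose parts divide $k$ --- to \cite[Theorem~1.1]{Roi14} and its proof. Your sketched self-contained derivation, via the decomposition of $\{w : w^k=e\}$ into conjugacy classes, the Gessel--Reutenauer theorem, and the plethystic generating function $\prod_{d \mid k} H[\ell_d]$, is precisely the strategy the paper itself carries out in detail for the $B_n$-analogue (Theorem~\ref{thm:krootsB}), so it is sound.
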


The following theorem gives a partial answer to
\cite[Question~3.4]{Roi14}. It is expected that this theorem can
be extended to the (possibly more complicated) case of even
positive integers $k$, which is left open in the present writing
(the case $k=2$ follows from Proposition~\ref{prop:GelfandB},
since $r^B_{n,2}$ is equal to the character of the Gelfand model
of $B_n$; see~\cite[p.~58]{Isa76}).

\begin{theorem} \label{thm:krootsB}
The set $\{w \in B_n: w^k = e\}$ of all $k$-roots of the identity
element in $B_n$ is a fine set for the character $r^B_{n, k}$ for
every odd positive integer $k$ and all $n \ge 1$. Equivalently,
 \begin{equation} \label{eq:krootsB}
    \ch(r^B_{n, k}) \ = \ \sum_{w \in B_n: \, w^k = e} F_{\cDes(w)}
    (x, y)
  \end{equation}
for all positive integers $n, k$ with $k$ odd.
\end{theorem}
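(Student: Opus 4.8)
The plan is to apply Theorem~\ref{thm:main}: since $r^B_{n,k}$ is a genuine (non-virtual) $B_n$-character by Scharf~\cite{Sch91a, Sch91b}, it suffices to verify identity~(\ref{eq:krootsB}), namely $\ch(r^B_{n,k}) = \sum_{w \in B_n:\, w^k = e} F_{\cDes(w)}(x, y)$. The first step is to identify the summation set. Since $k$ is odd, any $w$ with $w^k = e$ has odd order and hence no negative cycles; therefore $\{w \in B_n : w^k = e\}$ is precisely the union of the conjugacy classes $C_{\alpha, \varnothing}$ over all partitions $\alpha \vdash n$ all of whose parts divide $k$. Poirier's Theorem~\ref{thm:poirier}, summed over these classes, then gives
\[
\sum_{w \in B_n:\, w^k = e} F_{\cDes(w)}(x, y) \ = \ \sum_{\substack{\alpha \vdash n \\ \alpha_i \mid k \text{ for all } i}} L^B_{\alpha, \varnothing}(x, y).
\]
So the theorem reduces to the purely character-theoretic identity $\ch(r^B_{n,k}) = \sum_{\alpha} L^B_{\alpha, \varnothing}(x, y)$, a $B_n$-analogue of the $\mathfrak{S}_n$-identity underlying Theorem~\ref{thm:krootsA}.

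The second step computes the left-hand side. From $r^B_{n,k}(w) = |\{u \in B_n : u^k = w\}|$ and the definition~(\ref{eq:chB}), $\ch(r^B_{n,k}) = \frac{1}{2^n n!}\sum_{u \in B_n} p_{u^k}(x, y)$. I would then analyze the $k$-th power cycle by cycle: a positive cycle of $u$ of length $m$ contributes $\gcd(m,k)$ positive cycles of length $m/\gcd(m,k)$ to $u^k$, and---using $\gcd(2m,k) = \gcd(m,k)$ for odd $k$, together with the fact that $a$ and $\bar a$ remain in a common $u^k$-cycle---a negative cycle of length $m$ contributes $\gcd(m,k)$ negative cycles of length $m/\gcd(m,k)$. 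Substituting into the cycle-index (exponential) formula for $B_n = \ZZ_2 \wr \mathfrak{S}_n$ yields
\[
\sum_{n \ge 0} \ch(r^B_{n,k}) \ = \ \exp\left( \sum_{m \ge 1} \frac{(p^+_{m/\gcd(m,k)})^{\gcd(m,k)} + (p^-_{m/\gcd(m,k)})^{\gcd(m,k)}}{2m} \right),
\]
an explicit plethystic product which can be reorganized as a product over the divisors of $k$.

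The third step is to show that $\sum_{n \ge 0}\sum_{\alpha} L^B_{\alpha, \varnothing}(x, y)$ equals the same expression. Here I would use the evaluation- and length-preserving bijection between words over $\aA = \{x_1, x_2, \dots, y_1, y_2, \dots\}$ and multisets of primitive necklaces over $\aA$ (as in \cite[Section~3]{GR93}, also invoked in the proof of Theorem~\ref{thm:derB}): the restriction to partitions $\alpha$ with all parts dividing $k$ selects multisets of primitive necklaces whose lengths divide $k$, and the condition $\beta = \varnothing$ in the signed cycle type keeps only those necklaces carrying an even number of $y$-letters. Isolating primitive necklaces of each length by M\"obius inversion, exactly as in the symmetric-group case treated in~\cite{Roi14}, converts this into the same plethystic product as above, the even-$y$ constraint accounting precisely for the symmetrized power sums $\tfrac{1}{2}(p^+_j \pm p^-_j)$ that occur. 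The two sides therefore agree; in the limit where the $y$-variables and negative cycles are discarded, the whole argument specializes to Roichman's proof of Theorem~\ref{thm:krootsA}.

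The step I expect to be the main obstacle is the third one: matching the combinatorial ``even number of $y$-letters per primitive necklace'' description of $L^B_{\alpha, \varnothing}$ with the $p^+_j$/$p^-_j$ bookkeeping coming from $\ch(r^B_{n,k})$, and verifying that the hypothesis ``$k$ odd'' is used in exactly the places needed---namely to rule out negative cycles of $w$ with $w^k = e$, and to guarantee $\gcd(2m,k) = \gcd(m,k)$ so that negative cycles of a general $u$ power correctly to negative cycles of $u^k$. It is precisely the failure of both reductions for even $k$ (where $w^k = e$ admits negative cycles and the $\ZZ_2$-part of the wreath product interacts nontrivially with $k$-th powers) that leaves that case open.
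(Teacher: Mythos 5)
Your proposal follows essentially the same route as the paper's proof: identify the $k$-roots of $e$ (for odd $k$) as the union of conjugacy classes $C_{\alpha,\varnothing}$ with all parts of $\alpha$ dividing $k$, apply Poirier's theorem to reduce to an identity between $\sum_\alpha L^B_{\alpha,\varnothing}(x,y)$ and the exponential-formula expression for $\ch(r^B_{n,k})$, and close the gap via the decomposition $L^B_d = \tfrac{1}{2}(L_d(x,y)+L_d(x,-y))$ together with M\"obius inversion (the content of the paper's Lemma~\ref{lem:gen}). The step you flag as the main obstacle is exactly the one the paper isolates into that lemma, and your account of where the oddness of $k$ enters matches the paper's.
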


\begin{example}
For $n=k=3$, the $k$-roots of the identity element in $B_n$ 
are $(1, 2, 3)$, $(2, 3, 1)$, $(3, 1, 2)$, $(\bar{2}, \bar{3}, 
1)$, $(\bar{2}, 3, \bar{1})$, $(2, \bar{3}, \bar{1})$, 
$(\bar{3}, \bar{1}, 2)$, $(\bar{3}, 1, \bar{2})$ and 
$(3, \bar{1}, \bar{2})$. Computing the signed descent 
compositions of these signed permutations and applying
Theorem~\ref{thm:krootsB}, we find that

\begin{eqnarray*}
  \ch(r^B_{3, 3}) &=& F_{(3)}(x,y) \, + \, F_{(2,1)}(x,y) \, + \,
  F_{(1,2)}(x,y) \, + \, F_{(\bar{2},1)}(x,y) \, +   \\
  & &  F_{(1,\bar{2})}(x,y) \, +\, F_{(1,\bar{1},1)}(x,y) \, + \, 2 F_{(\bar{1},1,\bar{1})}
  (x,y) \, + \, F_{(1,\bar{1},\bar{1})}.
\end{eqnarray*}
The multiset of these signed compositions coincides with the
multiset of signed descent compositions of the standard Young 
bitableaux of shapes $((3),\varnothing)$, $((2,1),\varnothing)$, 
$((1),(2))$ and $((1),(1,1))$. Thus, we may deduce from Proposition~\ref{prop:schurprod} that
\[ \ch(r^B_{3, 3}) \ = \ s_{(3)}(x) \, + \, s_{(2,1)}(x) \, + \,
s_{(1)}(x) s_{(2)}(y) \, + \, s_{(1)}(x) s_{(1,1)}(y).
\]
This yields the decomposition $r^B_{3, 3} = 
\chi^{((3),\varnothing)} + \chi^{((2,1),\varnothing)} +
\chi^{((1),(2)} + \chi^{((1),(1,1))}$.
\end{example}

The proof of Theorem~\ref{thm:krootsB} follows the computation 
of $\ch(r_{n, k})$ in \cite{Thi92}, as presented in the solution to
\cite[Exercise~7.69~(c)]{StaEC2}. We will write $L^B_n (x, y)$ for
the formal sum of the evaluations of all primitive necklaces of
length $n$ over the alphabet $\aA$ having an even number of $y$
variables (i.e. for the function $L^B_{\alpha, \beta} (x, y)$ when
$\alpha = (n)$ and $\beta = \varnothing$). We will also write $L_n
(x, y)$ for the corresponding formal sum with no restriction on
the number of $y$ variables. We will first establish the following
lemma.

\begin{lemma} \label{lem:gen}
For every odd positive integer $k$
  $$ \sum_{d \, | k} \sum_{n \ge 1} \, \frac{1}{n} L^B_d (x^n, y^n)
     \, t^{nd}  = 
     \sum_{n \ge 1} \, \frac{1}{2n} \left( (p^+_{n/(n, k)}
     (x, y))^{(n,k)}  +  (p^-_{n/(n, k)} (x, y))^{(n,k)} \right)
     t^n, $$
where $x^n = (x^n_1, x^n_2,\dots)$, $y^n = (y^n_1, y^n_2,\dots)$
and $(n, k)$ denotes the greatest common divisor of $n$ and $k$.
\end{lemma}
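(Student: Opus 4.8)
The plan is to fix an integer $N\ge 1$ and to compare the coefficients of $t^N$ on the two sides, writing $g:=\gcd(N,k)$. Since $k$ is odd, so is $g$, and therefore \emph{every divisor of $g$ is odd}; this is the one and only place where the hypothesis that $k$ be odd enters, and it is essential. On the left‑hand side a summand $\frac1n L^B_d(x^n,y^n)\,t^{nd}$ contributes to $t^N$ precisely when $d\mid k$ and $nd=N$, i.e.\ when $d\mid\gcd(N,k)=g$ and $n=N/d$. Hence the coefficient of $t^N$ on the left equals $\frac1N\sum_{d\mid g}d\,L^B_d(x^{N/d},y^{N/d})$, while that on the right equals $\frac1{2N}\big((p^+_{N/g}(x,y))^{g}+(p^-_{N/g}(x,y))^{g}\big)$. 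So it suffices to prove, for every $N\ge1$ and $g=\gcd(N,k)$,
\[
 \sum_{d\mid g} d\,L^B_d(x^{N/d},y^{N/d}) \ = \ \tfrac12\big((p^+_{N/g}(x,y))^{g}+(p^-_{N/g}(x,y))^{g}\big).
\]

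For this I would use three standard facts. First, the classical formula for the necklace (Lie) symmetric function: on the alphabet $\aA$, whose $e$‑th power sum is $\sum_{i\ge1}(x_i^e+y_i^e)=p^+_e(x,y)$, one has $L_d(x,y)=\frac1d\sum_{e\mid d}\mu(e)\,(p^+_e(x,y))^{d/e}$ (see \cite[Section~2]{GR93} and the references therein). Second, since $L^B_d(x,y)$ is obtained from $L_d(x,y)$ by retaining only the terms with an even number of $y$‑variables, $L^B_d(x,y)=\frac12\big(L_d(x,y)+L_d(x,-y)\big)$, where $L_d(x,-y)$ denotes the result of the substitution $y_i\mapsto-y_i$; and when $d$ is odd every $e\mid d$ is odd, so under this substitution $p^+_e$ turns into $\sum_{i\ge1}(x_i^e-y_i^e)=p^-_e(x,y)$, giving $L_d(x,-y)=\frac1d\sum_{e\mid d}\mu(e)(p^-_e(x,y))^{d/e}$ for odd $d$. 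Third, applying $x_i\mapsto x_i^m$, $y_i\mapsto y_i^m$ sends $p^\pm_e$ to $p^\pm_{em}$. Putting these together, for every odd $d\mid g$,
\[
 d\,L^B_d(x^{N/d},y^{N/d}) \ = \ \tfrac12\sum_{e\mid d}\mu(e)\Big[(p^+_{eN/d}(x,y))^{d/e}+(p^-_{eN/d}(x,y))^{d/e}\Big].
\]

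It then remains to sum over $d\mid g$ and simplify. Reindexing the double sum by $(e,f)$ with $f:=d/e$ — so that $d=ef$, the condition $d\mid g$ becomes $ef\mid g$, the exponent $d/e$ becomes $f$, and $eN/d=N/f$ — the right‑hand side collapses to $\tfrac12\sum_{f\mid g}\big[(p^+_{N/f}(x,y))^{f}+(p^-_{N/f}(x,y))^{f}\big]\sum_{e\mid g/f}\mu(e)$. By the defining property $\sum_{e\mid m}\mu(e)=[m=1]$ of the M\"obius function the inner sum vanishes unless $f=g$, so only that term survives, yielding $\tfrac12\big((p^+_{N/g}(x,y))^g+(p^-_{N/g}(x,y))^g\big)$, as required. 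Since $N$ was arbitrary, the lemma follows.

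The conceptual content is thus entirely contained in the necklace formula of the first step; the remaining difficulties are bookkeeping. The most delicate point is verifying that the substitutions $y_i\mapsto-y_i$ and $x_i\mapsto x_i^m,\ y_i\mapsto y_i^m$ interact with the necklace formula exactly as claimed, so that it is $p^-$ (and not $p^+$) that appears in the signed part — which is precisely where the oddness of $d$, hence of $k$, is used. Beyond that one must only keep the two $t$‑gradings straight (the exponent being $nd$ on the left and $n$ on the right) and carry out the reindexing $(d,e)\leftrightarrow(e,f)$ together with the M\"obius cancellation carefully.
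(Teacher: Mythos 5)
Your proof is correct and rests on the same engine as the paper's: the M\"obius/necklace formula $L_m = \frac{1}{m}\sum_{e\mid m}\mu(e)\,p_e^{m/e}$, the decomposition $L^B_d = \tfrac12(L_d(x,y)+L_d(x,-y))$, and the observation that oddness of $k$ forces every relevant $d$ to be odd so that $p^+_e(x,-y)=p^-_e(x,y)$. The only (stylistic) difference is that you verify the $p^+$ half directly and run the M\"obius cancellation forward through a coefficient-of-$t^N$ comparison, whereas the paper cites \cite[Eq.~(7.216)]{StaEC2} for that half and obtains the $p^-$ half by an explicit M\"obius inversion; the two computations are equivalent.
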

\begin{proof}
The definition of the functions $L_d (x, y)$ and $L^B_d (x, y)$ as
generating functions of necklaces implies that $L^B_d (x, y) =
(L_d (x, y) + L_d (x, -y))/2$. Since
  \begin{equation} \label{eq:Lgen1}
    \sum_{d \, | k} \, \sum_{n \ge 1} \frac{1}{n} L_d (x^n, y^n)
    \, t^{nd} \ = \ \sum_{n \ge 1} \, \frac{1}{n} (p^+_{n/(n, k)}
    (x, y))^{(n,k)} \, t^n
  \end{equation}
follows from \cite[Equation~(7.216)]{StaEC2}, it suffices to
verify that
  \begin{equation} \label{eq:Lgen2}
    \sum_{d \, | k} \sum_{n \ge 1} \, \frac{1}{n} L_d (x^n, -y^n)
    \, t^{nd} \ = \ \sum_{n \ge 1} \, \frac{1}{n} (p^-_{n/(n, k)}
    (x, y))^{(n,k)} \, t^n.
  \end{equation}
We sketch the proof of this equation, which is similar to that of
(\ref{eq:Lgen1}). Let us denote by $\mu: \{1, 2,\dots\} \to \ZZ$
the number theoretic M\"obius function. The following well known
(see, for instance, \cite[Equation~2.2]{GR93} or
\cite[Exercise~7.89~(a)]{StaEC2}) formula
  $$ L_m (x) \ = \ \frac{1}{m} \sum_{d \, | \, m} \mu(d)
                   (p_d (x))^{m/d} $$
can be proved by a simple M\"obius inversion argument. Replacing
$x$ by $(x^{n/m}, -y^{n/m})$ in this equation and assuming that
$m$ is odd gives
  $$ mL_m (x^{n/m}, -y^{n/m}) \ = \ \sum_{d \, | \, m} \mu(d)
     (p^-_{nd/m} (x, y))^{m/d} \ = \ \sum_{d \, | \, m} \mu(m/d)
     (p^-_{n/d} (x, y))^d $$
whenever $n$ is divisible by $m$. Applying M\"obius inversion, we
get
  $$ (p^-_{n/m} (x, y))^m \ = \ \sum_{d \, | \, m} dL_d
     (x^{n/d}, -y^{n/d}) $$
whenever $m$ is odd and $n$ is divisible by $m$. Since $k$ is
assumed to be odd, we may replace $m$ by $(n, k)$ in the last
equation to get
  $$ (p^-_{n/(n, k)} (x, y))^{(n, k)} \ = \ \sum_{d \, | \, k}
     \sum_{d \, | \, n} dL_d (x^{n/d}, -y^{n/d}) $$
for every positive integer $n$. Multiplying by $t^n/n$ and summing
over for all $n \ge 1$ we get (\ref{eq:Lgen2}) and the proof
follows.
\end{proof}

\begin{proof}[Proof of Theorem~\ref{thm:krootsB}] From the
defining equation (\ref{eq:chB}) of the Frobenius characteristic
map we have
  $$ \ch(r^B_{n, k}) \ = \ \frac{1}{2^n n!} \, \sum_{w \in B_n}
     r^B_{n, k} (w) p_w(x, y) \ = \ \frac{1}{2^n n!} \, \sum_{u \in
     B_n} p_{u^k}(x, y). $$
As in the solution to \cite[Exercise~7.69~(c)]{StaEC2} we observe
that the $k$th power of a positive $m$-cycle in $B_n$ is a product
of $(m, k)$ disjoint positive cycles of length $m/(m, k)$ and,
since $k$ is odd, the analogous statement holds for negative
cycles. An application of the exponential formula
\cite[Corollary~5.1.9]{StaEC2} then yields
  \begin{equation} \label{eq:rgen}
    \sum_{n \ge 0} \ch(r^B_{n, k}) t^n \ = \ \exp \,
    \sum_{n \ge 1} \left( (p^+_{n/(n, k)} (x, y))^{(n,k)} \, + \,
    (p^-_{n/(n, k)} (x, y))^{(n,k)} \right) \frac{t^n}{2n}.
  \end{equation}

We now denote by $G_{n,k}(x, y)$ the right-hand side of
(\ref{eq:krootsB}) and compute its generating function as follows.
Since $k$ is odd, a signed permutation $w \in B_n$ is a $k$-root
of the identity element if and only if the cycle decomposition of
$w$ involves only positive cycles whose lengths divide $k$. Thus,
it follows from Theorem~\ref{thm:poirier} that
  $$ \sum_{n \ge 0} G_{n,k}(x, y) t^n \ = \ \sum_{\alpha} L^B_{\alpha,
      \varnothing} (x, y) t^{|\alpha|} $$
where the sum ranges over all integer partitions $\alpha$ such
that every part of $\alpha$ divides $k$. The definition of
$L^B_{\alpha, \beta} (x, y)$ in terms of multisets of necklaces
and that of plethysm of symmetric functions in turn imply that
  \begin{equation} \label{eq:Ggen}
    \sum_{n \ge 0} G_{n,k}(x, y) t^n \ = \ \prod_{d \, | \, k}
    h[L^B_d (tx, ty)]
  \end{equation}
where $h(x) = \sum_{n \ge 0} h_n(x)$ is the formal sum of all
complete homogeneous symmetric functions $h_n(x)$. The basic
formula $\log h(x) = \log \prod_{i \ge 1} 1/(1-x_i) = \sum_{n \ge
1} p_n (x)/n$ and (\ref{eq:Ggen}) then imply that
  \begin{equation} \label{eq:Ggen2}
    \sum_{n \ge 0} G_{n,k}(x, y) t^n \ = \ \exp \, \sum_{d \, | \, k}
    \sum_{n \ge 1} \, \frac{1}{n} L^B_d (x^n, y^n) \, t^{nd}.
\end{equation}
Comparing (\ref{eq:rgen}) to (\ref{eq:Ggen2}) and using
Lemma~\ref{lem:gen} gives $\ch(r^B_{n, k}) = G_{n,k}(x, y)$ for
all $n\ge 0$ and the proof follows. 
\end{proof}

\section{Arc permutations}
\label{sec:arc}

Arc permutations, originally introduced in the study of
triangulations~\cite{AR12}, have interesting combinatorial
properties. For instance, they can be characterized by pattern
avoidance, they carry interesting graph and poset structures, as
well as an affine Weyl group action, and afford well factorized
unsigned and signed enumeration formulas~\cite{ER14}. Two type $B$
extensions were introduced in~\cite{ERxx}. This section reviews
the relevant definitions and shows that one of these extensions is
a fine set for an $\mathfrak{S}_n$-character, while the other is a
fine set for a $B_n$-character.

A permutation $w \in \mathfrak{S}_n$ is said to be an \emph{arc
permutation} if, for every $i \in [n]$, the set $\{w(1), w(2),\dots,w(i)\}$ is an interval in
$\ZZ_n$ (where the letter $n$ is identified with zero). For example, $(2, 1, 5, 3, 4)$ is an arc permutation
in $\mathfrak{S}_5$ but $(2, 1, 5, 6, 3, 4)$ is not an arc
permutation in $\mathfrak{S}_6$. The set of arc permutations in
$\mathfrak{S}_n$ is denoted by $\aA_n$.

As will be explained in the sequel, the following theorem can be
deduced from results of Elizalde and Roichman~\cite{ER14}. This
theorem will be extended to type $B$ in this section. Let
$V_{n-1}$ be an $(n-1)$-dimensional vector space over $\CC$, on
which $\mathfrak{S}_{n-1}$ acts by permuting coordinates.
\begin{theorem} \label{thm:arcA}
For $n \ge 2$, the set $\aA_n$ is fine for the character of the
induced representation $\bigwedge V_{n-1}
\uparrow^{\mathfrak{S}_n}$ of the exterior algebra of $V_{n-1}$
from $\mathfrak{S}_{n-1}$ to $\mathfrak{S}_n$.

Equivalently, $\sum_{w \in \aA_n} F_{n, \Des(w)}(x) = \ch(\chi)$
where
\begin{equation*}
  \chi = \sum\limits_{k=1}^{n-1} \chi^{(k, 1^{n-1-k})}
  \uparrow_{\mathfrak{S}_{n-1}}^{\mathfrak{S}_n} 
  = \chi^{(n)} \, + \, \chi^{(1^n)} \, + \,
      2 \sum\limits_{k=2}^{n-1} \chi^{(k, 1^{n-k})} \, + \,
      \sum\limits_{k=2}^{n-2} \chi^{(k, 2, 1^{n-k-2})}.
\end{equation*}
\end{theorem}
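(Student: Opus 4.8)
The plan is to prove Theorem~\ref{thm:arcA} by combining a character computation of Elizalde and Roichman~\cite{ER14} with Theorem~\ref{thm:mainA}. First I would recall from \cite{ER14} the explicit description of the quasisymmetric generating function $\sum_{w \in \aA_n} F_{n, \Des(w)}(x)$. The key structural fact about arc permutations is that the descent set of $w \in \aA_n$ is essentially forced by the ``rotation data'': writing an arc permutation in terms of the sequence of left/right extensions of the growing interval $\{w(1), \dots, w(i)\}$ in $\ZZ_n$, one reads off $\Des(w)$ directly, and one checks that the multiset $\{\Des(w) : w \in \aA_n\}$ is a union of descent-set multisets of hook and near-hook shapes. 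Concretely, I would verify that
\[
\sum_{w \in \aA_n} F_{n, \Des(w)}(x) \ = \ s_{(n)}(x) + s_{(1^n)}(x) + 2 \sum_{k=2}^{n-1} s_{(k, 1^{n-k})}(x) + \sum_{k=2}^{n-2} s_{(k, 2, 1^{n-k-2})}(x),
\]
either by quoting the relevant computation in \cite{ER14} or by a direct bijective/generating-function argument grouping arc permutations according to the position of the ``turning points'' of the interval growth.

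Next I would identify the right-hand side above as the Frobenius characteristic of the claimed induced character. By Pieri's rule (or the branching rule for $\mathfrak{S}_{n-1} \subset \mathfrak{S}_n$), inducing $\chi^{(k,1^{n-1-k})}$ from $\mathfrak{S}_{n-1}$ to $\mathfrak{S}_n$ adds one box to the hook $(k, 1^{n-1-k})$ in all possible ways, yielding $\chi^{(k+1, 1^{n-1-k})} + \chi^{(k, 2, 1^{n-2-k})} + \chi^{(k, 1^{n-k})}$ (omitting terms that are not partitions). Summing over $k = 1, \dots, n-1$ and collecting terms, the hooks $(k, 1^{n-k})$ for $2 \le k \le n-1$ each appear with multiplicity $2$, the extreme shapes $(n)$ and $(1^n)$ appear once, and the two-column-ish shapes $(k, 2, 1^{n-k-2})$ for $2 \le k \le n-2$ appear once; this is exactly the stated decomposition of $\chi$. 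Since $\bigwedge V_{n-1} = \bigoplus_{j=0}^{n-1} \bigwedge^j V_{n-1}$ and the character of $\bigwedge^j V_{n-1}$ as an $\mathfrak{S}_{n-1}$-module is $\chi^{(n-j, 1^{j-1})}$ for $1 \le j \le n-1$ together with the trivial character $\chi^{(n-1)}$ for $j = 0$ — this is the standard decomposition of exterior powers of the (reducible) permutation module, and it is precisely the content reindexed as $\sum_{k=1}^{n-1}\chi^{(k,1^{n-1-k})}$ after absorbing the $j=0$ term — the induced character is the $\chi$ in the statement.

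Finally, having shown $\sum_{w \in \aA_n} F_{n, \Des(w)}(x) = \ch(\chi)$, I would invoke Theorem~\ref{thm:mainA} directly: it asserts that $\aA_n$, endowed with the usual descent map, is a fine set for $\chi$ if and only if this identity of (quasi)symmetric functions holds. This immediately gives the theorem, including the character-theoretic and the generating-function formulations simultaneously, and the equidistribution consequences follow from Corollary~\ref{cor:mainA}. The main obstacle is the first step: establishing the precise $F$-expansion for $\aA_n$. One must be careful that arc permutations do not biject naively onto standard Young tableaux of the listed shapes — rather it is the \emph{distribution} of $\Des$ that matches — so the argument either reuses the signed/unsigned enumeration machinery of \cite{ER14} (where the relevant descent-set statistics over $\aA_n$ were computed) or proceeds by a careful case analysis of how the interval $\{w(1), \dots, w(i)\}$ can grow, tracking exactly when a descent occurs. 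I would structure the write-up to isolate this combinatorial lemma, cite \cite{ER14} for it where possible, and keep the representation-theoretic bookkeeping (Pieri, exterior powers) as short routine verification.
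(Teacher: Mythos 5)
Your proposal follows essentially the same route as the paper: the combinatorial core (the Schur/hook expansion of $\sum_{w\in\aA_n}F_{n,\Des(w)}(x)$) is imported from \cite{ER14}, the two descriptions of $\chi$ are reconciled by the branching (Pieri) rule, and the passage from the $F$-identity to ``fine set'' is exactly Theorem~\ref{thm:mainA}. The only organizational difference is that the paper splits $\aA_n$ into $\lL_n$, $\rR_n$ and the remainder, handling the two unimodal pieces via Proposition~\ref{prop:unimodal} (with the two-element overlap $\lL_n\cap\rR_n$ contributing $s_{(n)}(x)+s_{(1^n)}(x)$) and citing \cite[Theorem~5]{ER14} only for $\aA_n\sm(\lL_n\cup\rR_n)$; this is cosmetic. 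Your Pieri bookkeeping for $\sum_{k=1}^{n-1}\chi^{(k,1^{n-1-k})}\uparrow_{\mathfrak{S}_{n-1}}^{\mathfrak{S}_n}$ is correct.

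One detail to repair: your decomposition of the exterior powers is not right as stated. If $V_{n-1}$ is the full $(n-1)$-dimensional permutation module of $\mathfrak{S}_{n-1}$, then writing $W$ for the standard $(n-2)$-dimensional constituent one has $\bigwedge^j V_{n-1}\cong\bigwedge^j W\oplus\bigwedge^{j-1}W$, with character $\chi^{(n-1-j,1^{j})}+\chi^{(n-j,1^{j-1})}$ --- two hook constituents, not the single hook you wrote --- so $\bigwedge V_{n-1}$ contains each hook of $n-1$ \emph{twice} and has dimension $2^{n-1}$, whereas $\sum_{k=1}^{n-1}\chi^{(k,1^{n-1-k})}$ has dimension $2^{n-2}=|\aA_n|/n$. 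The clean identity is $\bigwedge W=\sum_{k=1}^{n-1}\chi^{(k,1^{n-1-k})}$ with $W$ the standard representation; the paper sidesteps this bookkeeping entirely by deferring the equivalence of the two formulations to the proof of \cite[Theorem~6]{ER14}, and you should either do the same or carry out the exterior-power computation correctly rather than asserting irreducibility of $\bigwedge^j V_{n-1}$.
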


Two type $B$ extensions of the concept of an arc permutation,
suggested in~\cite{ERxx}, can be described as follows. Let us
identify $\Omega_n$ with $\ZZ_{2n}$ by associating $\bar{i} \in
\Omega_n$ with $n + i \in\ZZ_{2n}$ (and thus $\bar{n} \in
\Omega_n$ with $0\in\ZZ_{2n}$) for every $i \in
[n]$. A signed permutation $w \in B_n$ is said to be a
\emph{$B$-arc permutation} if $\{w(i), w(i+1),\dots,w(n)\}$ is an
interval in $\ZZ_{2n}$ for every $i \in [n]$. For instance,
$(\bar{2}, 3, \bar{1}, 5, 4)$ is a $B$-arc permutation in $B_5$.
The set of $B$-arc permutations in $B_n$ will be denoted by
$\AB_n$. Note that $|\AB_n|=n2^n$.

We also say that $w \in B_n$ is a \emph{signed arc
permutation} if, for each $i \in \{2,\dots,n-1\}$,
\begin{itemize}
\item the set $\{|w(1)|,\dots,|w(i-1)|\}$ is an interval in
$\ZZ_n$; and \item $w(i)$ is unbarred if $|w(i)|-1 \in
\{|w(1)|,\dots,|w(i-1)|\}$ and barred if $|w(i)| + 1 \in
\{|w(1)|,\dots,|w(i-1)|\}$ (with addition in $\ZZ_n$).
\end{itemize}
For instance, $(\bar{3}, \bar{2}, 4, 1)$ is a signed arc
permutation in $B_4$. The set of signed arc permutations in $B_n$
will be denoted by $\aA^s_n$. Since there is no restriction on the
signs of $w(1)$ and $w(n)$, the number of signed arc permutations
in $B_n$ satisfies $|\aA^s_n| = 4|\aA_n| = n2^n$ for $n \ge 2$.

\medskip
The main results of this section state that $\AB_n$ and $\aA^s_n$
are fine sets for characters of $\mathfrak{S}_n$ and $B_n$,
respectively, which are described explicitly. We recall 
from Section~\ref{subsec:perm} that the descent set of $w \in B_n$ is defined as
$\Des(w) = \{ i \in [n-1]: w(i) >_r w(i+1)\}$, and that its signed descent set
$\cDes(w)$ is given by Definition~\ref{cDes}. The group $B_{n-1}$
acts on the vector space $V_{n-1}$ by permuting coordinates and switching their signs.

\begin{theorem} \label{thm:B-arc}
For every $n \ge 2$,
\[ \sum\limits_{w \in \AB_n} F_{n, \Des(w)} (x) \ = \
\ch \left( \, 2 \sum\limits_{k=1}^{n-1} \chi^{(k,1^{n-k-1})}
\uparrow^{\mathfrak{S}_n}_{\mathfrak{S}_{n-1}} \, + \ n
\sum\limits_{k=1}^{n} \chi^{(k,1^{n-k})} \right). \]
\end{theorem}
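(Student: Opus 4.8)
The plan is to prove the stated identity of (quasi)symmetric functions by evaluating $\sum_{w\in\AB_n}F_{n,\Des(w)}(x)$ and matching it to the right‑hand side. A first reduction: applying $\ch$ and Pieri's rule exactly as in Theorem~\ref{thm:arcA} rewrites the right‑hand side as $n\,G_n(x)+2\,s_1(x)\,G_{n-1}(x)$, where $G_m(x):=\sum_{k=1}^{m}s_{(k,1^{m-k})}(x)=\ch\!\big(\bigoplus_{k=1}^{m}\chi^{(k,1^{m-k})}\big)$. I then record two reformulations. First, $G_m(x)=\sum_{\sigma}F_{m,\Des(\sigma)}(x)$, summed over the set $\mathrm{SI}_m$ of permutations of $[m]$ whose every suffix $\{\sigma(i),\dots,\sigma(m)\}$ is an interval of $\{1,\dots,m\}$: for each $S\subseteq[m-1]$ there is exactly one $\sigma\in\mathrm{SI}_m$ with $\Des(\sigma)=S$ (build $\sigma$ from the right, making the step at position $i$ extend the current interval upward precisely for $i\in S$) and exactly one standard Young tableau of hook shape and size $m$ with descent set $S$, so the identity follows from Proposition~\ref{prop:sFexpansion}. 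Second, $s_1(x)\,G_{n-1}(x)=\sum_{\sigma}F_{n,\Des(\sigma)}(x)$ summed over the permutations of $[n]$ whose every suffix is a cyclic interval of $\ZZ_n$: that set coincides with $\aA_n$, because an arc permutation has its suffixes equal to the complements of its prefixes, hence again cyclic intervals, so $\aA_n$ is closed under reversal of one‑line notation, and then Theorem~\ref{thm:arcA} (whose character has characteristic $s_1(x)G_{n-1}(x)$) applies.

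Next I stratify $\AB_n$ by the set of values a permutation takes. Identifying $\Omega_n$ with $\ZZ_{2n}$ as in the text, the value set $\{w(1),\dots,w(n)\}$ of a $B$-arc permutation is one of the $2n$ cyclic intervals $I$ of length $n$ in $\ZZ_{2n}$, and for fixed $I$ the $B$-arc permutations with that value set are exactly the orderings of $I$ all of whose suffixes are connected sub‑arcs of $I$ (built up from a single element by repeatedly adjoining an endpoint). Standardizing the entries of such a $w$ by their ranks in $(I,<_r)$ produces a permutation of $[n]$ with the same descent set. Now $<_r$ is the linear order obtained from the cycle $\ZZ_{2n}$ by cutting between its $<_r$-maximum $n$ and its $<_r$-minimum $n+1$. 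If $I$ avoids that cut — there are $n+1$ such $I$ — then $<_r$ restricted to $I$ is the path order of $I$, the sub‑arcs of $I$ become the intervals of $[n]$, and standardization bijects these $w$ with $\mathrm{SI}_n$; together these $I$ contribute $(n+1)\,G_n(x)$. If $I$ contains the cut — there are $n-1$ such $I$, indexed by a parameter $c\in\{2,\dots,n\}$ — then standardization carries the path $I$ to the rotated path $P_c=(c,c+1,\dots,n,1,2,\dots,c-1)$ on vertex set $[n]$, and the contribution is $H_c(x):=\sum_{\sigma}F_{n,\Des(\sigma)}(x)$, summed over permutations $\sigma$ of $[n]$ whose suffixes are connected sub‑arcs of $P_c$. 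Writing $H_1(x):=G_n(x)$ (with $P_1$ the straight path), this gives $\sum_{w\in\AB_n}F_{n,\Des(w)}(x)=n\,G_n(x)+\sum_{c=1}^{n}H_c(x)$.

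The crux is then $\sum_{c=1}^{n}H_c(x)=2\,s_1(x)\,G_{n-1}(x)$, which I would prove by a counting argument over $\sigma\in\mathfrak S_n$. A permutation $\sigma$ has all its suffixes being connected sub‑arcs of $P_c$ for some $c$ if and only if all its suffixes are cyclic intervals of $\ZZ_n$; and when this holds it holds for exactly two values of $c$, namely $c=\sigma(1)$ and $c=\sigma(1)+1$ (these are precisely the $P_c$ having $\sigma(1)$ as an endpoint, and deleting the last‑placed vertex $\sigma(1)$ from either of them leaves the same path, on which the remaining suffixes of $\sigma$ impose identical constraints). Hence $\sum_{c=1}^{n}H_c(x)=2\sum_{\sigma}F_{n,\Des(\sigma)}(x)$, summed over permutations with all suffixes cyclic intervals, which by the second reformulation equals $2\,s_1(x)\,G_{n-1}(x)$. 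Combining this with the previous paragraph yields $\sum_{w\in\AB_n}F_{n,\Des(w)}(x)=n\,G_n(x)+2\,s_1(x)\,G_{n-1}(x)$, the right‑hand side of the theorem; in particular the left‑hand side is symmetric and Schur‑positive, so $\AB_n$ is fine for the stated $\mathfrak S_n$-character.

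The step I expect to be most delicate is the translation in the second paragraph: pinning down exactly how $<_r$ restricts to an interval $I$ that straddles the cut of $\ZZ_{2n}$, so that the $<_r$-descent set of a $B$-arc permutation is transported faithfully to the ordinary descent set of a permutation of $[n]$ whose suffixes are sub‑arcs of some $P_c$. Keeping the cyclic adjacency that defines $\AB_n$ separate from the linear order $<_r$ that defines descents, and counting the two families of intervals ($n+1$ avoiding and $n-1$ containing the cut) correctly, is where the argument is most error‑prone; once that is in place, the double‑counting identity for $\sum_c H_c$ and the appeal to Theorem~\ref{thm:arcA} are short.
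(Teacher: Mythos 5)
Your argument is correct, and its overall strategy is the same as the paper's: decompose $\AB_n$ so that, descent-set-wise, it amounts to $n$ copies of the right-unimodal permutations $\rR_n$ (contributing $n\sum_{k}s_{(k,1^{n-k})}(x)$, as in Proposition~\ref{prop:unimodal}) plus two copies of the arc permutations $\aA_n$ (contributing twice the induced character, via Theorem~\ref{thm:arcA}). The execution of the second half, however, is genuinely different. The paper takes $\wAB_n=\{w\in\AB_n:\ w(i)=1 \text{ for some } i\}$, splits it into $n$ blocks each in descent-preserving bijection with $\rR_n$, and maps the complement two-to-one onto $\aA_n$ by an explicit bar-forgetting map whose two preimages are listed case by case. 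You instead stratify by the value set $I$ (one of the $2n$ cyclic intervals of length $n$ in $\ZZ_{2n}$): the $n+1$ intervals compatible with the linear order $<_r$ standardize to $\rR_n$, while the $n-1$ intervals straddling the cut between $n$ and $n+1$ yield generating functions $H_c$ attached to rotated Hamiltonian paths of the $n$-cycle; adding and subtracting $H_1=G_n$ and double-counting (each $\sigma\in\aA_n$ lies in exactly the two $H_c$ with $c\in\{\sigma(1),\sigma(1)+1\}$, since those are the paths having $\sigma(1)$ as an endpoint) gives $\sum_{c=1}^{n}H_c=2\sum_{\sigma\in\aA_n}F_{n,\Des(\sigma)}(x)$. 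The two partitions even differ by one block — you group the all-barred value set with the $\rR_n$ side, the paper groups it with the $\aA_n$ side — and the bookkeeping $(n+1)G_n-G_n+2\,s_1G_{n-1}$ versus $nG_n+2\,s_1G_{n-1}$ is reassuringly consistent. What your route buys is that the two-to-one phenomenon falls out of a clean endpoint count rather than an explicit description of preimages; what it costs is the extra care (which you correctly identify and exercise) in transporting $<_r$-descents through standardization when $I$ straddles the cut.
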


\begin{theorem} \label{thm:sign-arc}
For every $n \ge 2$, the set $\aA^s_n$ is fine for the
$B_n$-character induced from the exterior algebra $\bigwedge
V_{n-1}$. Equivalently,
\[ \sum\limits_{w \in \aA^s_n} F_{\cDes(w)} (x, y) \ = \ \ch \left( \,
   \sum\limits_{k=0}^{n-1} \chi^{(k),(1^{n-k-1})}
   \uparrow_{B_{n-1}}^{B_n} \right). \]
\end{theorem}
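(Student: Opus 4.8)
The plan is to deduce the theorem from Theorem~\ref{thm:main}. By that theorem, a set $\bB$ equipped with a map $\cDes:\bB\to\Sigma^B(n)$ is fine for a $B_n$-character $\chi$ precisely when $\ch(\chi)=\sum_{b\in\bB}F_{\cDes(b)}(x,y)$; and the exterior algebra $\bigwedge V_{n-1}$ affords the $B_{n-1}$-character $\sum_{k=0}^{n-1}\chi^{(k),(1^{n-k-1})}$, by the standard decomposition of the exterior powers of the defining representation of $B_{n-1}$ (in which $\bigwedge^{\,n-1-k}V_{n-1}\cong\chi^{(k),(1^{n-k-1})}$). Hence the two forms of the statement are equivalent, and it suffices to prove the displayed identity in the statement of the theorem.

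For its right-hand side, recall that $\ch(\chi^{(k),(1^{n-k-1})})=s_{(k)}(x)\,s_{(1^{n-k-1})}(y)=h_k(x)\,e_{n-1-k}(y)$. The embedding $B_{n-1}\hookrightarrow B_n$ factors through $B_{n-1}\times B_1$, under which $1_{B_{n-1}}$ induces to $1_{B_{n-1}}\otimes(1_{B_1}\oplus\chi^{(\varnothing,(1))})$; so, by the multiplicativity of $\ch$ under induction from subgroups of $B_n$ of the form $B_m\times B_{n-m}$, one has $\ch(\chi\!\uparrow^{B_n}_{B_{n-1}})=(s_{(1)}(x)+s_{(1)}(y))\,\ch(\chi)$ for every $B_{n-1}$-character $\chi$. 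Thus the right-hand side equals
\[
\bigl(s_{(1)}(x)+s_{(1)}(y)\bigr)\sum_{k=0}^{n-1}h_k(x)\,e_{n-1-k}(y),
\]
which, by Pieri's rule, expands as a nonnegative combination $\sum_{(\lambda,\mu)}c_{\lambda,\mu}\,s_\lambda(x)\,s_\mu(y)$ supported on pairs of one-row and one-column shapes; this last form is convenient if one prefers to match standard Young bitableaux through Proposition~\ref{prop:schurprod} and condition~(iii) of Corollary~\ref{cor:main}.

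It remains to compute the left-hand side, for which I would first make the structure of $\aA^s_n$ explicit. For $n\ge 2$ the map $w\mapsto\bigl(|w|,\veps_w(1),\veps_w(n)\bigr)$ is a bijection from $\aA^s_n$ onto $\aA_n\times\{-,+\}^2$: the defining conditions force $\veps_w(i)$ for $2\le i\le n-1$ in terms of $|w|$, with $\veps_w(i)=+$ exactly when $|w(i)|$ extends the arc $\{|w(1)|,\dots,|w(i-1)|\}$ at its ``right'' end and $\veps_w(i)=-$ when it extends it at its ``left'' end --- an unambiguous dichotomy, since a proper arc of $\ZZ_n$ of size at most $n-2$ never contains both $\ZZ_n$-neighbours of a point outside it --- while $\veps_w(1)$ and $\veps_w(n)$ remain free; equivalently, a signed arc permutation is encoded by a starting value $a=|w(1)|\in[n]$, a left/right word of length $n-2$ (which is the restriction of $\veps_w$ to $\{2,\dots,n-1\}$), and the two free signs. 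The device for taming the cyclic wrap-around is the observation that $|w(n)|$ lies outside $\{|w(1)|,\dots,|w(i)|\}$ for every $i\le n-1$: cutting $\ZZ_n$ open at $c:=|w(n)|$ straightens each arc permutation to a linear one, and inside the resulting path the usual order of $[n]$ ``wraps'' at exactly one place, and nowhere when $c\in\{1,n\}$. Conditioning on $c$ and on the location of this wrap, I would read off, for each $j\in[n-1]$, whether $j\in\Des(w)=\wDes(\cDes(w))$ as an explicit condition on $\veps_w$ and the data above; substituting into the definition~\eqref{eq:defF(xy)} of $F_{\cDes(w)}(x,y)$ and summing over all $w$ in each such class --- in which the left/right word and the two free signs vary freely, so $\veps_w$ ranges over a controlled subset of $\{-,+\}^n$ --- makes each partial sum a product of complete and elementary symmetric functions in $x$ and $y$, and adding these over all classes collapses to $\bigl(s_{(1)}(x)+s_{(1)}(y)\bigr)\sum_{k=0}^{n-1}h_k(x)\,e_{n-1-k}(y)$. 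The case $n=2$, where $\aA^s_2=B_2$ and the identity reads $\sum_{w\in B_2}F_{\cDes(w)}(x,y)=\bigl(s_{(1)}(x)+s_{(1)}(y)\bigr)^2=\ch(\CC[B_2])$, is immediate and serves as a check.

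The main obstacle, I expect, is exactly this last piece of bookkeeping: the single wrap-around of the order of $[n]$ along the straightened arc, the forced interior signs, and the two free signs at the extreme positions all interact, each affecting the smallest and largest elements of $\Des(w)$ in a case-dependent way, so that the partial sums are not themselves symmetric and the symmetry reappears only after the contributions of all classes are combined. Organizing the case analysis so that these cancellations become transparent --- most plausibly by extending to the signed setting the analysis of the descent set over arc permutations in~\cite{ER14} --- is the heart of the matter; by comparison, the reduction via Theorem~\ref{thm:main} and the evaluation of the right-hand side are routine.
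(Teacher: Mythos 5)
Your reduction of the theorem to the identity of Theorem~\ref{thm:main}, the identification $\bigwedge^{n-1-k}V_{n-1}\cong\chi^{(k),(1^{n-1-k})}$, the formula $\ch(\chi\!\uparrow_{B_{n-1}}^{B_n})=\bigl(s_{(1)}(x)+s_{(1)}(y)\bigr)\ch(\chi)$, and hence the evaluation of the right-hand side as $\bigl(s_{(1)}(x)+s_{(1)}(y)\bigr)\sum_{k=0}^{n-1}h_k(x)e_{n-1-k}(y)$ are all correct, as is the structural observation that $w\mapsto\bigl(|w|,\veps_w(1),\veps_w(n)\bigr)$ is a bijection from $\aA^s_n$ onto $\aA_n\times\{-,+\}^2$ (this is exactly why $|\aA^s_n|=4|\aA_n|=n2^n$). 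The problem is that the left-hand side is never computed. The entire content of the theorem is the evaluation of $\sum_{w\in\aA^s_n}F_{\cDes(w)}(x,y)$, and your proposal replaces it with a plan --- cut $\ZZ_n$ open at $|w(n)|$, condition on classes, read off descents, sum --- whose execution you yourself flag as ``the heart of the matter.'' As written there is no specification of the classes, no verification that within a class the pairs $(\veps_w,\Des(w))$ range over exactly the sets needed for the sum of the $F$'s to telescope to a product of complete and elementary symmetric functions (recall that $F_{\cDes(w)}$ depends on the full sign vector $\veps_w$ and not only on $\Des(w)=\wDes(\cDes(w))$, and that flipping the free signs $\veps_w(1)$, $\veps_w(n)$ changes both the variable type in positions $1$ and $n$ and the membership of $1$ and $n-1$ in $\Des(w)$), and no argument that the class-by-class products recombine into the stated Schur expansion. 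This is a genuine gap, not deferred bookkeeping.

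For comparison, the paper sidesteps this computation entirely by working on the other side of Proposition~\ref{prop:schurprod}: since $\cDes(w)=\cDes(Q^B(w))$, it suffices to show that the multiset $\{Q^B(w):w\in\aA^s_n\}$ consists of every standard Young bitableau of a prescribed multiset of one-row/one-column and near-hook shapes, with multiplicity one or two according to the shape. This is achieved by an explicit decomposition of $\aA^s_n$ into shuffle classes (the sets $\aA_{n,k}$ of shuffles of $(\bar k,\dots,\bar 1)$ with $(k+1,\dots,n)$, their translates by the longest element $w_\circ$, and further classes $\bB_{n,k}$ built from cyclic shifts of shuffles $\cC_{n,k}$), on each of which the insertion bitableau $P^B$ is constant in shape and $Q^B$ is easily identified. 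If you wish to salvage your direct approach, the realistic route is to produce some such explicit decomposition and sum $F_{\cDes(w)}$ class by class --- the hook-shaped products $h_a(x)e_b(y)$ and $s_{(k,1)}(x)s_{(1^{n-k-1})}(y)$ do emerge cleanly over shuffle classes --- but without that decomposition the argument is incomplete at its central step.
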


\begin{example}
We have $$\AB_2 = \aA^s_2 = \{(1,2), (\bar1,2), (1,\bar2), 
(\bar1,\bar2), (2,1), (\bar2,1), (2,\bar1), (\bar2,\bar1)\}.$$ Thus, for
$n=2$, Theorem~\ref{thm:B-arc} states that 
\begin{eqnarray*}4F_{2,\varnothing}(x) \, + \, 4F_{2,\{1\}} (x)  &=& 4s_{(2)}(x)
\, + \, 4s_{(1,1)}(x) \\ &=&  \ch\left(2\chi^{(1)}\uparrow^{\mathfrak{S}_2}_{\mathfrak{S}_{1}} \, + 
\, 2(\chi^{(1,1)}+\chi^{(2)})\right),\end{eqnarray*}
and Theorem~\ref{thm:sign-arc} states that
\begin{multline*}F_{(2)}(x,y) \, + \, 2F_{(1,\bar1)}(x,y) \, + \, 2F_{(\bar1,1)}(x,y) \, + \, F_{(\bar2)}(x,y) \, + \, F_{(1,1)}(x,y) \, + \, F_{(\bar1,\bar1)}(x,y)\\
= \ s_{(2)}(x) \, + \, s_{(2)}(y) \, + \, s_{(1,1)}(x) \, + \, s_{(1,1)}(y) \, + \, 2s_{(1)}(x)s_{(1)}(y) \\ = 
\ \ch\left( \left( \chi^{\varnothing,(1)} \, + \, \chi^{(1),\varnothing}\right) \uparrow_{B_{1}}^{B_2}\right),\end{multline*}
where we have indexed the functions $F_\sigma(x,y)$ by 
the corresponding signed descent compositions.
\end{example}

For the proof of Theorems~\ref{thm:arcA} and~\ref{thm:B-arc} we
need one more definition. A permutation $w \in \mathfrak{S}_n$ is
called \emph{left-unimodal} (respectively, \emph{right-unimodal})
if the set $\{w(1), w(2),\dots,w(n)\}$ (respectively, $\{w(i),
w(i+1),\dots,w(n)\}$) is an interval in $\ZZ$ for every $i \in
[n]$. Clearly, all left-unimodal and all right-unimodal
permutations are arc permutations. The set of left-unimodal
permutations in $\mathfrak{S}_n$ will be denoted by $\lL_n$, and
that of right-unimodal permutations by $\rR_n$.

\begin{proposition} \label{prop:unimodal}
For $n \ge 1$,
\[ \sum\limits_{w \in \lL_n} F_{n, \Des(w)} (x) \ = \
   \sum\limits_{w \in \rR_n} F_{n, \Des(w)} (x) \ = \
   \ch \left( \, \sum\limits_{k=1}^{n} \chi^{(k,1^{n-k})} \right). \]
\end{proposition}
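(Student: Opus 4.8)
The plan is to prove the two equalities simultaneously by showing that the descent map is a \emph{bijection} from each of the three sets $\lL_n$, $\rR_n$, and the disjoint union $\bigsqcup_{k=1}^n \SYT(k,1^{n-k})$ onto the full Boolean lattice $2^{[n-1]}$. Granting this, Proposition~\ref{prop:sFexpansion} gives $\ch\bigl(\sum_{k=1}^n \chi^{(k,1^{n-k})}\bigr) = \sum_{k=1}^n s_{(k,1^{n-k})}(x) = \sum_{k=1}^n \sum_{Q \in \SYT(k,1^{n-k})} F_{n,\Des(Q)}(x) = \sum_{S \subseteq [n-1]} F_{n,S}(x)$, and the same sum $\sum_{S \subseteq [n-1]} F_{n,S}(x)$ is produced by $\lL_n$ and by $\rR_n$; alternatively one may phrase the conclusion through Theorem~\ref{thm:mainA}.

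First I would analyze $\lL_n$ recursively. If $w \in \lL_n$, then since $\{w(1),\dots,w(i+1)\}$ must be an interval containing the interval $\{w(1),\dots,w(i)\}$, the value $w(i+1)$ equals either $\min\{w(1),\dots,w(i)\}-1$ or $\max\{w(1),\dots,w(i)\}+1$. Recording, for each $i \in [n-1]$, the binary choice ``extend left'' versus ``extend right'' made in placing $w(i+1)$ gives a word in $\{L,R\}^{n-1}$, and conversely every such word determines a unique element of $\lL_n$, the value $w(1)$ being forced to be one more than the number of $L$'s (this forcing is the point that upgrades a surjection onto $\{L,R\}^{n-1}$ to a bijection). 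Since extending left at step $i+1$ means $w(i+1) = \min - 1 < w(i)$, i.e. $i \in \Des(w)$, while extending right means $w(i+1) > w(i)$, i.e. $i \notin \Des(w)$, the map $\Des\colon \lL_n \to 2^{[n-1]}$ is a bijection. The case of $\rR_n$ is identical with prefixes replaced by suffixes: one builds $\{w(i),\dots,w(n)\}$ as $i$ runs from $n$ down to $1$, the value $w(n)$ is forced, and now the choice $w(i) = \max\{w(i+1),\dots,w(n)\}+1$ is the one that records $i \in \Des(w)$.

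Next I would treat the hook tableaux. A standard Young tableau $Q$ of hook shape $(k,1^{n-k})$ has $1$ in the corner and is determined by the subset of $\{2,\dots,n\}$ placed (in increasing order) in the arm cells $(1,2),\dots,(1,k)$, the complementary entries filling the leg cells $(2,1),\dots,(n-k+1,1)$ in increasing order; hence $\bigsqcup_{k=1}^n \SYT(k,1^{n-k})$ is in bijection with $2^{\{2,\dots,n\}}$ via the arm-entry set (and has $2^{n-1}$ elements, as it must). A short case check on rows shows $i \in \Des(Q)$ if and only if $i+1$ is a leg entry: if $i+1$ is a leg entry it sits strictly below $i$ regardless of whether $i$ is in the corner, arm, or leg, whereas if $i+1$ is the corner or an arm entry it lies in row $1$, weakly above $i$. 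Thus $\Des(Q) = \{i \in [n-1] : i+1 \text{ is a leg entry of } Q\}$, and as the leg-entry set ranges over all of $2^{\{2,\dots,n\}}$ this realizes every subset of $[n-1]$ exactly once.

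Combining the three bijections, the distribution of $\Des$ over $\lL_n$, over $\rR_n$, and over $\bigsqcup_{k=1}^n \SYT(k,1^{n-k})$ is in each case the uniform one assigning multiplicity one to each subset of $[n-1]$; feeding this into $\sum_b F_{n,\Des(b)}(x)$ and invoking Proposition~\ref{prop:sFexpansion} yields the asserted common value. I expect no genuine obstacle here: the only mildly delicate points are the forcing of $w(1)$ (resp.\ $w(n)$) in the unimodal recursions, which is what makes those maps bijective rather than merely surjective, and the row comparison for descents in hook tableaux.
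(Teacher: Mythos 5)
Your proof is correct and follows the same route as the paper's: both reduce the claim to the fact that $\Des$ hits each subset of $[n-1]$ exactly once on $\lL_n$, on $\rR_n$, and on $\bigsqcup_{k=1}^n \SYT(k,1^{n-k})$, and then apply Proposition~\ref{prop:sFexpansion}. The only difference is that the paper cites~\cite{ER14} for the unimodal bijections, whereas you supply the prefix/suffix interval-extension argument and the hook-tableau descent computation in full, and these details are all accurate.
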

\begin{proof}
As explained in the proofs of Propositions 4 and 6 in \cite{ER14},
from the definition of left and right unimodality we get
\[ \sum\limits_{w \in \lL_n} F_{n, \Des(w)} (x) \ = \
   \sum\limits_{w \in \rR_n} F_{n, \Des(w)} (x) \ = \
   \sum_{S \subseteq [n-1]} F_{n, S} (x). \]
A simple application of Proposition~\ref{prop:sFexpansion} shows
that
\[ \sum_{S \subseteq [n-1]} F_{n, S} (x) \ = \ \sum\limits_{k=1}^{n}
   s_{(k,1^{n-k})} (x) \]
and the proof follows.
\end{proof}

\begin{proof}[Proof of Theorem~\ref{thm:arcA}] We may assume
that $n \ge 4$. Then \cite[Theorem~5]{ER14} may be rephrased as
stating that
\[ \sum\limits_{w \in \aA_n \setminus (\lL_n \cup \rR_n)} F_{n, \Des(w)} (x) \ = \ \ch \left( \, \sum\limits_{k=2}^{n-2} \chi^{(k,2,1^{n-k-2})}
\right). \]
The second statement in the theorem follows from this
equation, Proposition~\ref{prop:unimodal} and the fact that $\lL_n
\cap \rR_n$ consists of the identity permutation and its reverse.
The equivalence to the first statement is explained in the proof
of \cite[Theorem~6]{ER14}. 
\end{proof}

\begin{proof}[Proof of Theorem~\ref{thm:B-arc}] The proposed
equation may be rewritten as
\begin{multline*}
\sum\limits_{w \in \AB_n} F_{n, \Des(w)} (x) \\ = \ \ch \left(
(n+2) (\chi^{(n)} \, + \, \chi^{(1^n)}) \, + \, (n+4)
\sum\limits_{k=2}^{n-1} \chi^{(k, 1^{n-k})} \, + \, 2
\sum\limits_{k=2}^{n-2} \chi^{(k, 2, 1^{n-k-2})} \right).
\end{multline*}

To prove this equation, and in view of Theorem~\ref{thm:arcA} and
Proposition~\ref{prop:unimodal}, it suffices to find a subset
$\wAB_n$ of $\AB_n$ for which there exist an $n$-to-one descent
preserving map from $\wAB_n$ to the set $\rR_n$ of right-unimodal
permutations and a two-to-one descent preserving map from $\AB_n
\sm \wAB_n$ to the set $\aA_n$ of arc permutations. We will verify
these properties when $\wAB_n$ is the set of $w \in \AB_n$ such
that $w(i) = 1$ for some $i \in [n]$. For the first property, we
observe that $\wAB_n$ is the disjoint union of the sets $\wAB_{n,
k}$, consisting of the elements $w \in \AB_n$ for which $(w(1),
w(2),\dots,w(n))$ is a permutation of
$\{\overline{k+1},\dots,\bar{n}, 1,\dots,k\}$, for $1 \le k \le n$
and that there is a natural descent preserving bijection from each
$\wAB_{n, k}$ to $\rR_n$.

For the second property, we note that $\bar{\aA}_n := \AB_n \sm
\wAB_n$ is the set of $w \in \AB_n$ for which $(w(1),
w(2),\dots,w(n))$ is a permutation of $\{k+1,\dots,n,
\bar{1},\dots,\bar{k}\}$ for some $k \in [n]$ and consider the map
$f: \bar{\aA}_n \to \aA_n$ which simply forgets the bars, meaning
that $f(w) = (|w(1)|, |w(2)|,\dots,|w(n)|)$ for $w \in
\bar{\aA}_n$. This map is clearly descent preserving. We leave
to the reader to verify that the preimages of a given arc
permutation $(a_1, a_2,\dots,a_n) \in \aA_n$ under $f$ are
\begin{itemize}
\item $(\bar{1}, a_2,\dots,a_n)$ and $(\bar{1},
\bar{a}_2,\dots,\bar{a}_n)$, if $a_1 = 1$, and \item $(a_1,
b_2,\dots,b_n)$ and $(\bar{a}_1, b_2,\dots,b_n)$ otherwise, where
$b_i = a_i$ if $a_i > a_1$ and $b_i = \bar{a}_i$ if $a_i < a_1$,
for $i \ge 2$.
\end{itemize}
This shows that the map $f$ is also two-to-one.
\end{proof}

\begin{remark} \label{rem:B-arc} \rm
Using the reasoning in Remark~\ref{rem:coinv}, it can be verified
that $\AB_n$ is not a fine set for any $B_n$-character for $n \ge
3$.
\end{remark}

Even though $\aA^s_n$ is not a union of Knuth classes of type $B$,
the Robinson--Schensted correspondence of type $B$ will be used in
the proof of Theorem~\ref{thm:sign-arc}. We recall from
Section~\ref{subsec:perm} that we think of signed permutations $w
\in B_n$ as written in the form $(w(1), w(2),\dots,w(n))$.

\begin{proof}[Proof of Theorem~\ref{thm:sign-arc}] The proposed
equation may be rewritten as
\begin{eqnarray*}
  \sum\limits_{w \in \aA^s_n} F_{\cDes(w)} (x, y) &=& s_{(n)}(x) \
  + \ s_{(1^n)}(y) \ + \ \sum\limits_{k=1}^{n-1} s_{(k, 1)} (x)
  s_{(1^{n-k-1})} (y) \ + \\
  & & \sum\limits_{k=1}^{n-1} s_{(k-1)} (x) s_{(2, 1^{n-k-1})} (y)
  \ + \ 2 \sum\limits_{k=1}^{n-1} s_{(k)} (x) s_{(1^{n-k})} (y).
\end{eqnarray*}

By Corollary~\ref{cor:main} (or Proposition~\ref{prop:schurprod}),
it suffices to show that the distribution of $\cDes$ over
$\aA^s_n$ is equal to its distribution over all standard Young
bitableaux of the following multiset of shapes:
\begin{enumerate}[(i)]
\item shapes $((n),\varnothing)$ and $(\varnothing, (1^n))$
with multiplicity one; \item all shapes $((k,1),
(1^{n-k-1}))$ and $((n-k-1),(2,1^{k-1}))$, for $1\le k\le n-1$,
with multiplicity one; and \item all shapes
$((k),(1^{n-k}))$, for $1\le k \le n-1$, with multiplicity two.
\end{enumerate}

Given our discussion of the Robinson--Schensted correspondence for
$B_n$ in Section~\ref{sec:Knuth} (and, in particular, since the
map $Q^B(\cdot)$ preserves the signed descent set), it suffices to
show that the restriction of $Q^B(\cdot)$ to $\aA^s_n$ induces a
two-to-one map from a subset $\widehat\aA^s_n$ of $\aA^s_n$ to the
set of standard Young bitableaux of shapes of type (iii) and a
bijection from $\aA^s_n \sm \widehat\aA^s_n$ to the set of
standard Young bitableaux of shapes of types (i) and (ii).

Let $e \in B_n$ be the identity permutation and $w_\circ \in B_n$
be defined by $w_\circ(i) = \overline{n-i+1}$ for $i \in [n]$. We
note that $\aA^s_n$ is invariant under right multiplication by
$w_\circ$. For $k \in [n-1]$, we denote by $\aA_{n, k}$ the set
consisting of all signed permutations which are shuffles of the
sequences $(\bar{k}, \overline{k-1},\dots,\bar{1})$ and
$(k+1,\dots,n)$. Then $\aA_{n, k}\subseteq \aA^s_n$ and $\aA_{n,
k} w_\circ$ consists of all shuffles of
$(\bar{n},\overline{n-1},\dots,\overline{k+1})$ with $(1,
2,\dots,k)$. Clearly, $\aA_{n, k}$ and $\aA_{n, k}w_\circ$ are
disjoint for $k \in [n-1]$ and $\aA_{n, i} \sqcup \aA_{n,
i}w_\circ$ and $\aA_{n, j} \sqcup \aA_{n, j}w_\circ$ are disjoint
for $i \ne j$. We choose $\widehat\aA^s_n$ to be the disjoint
union
\[ \widehat\aA^s_n \ := \ \bigsqcup_{k=1}^{n-1} \, (\aA_{n, k} \sqcup \aA_{n, k} w_\circ). \]
For $w \in \aA_{n, k}$, we note that $Q^B(w)$ is the standard
Young bitableau of shape $((n-k),(1^k))$ whose entries in $(1^k)$
are the positions of the barred letters in $w$. As a result,
$Q^B(\cdot)$ induces a bijection from $\aA_{n, k}$ to
$\SYT((n-k),(1^k))$. Similarly, $Q^B(\cdot)$ induces a bijection
from $\aA_{n, k} w_\circ$ to $\SYT((k),(1^{n-k}))$. Thus,
$Q^B(\cdot)$ induces a two-to-one map from $\widehat\aA^s_n$ to
the set of standard Young bitableaux of shapes of type (iii).

Finally, we need to show that $Q^B(\cdot)$ induces a bijection
from $\aA^s_n \sm \widehat\aA^s_n$ to the set of all standard
Young bitableaux of shapes of types (i) and (ii). Let us denote by
$\cC_{n, k}$ the set of all shuffles of $(n-k+1,\dots,n,1)$ with
$(\overline{n-k},\overline{n-k-1},\dots,\bar{2})$ and note that
$\cC_{n, k}$ is a subset of $\aA^s_n$. Denoting by $c$ the
positive cycle $ (1 \ 2 \ \cdots \ n)(\bar{1} \ \bar{2} \ \cdots \
\bar{n})$ and letting
\[ \bB_{n, k} \ := \ \bigsqcup_{r=0}^{k-1} c^r \cC_{n, k}, \]
we may express $\aA^s_n \sm \widehat\aA^s_n$ as a disjoint union
\[ \aA^s_n \sm \widehat \aA^s_n \ = \ \{e, w\} \, \sqcup \,
\bigsqcup_{k=1}^{n-1} \bB_{n, k} \, \sqcup \,
\bigsqcup_{k=1}^{n-1} \bB_{n, k} w_\circ. \]
Clearly, $Q^B(e)$ is the only standard Young bitableau of shape
$((n), \varnothing)$ and $Q^B(w_\circ)$ is the only standard Young
bitableau of shape $(\varnothing, (1^n))$. We leave to the reader
to verify that $Q^B(\cdot)$ induces bijective maps from $\bB_{n,
k}$ and $\bB_{n, k}w_\circ$ to the set of standard Young
bitableaux of shape $((k,1), (1^{n-k-1}))$ and $((n-k,1), (2,
1^{k-1}))$, respectively. This completes the proof.
\end{proof}

\section{Remarks}
\label{sec:rem}

Character formulas for the hyperoctahedral group were studied in
this paper. A generalization to the corresponding Iwahori-Hecke
algebra would be most desirable.

\begin{problem}
Find an appropriate $q$-analogue of Definition~\ref{def:fineB}
which will provide character formulas for the Iwahori--Hecke
algebra of type $B$.
\end{problem}

Most of the results in this paper may be naturally generalized to
the wreath product group $G(r,n) := \ZZ_r\wr S_n$ of $r$-colored
permutations, where $r$ is an arbitrary positive integer (this
paper has focused on the special case of $G(2,n)=B_n$). For
example, the $B_n$-analogue of the fundamental quasisymmetric
functions, considered here, was introduced and studied in the
general setting of $r$-colored permutations by
Poirier~\cite{Poi98}. However, it should be noted that generalizations to $G(r,n)$ are not always straightforward. The
following problem is challenging.

\begin{problem}
Generalize the concept of signed arc permutations and
Theorem~\ref{thm:sign-arc} to the groups $G(r,n)$.
\end{problem}

The situation is more complicated for general complex reflection
groups.

\begin{problem}
Generalize the setting and results of this paper to the classical
complex reflection groups $G(r,p,n)$.
\end{problem}

For example, the generalization of Theorems~\ref{thm:krootsA}
and~\ref{thm:krootsB} is related to the problem of characterizing
the finite groups $G$ for which the function $r_k: G \rightarrow
\NN$, assigning to $w \in G$ the number of $k$-roots of $w$, is a
non-virtual character; see, for instance, \cite[Chapter~4]{Isa76} \cite{Sch91b}.

The group of even signed permutations $D_n = G(2,2,n)$ is of
special interest.

\medskip

Another possible direction is to extend the setting of this paper
to other (not necessarily classical or finite) Coxeter groups.

\begin{problem}
Generalize the concept of fine set to arbitrary Coxeter groups.
\end{problem}

This objective may require a different approach. For example, we
believe that the results in Section~\ref{sec:flag-inv} may be
generalized to classical complex reflection groups. This hope is
based on the current good understanding of the flag-major index
and its role in the combinatorics of the coinvariant and diagonal
invariant algebras of classical complex reflection groups; see,
for instance,~\cite{Ca11}. Unfortunately, exceptional Weyl groups
are still mysterious and the problem of finding a ``correct"
flag-major index on these groups is widely open.

\section*{Acknowledgements} S.E. was partially supported by Simons 
Foundation grant \#280575 and NSA grant H98230-14-1-0125.

\end{document}